\theoremstyle{definition}
\newtheorem{theorem}{Theorem}
\newtheorem{lemma}{Lemma}
\newtheorem{cor}{Corollary}
\newtheorem{conjecture}{Conjecture}
\newtheorem{definition}{Definition}
\newtheorem{prop}{Proposition}
\newcommand{\Calg}{\mathcal{C}_{\text{alg}}}
\newcommand{\Cab}{\mathcal{C}_{\text{rad}}}
\newtheorem{example}{Example}
\newtheorem{remark}{Remark}
\DeclareMathOperator{\botn}{gw}
\DeclareMathOperator{\Deck}{Deck}
\DeclareMathOperator{\abot}{bot_{\text{rad}}}
\newcommand{\ZZ}{\mathbb{Z}}
\newcommand{\QQ}{\mathbb{Q}}
\newcommand{\VV}{\mathbb{V}}
\newcommand{\PP}{\mathbb{P}}
\newcommand{\CC}{\mathbb{C}}
\newcommand{\RR}{\mathbb{R}}
\newcommand{\QQalg}{\overline{\QQ}}
\newcommand{\FF}{\mathbb{F}}
\newcommand{\KK}{\mathbb{K}}
\newcommand{\LL}{\mathbb{L}}
\DeclareMathOperator{\Gal}{Gal}
\DeclareMathOperator{\ED}{ED}
\DeclareMathOperator{\ML}{ML}
\Crefname{conjecture}{Conjecture}{Conjectures}
\newcommand{\gr}[1]{\textcolor{black}{#1}}
\newcommand{\bl}[1]{\textcolor{black}{#1}}
\newcommand{\gl}[1]{\textcolor{black}{#1}}
\newcommand{\address}[1]{\gdef\@address{#1}}
\newcommand{\email}[1]{\gdef\@email{\url{#1}}}
\newcommand{\@endstuff}{\par\vspace{\baselineskip}\noindent\small
\begin{tabular}{@{}l}\scshape\@address\\\textit{E-mail address:} \@email\end{tabular}}
\title{
A Galois-Theoretic Complexity Measure for Solving Systems of Algebraic Equations}
\author{Timothy Duff}
\address{Department of Mathematics, University of Missouri, Columbia MO, 65211}
 \email{tduff@missouri.edu}
\begin{document}

\maketitle

\abstract{
Motivated by applications of algebraic geometry, we introduce the \emph{Galois width}, a quantity characterizing the complexity of solving algebraic equations in a restricted model of computation allowing only field arithmetic and adjoining polynomial roots.
We explain why practical heuristics such as monodromy give (at least) lower bounds on this quantity, and discuss problems in geometry, optimization, statistics, and computer vision for which knowledge of the Galois width either leads to improvements over standard solution techniques or rules out this possibility entirely.
}

\tableofcontents

\section{Introduction}\label{sec:intro}

Galois theory, since its inception, has provided a powerful toolkit for answering questions of computability and complexity in algebra.
Building on the classical impossibility theorems, more recent works span a diverse array of applications, including complex dynamics~\cite{mcmullen}, computer vision~\cite{hartley}, enumerative geometry~\cite{farb}, graph drawing~\cite{bannister}, and optimization~\cite{bajaj}.

In this paper, our main motivation is the increasingly important role of
Galois theory in applications of algebraic geometry---especially for problems where the goal is to compute solutions of a system of algebraic equations.
In practice, these equations are multivariate polynomials or rational functions with coefficients in $\QQ,$ and the number of complex solutions (the \emph{degree} of the system) has often been proposed as a measure of a problem's complexity.
\Cref{sec:examples} recalls such problems in detail.

The purpose of this paper is to introduce a Galois-theoretic refinement of the degree that better reflects the practical complexity of solving.
In contrast to existing theories, which focus on upper bounds for the complexity of solving, our framework naturally facilitates complexity \emph{lower bounds} on the complexity of solving in a simplified, yet plausible, algebraic model of computation.
At the same time, for some highly-structured problems, the Galois width is smaller than \emph{a priori} upper bounds suggest, which can be exploited to develop more efficient algorithms.

\Cref{sec:bottleneck} contains the definitions and results at the core of this work. We define the \emph{Galois width} $\botn (\alpha)$ of an algebraic number $\alpha \in \QQalg$, a minimax measure of how much a solution costs in our computational model.
A similar cost $\botn (G)$ may be defined for any finite group $G$.
\Cref{thm:complexity} states that the Galois width of $\alpha $ and the Galois group of its minimal polynomial coincide.
The latter can be computed explicitly using composition series and minimal permutation degrees of simple groups (\Cref{thm:compute-bot}.)
 
Before turning to illustrate our theory on examples, we first describe in~\Cref{subsec:br-thin} a general setup to which they all adhere.
We propose to study $\botn (\bullet)$ for concrete systems of equations via Galois group computation, including heuristics based in both exact computation (eg.~Frobenius elements) and 
inexact numerical methods (eg.~monodromy.)
For the latter,~\Cref{cor:hit} uses Hilbert's irreducibility theorem~\cite[Ch.~3]{serre} to precisely state the following principle: \emph{the Galois/monodromy group of a parametrized system of algebraic equations provides a lower bound on the Galois width of its solutions for a dense subset of rational parameters.}

We note two limitations of the aforementioned Galois group heuristics: (1) they might only identify a subgroup of the full Galois group, and (2) the numerical methods are not certified.
Nevertheless, heuristics offer insight into examples that might otherwise be inaccessible. 
On the other hand, as long as our goal is to obtain lower bounds on $\botn (\bullet )$, then its monotonicity (\Cref{thm:monotonicity}) suggests that Limitation (1) need not be a concern in practice.
Limitation (2) can also be addressed with rigorous, certified numerical path-tracking---see eg.~\cite{guillemot,lee-homotopy} and references therein.

Our work is close in spirit to recent studies of decomposable branched covers, and various proposals for ``optimal" numerical homotopy continuation methods.
\Cref{subsec:optimal} reviews this related work in the context of Galois width.
We must also mention the work~\cite{hartley}, discussed further in~\Cref{subsec:avis}, which inspired our computational model.
To the best of our knowledge, Galois width has not been explicitly studied in the literature.

The paper concludes by discussing three topics where Galois width has, at least in hindsight, already played an important role: metric algebraic geometry (\Cref{subsec:mag}), algebraic statistics (\Cref{subsec:astat}), and algebraic vision (\Cref{subsec:avis}.)
We hope the perspective and tools developed here will aid readers pursuing their own applications.

\section{Galois Width}\label{sec:bottleneck}

\subsection{Preliminaries}\label{subsec:prelims}

We first establish notation and state (mostly standard) results that will play a key role in what follows.
We denote by $\QQalg \subset \CC $ the usual field of algebraic numbers.
An intermediate field $\QQ \subset \KK \subset \QQalg $ is called a number field if $\KK / \QQ $ is a finite extension.
Many of our results will remain intact after replacing $\QQalg / \QQ $ with more a general field extension.
However, this level of generality is not necessary for the applications given later.

In our setting, the compositum $\KK \LL$ of two number fields $\KK$ and $\LL$ is the intersection of all subfields of $\QQalg $ containing both $\KK $ and $\LL$.
Any element of $\KK \LL $ may be written as an $\LL $-linear combination of elements in $\KK .$

\begin{prop}\label{prop:field-index-composite-bound}
Let $\FF, \KK , \LL $ be number fields with $\FF \subset \KK.$ Then 
\[
[ \KK : \FF ] \ge [\KK \LL : \FF \LL ].
\]
\end{prop}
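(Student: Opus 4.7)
The plan is to bound the dimension $[\KK\LL : \FF\LL]$ by exhibiting an explicit spanning set of size $[\KK : \FF]$ over $\FF\LL$, using the description of composita highlighted in the paragraph preceding the proposition.

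First I would fix an $\FF$-basis $e_1, \ldots, e_n$ of $\KK$, where $n = [\KK : \FF]$ is finite by assumption that $\KK$ is a number field containing $\FF$. My goal is then to show that the same tuple $(e_1, \ldots, e_n)$, viewed now inside $\KK\LL$, spans $\KK\LL$ as an $\FF\LL$-vector space. Granted this, a spanning set of size $n$ forces $[\KK\LL : \FF\LL] \le n$, which is exactly the desired inequality.

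To verify the spanning claim, I would take an arbitrary $x \in \KK\LL$ and apply the fact recalled just before the proposition: $x$ can be written as $x = \sum_i \ell_i k_i$ with $\ell_i \in \LL$ and $k_i \in \KK$. Each $k_i$ expands as $k_i = \sum_j f_{ij} e_j$ with $f_{ij} \in \FF$, so that
\[
x = \sum_j \Bigl( \sum_i \ell_i f_{ij} \Bigr) e_j.
\]
Since $\LL \cdot \FF \subseteq \FF\LL$, each coefficient $\sum_i \ell_i f_{ij}$ lies in $\FF\LL$, completing the argument.

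I do not anticipate a real obstacle here: the only subtle point is keeping straight that the $e_j$ are not required to be $\FF\LL$-linearly independent (they may well become dependent after extending scalars), only that they span. The inequality is therefore genuine and, as a sanity check, can become strict precisely when the extensions $\KK/\FF$ and $\FF\LL/\FF$ are not linearly disjoint over $\FF$.
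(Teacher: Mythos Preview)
Your proof is correct and follows essentially the same approach as the paper: exhibit an $\FF$-basis of $\KK$ and argue that it spans $\KK\LL$ over $\FF\LL$. The only cosmetic difference is that the paper chooses the power basis $1,\alpha,\ldots,\alpha^{n-1}$ coming from a primitive element, whereas you work with an arbitrary basis and spell out the spanning argument in slightly more detail; neither choice affects the substance.
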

\begin{proof}
Let $\alpha $ be a primitive element for $\KK / \FF $, so that the powers $\alpha^i,$ for $i=0,\ldots , [\KK : \FF ]-1,$ form a $\FF $-basis for $\KK .$
Promoting these powers under the inclusion map $\KK \hookrightarrow \KK \LL$ produces a spanning set for $\KK \LL $ over $\FF \LL .$
\end{proof}

We now state a useful group-theoretic fact.
The argument relates to the notion of a stabilizer chain (see eg.~\cite[\S 1.13]{cameron}.) 

\begin{lemma}\label{lem:stabchain-perm}
Let $G\le S_d$ be a transitive permutation group.
Then there exists a chain of subgroups 
\[
G = H_0 \ge  H_1 \ge H_2 \ge \ldots \ge H_d = \operatorname{id} 
\]
such that $d = [H_0 : H_1] > [H_i:H_{i+1}] $ for all $i=1,\ldots , d-1.$
\end{lemma}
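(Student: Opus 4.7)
The plan is to build $H_i$ as the pointwise stabilizer chain associated to the natural base $(1,2,\ldots,d)$ for the permutation action of $G$ on $\{1,\ldots,d\}$. Explicitly, set $H_0 = G$ and, for $i=1,\ldots,d$, define
\[
H_i = \{ g \in G \mid g(j) = j \text{ for every } j = 1, \ldots, i \},
\]
so that each $H_{i+1}$ arises as the stabilizer of the point $i+1$ inside $H_i$. This is clearly a descending chain, and it terminates at the trivial group because any element of $H_d$ fixes all of $\{1,\ldots,d\}$ and hence is the identity permutation in $S_d$.

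The three index requirements then follow from orbit--stabilizer. For the first index, transitivity of $G$ on $\{1,\ldots,d\}$ gives $|G \cdot 1| = d$, so $[H_0 : H_1] = d$. For the later indices, the key observation is that any element of $H_i$ already fixes $1,\ldots,i$, so the orbit $H_i \cdot (i+1)$ must be contained in the $(d-i)$-element set $\{i+1,\ldots,d\}$. Applying orbit--stabilizer a second time yields
\[
[H_i : H_{i+1}] \;=\; |H_i \cdot (i+1)| \;\le\; d - i \;\le\; d - 1 \;<\; d
\]
for every $i = 1,\ldots, d-1$, which is precisely the desired strict inequality $[H_0:H_1] > [H_i:H_{i+1}]$.

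There is no real obstacle here; the construction is the standard base-and-stabilizer-chain from computational group theory, and the only content is the orbit-containment bound that caps downstream indices well below $d$. One minor thing to note is that the lemma only requires non-strict containments $\ge$ between consecutive terms, which lines up with our chain: equality $H_i = H_{i+1}$ is permitted and simply corresponds to the orbit $H_i \cdot (i+1)$ being a singleton.
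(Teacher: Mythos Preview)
Your proof is correct and uses the same stabilizer-chain construction as the paper: the $H_i$ you define are exactly the paper's $H_i = W_i \cap G$, where $W_i$ is the pointwise stabilizer of $\{1,\ldots,i\}$ in $S_d$. Your bound $[H_i:H_{i+1}] = |H_i\cdot(i{+}1)| \le d-i$ via orbit--stabilizer is in fact more direct than the paper's version, which reaches the same inequality by comparing indices against the ambient chain in $S_d$ through the product formula $|W_i\cap G| = |W_i||G|/|W_iG|$.
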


\begin{proof}
Set $W_0 = S_d$, and for $i=1 \ldots , d,$ define subgroups
\begin{align}
W_i &= \displaystyle\bigcap_{j=1}^i \operatorname{Stab}_{S_d} (i) \le S_d,\\
H_i &= W_i \cap G \le G. 
\end{align}
We claim that the chain 
\begin{equation}\label{eq:H-chain-simple}
G = H_0 \ge H_1 \ge \ldots \ge 
H_d = \operatorname{id}
\end{equation}
satisfies the desired conclusion.
First note $[H_0 : H_1]=d$ by the orbit-stabilizer theorem. 
Then, comparing the chain in $G$ against the corresponding chain in $S_d,$ namely
\begin{equation}\label{ew:W-chain-simple}
S_d = W_0 \ge W_1 \ge \ldots 
\ge W_d = \operatorname{id} ,
\end{equation}
we bound the remaining indices for $i=1, \ldots , d$:
\begin{align*}
[H_i : H_{i+1}] = \displaystyle\frac{|W_i \cap G|}{|W_{i+1} \cap G|} 
&= \displaystyle\frac{|W_{i+1} G|}{|W_i G|} \cdot 
\displaystyle\frac{|W_i||G|}{|W_{i+1}| |G|}\\
&= \displaystyle\frac{|W_{i+1} G|}{|W_i G|} \cdot [W_i : W_{i+1} ]\\
&\le [W_i : W_{i+1} ] = \displaystyle\frac{(d-i)!}{(d-i-1)!} = d-i < d.
\end{align*}
% Our claim now follows by extending~\eqref{eq:H-chain-simple} to a maximal chain in $G$; note that
% \[
% [G : H_1] = [G:M] = m
% \]
% will also be an index in the extended chain (since $M$ is a maximal subgroup), and the remaining indices are at most $[H_i : H_{i+1}]<m.$
\end{proof}

\begin{cor}\label{cor:stabchain-simple}
Let $G$ be a finite simple group with a subgroup $H\subset G.$ 
Then there exists a chain of subgroups 
\[
G = H_0 \ge H = H_1 \ge H_2 \ge \ldots \ge H_d = \operatorname{id}
\]
such that $[H_0 : H_1]> [H_i:H_{i+1}]$ for all $i=1,\ldots , d-1.$
\end{cor}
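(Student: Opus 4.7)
The plan is to reduce the corollary to \Cref{lem:stabchain-perm} by viewing $G$ as a transitive permutation group via its coset action. I would first assume $H$ is a proper subgroup of $G$ (the conclusion forces $G = \operatorname{id}$ otherwise and is vacuous). Considering the action of $G$ on the coset space $G/H$ by left multiplication produces a homomorphism $\phi : G \to S_d$ with $d = [G : H]$. Since $\ker \phi$ is a normal subgroup of $G$ contained in $H \subsetneq G$, simplicity of $G$ forces $\ker \phi = \operatorname{id}$; hence $\phi$ embeds $G$ as a transitive subgroup of $S_d$.

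Next, I would label the cosets so that the identity coset $H$ corresponds to the point $1 \in \{1, \ldots, d\}$. With this labeling, the stabilizer of $1$ in $G$---that is, $W_1 \cap G$ in the notation of \Cref{lem:stabchain-perm}---coincides exactly with $H$. Applying \Cref{lem:stabchain-perm} to the transitive group $\phi(G) \le S_d$ then yields a chain $G = H_0 \ge H_1 \ge \ldots \ge H_d = \operatorname{id}$ with $[H_0 : H_1] = d$ and $[H_i : H_{i+1}] < d$ for $i = 1, \ldots, d-1$; by the labeling choice, $H_1 = H$, as required.

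I do not anticipate a substantive obstacle: the simplicity hypothesis is invoked only to guarantee faithfulness of the coset action, and the remaining work is absorbed by \Cref{lem:stabchain-perm}. The one minor bookkeeping point is choosing the coset labeling so that $H$ appears as $H_1$ (rather than as the stabilizer of some other point of $\{1,\ldots,d\}$); this is a cosmetic choice that does not affect the rest of the argument.
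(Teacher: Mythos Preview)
Your proposal is correct and follows essentially the same route as the paper: embed $G$ into $S_d$ via the faithful (by simplicity) coset action on $G/H$, identify $H$ with a point stabilizer, and invoke \Cref{lem:stabchain-perm}. The only difference is that you spell out the kernel argument and the labeling of the identity coset explicitly, whereas the paper leaves these implicit.
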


\begin{remark}\label{rem:corefree}
More generally, although we will not need it, the conclusion of~\Cref{cor:stabchain-simple} holds when $H$ is \emph{core-free} in $G$, meaning $\cap_{g\in g} g^{-1} H g = \operatorname{id}.$
\end{remark}

\begin{proof}
Set $d= [G:H_1],$ and note that $G$ acts transitively on left cosets of $H$, giving a group homomorphism $G \to S_d.$
Since $G$ is simple, we may identify it with its image under this homomorphism, and $H_1$ with the stabilizer of a point.
The conclusion now follows as in~\Cref{lem:stabchain-perm}.
\end{proof}

The interplay between abstract finite groups and their permutation representations is a reoccurring theme in this work.
In particular, we will need to collect some useful results on the well-studied topic of minimum faithful permutation degrees.

\begin{definition}\label{def:minimal-faithful-degree}
For any finite group $G,$ we define $\mu (G)$ to be the minimum degree of a faithful permutation representation of $G.$
That is, \[
\mu (G) = \min \left\{ d \in \ZZ_{\ge 0} \mid \exists \, \varphi \in \operatorname{Hom} (G, S_d) \text{ w/ } \ker \varphi = \textrm{id} \right\} .
\]
Such a representation is \emph{minimal} if its degree is $\mu (G).$
\end{definition}
Faithful permutation representations remain faithful when restricted to subgroups: thus, if $H\le G$ is any subgroup, we have
\begin{equation}\label{eq:mu-subgroup}
\mu (H) \le \mu (G).    
\end{equation}
One should bear in mind that a minimal faithful permutation representation $\varphi : G \to S_d$ need not give a transitive subgroup of $S_d.$
For example, if $G= \ZZ / 2 \ZZ \times \ZZ / 3 \ZZ,$ then $\mu (G) =5,$ and if $\varphi :G \to S_5$ is minimal then $\varphi (G)$ is generated by disjoint cycles of length $2$ and $3.$

Fortunately, when $G$ is simple, minimality implies transitivity.

\begin{prop}\label{prop:simple-perm-degree}
If $G$ is a finite simple group with $\mu (G) = d,$ 
then 
\[
\mu (G) = \displaystyle\min_{M < G \text{ maximal}} [G : M] = \displaystyle\min_{H < G } \, [G : H],
\]
and any minimal faithful representation of $G$ is transitive.
\end{prop}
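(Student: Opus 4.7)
The plan is to exploit the fact that, since $G$ is simple (hence nontrivial), any nontrivial homomorphism $\varphi \colon G \to S_d$ is automatically faithful: $\ker \varphi$ is normal in $G$, and cannot equal $G$ when $\varphi$ is nontrivial. This single observation does most of the work.

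First I would prove transitivity of a minimal faithful representation $\varphi \colon G \to S_d$ with $d = \mu(G)$. Decomposing $\{1,\ldots,d\}$ into $G$-orbits of sizes $d_1,\ldots,d_k$ yields transitive restricted actions $\varphi_i \colon G \to S_{d_i}$ with $\ker \varphi = \bigcap_i \ker \varphi_i$. By simplicity, each $\ker \varphi_i$ is trivial or equal to $G$; in the latter case $d_i = 1$, and deleting the fixed orbit would yield a faithful action of degree $d-1$, contradicting minimality of $d$. Thus every $\varphi_i$ is faithful, so $d_i \ge \mu(G) = d$, which together with $\sum d_i = d$ forces $k=1$.

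Next I would translate transitivity into the index formula. A transitive faithful $\varphi$ is equivalent to the coset action on $G/H$ for $H < G$ a point stabilizer, with $[G:H] = d$. To see $H$ must be maximal, suppose there were an intermediate $H \subsetneq H' \subsetneq G$: then the coset action on $G/H'$ is nontrivial and hence faithful by the key observation above, but has degree $[G:H'] < [G:H] = d$, contradicting minimality. This gives $\mu(G) \ge \min_{M < G \text{ maximal}} [G:M] \ge \min_{H < G} [G:H]$. Conversely, every proper $H < G$ yields a nontrivial (hence faithful) coset action, so $\mu(G) \le [G:H]$ for every such $H$, closing the chain of inequalities.

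The only real subtlety is the orbit-decomposition step toward transitivity, where simplicity and minimality must be combined carefully to rule out fixed-point orbits; the remaining steps are routine manipulations with coset actions and the standard correspondence between transitive $G$-sets and conjugacy classes of subgroups.
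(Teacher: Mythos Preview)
Your argument is correct and complete. The paper itself does not supply its own proof of this proposition: it simply refers the reader to Proposition~1(i) of the cited paper by Johnson for a short argument. Your self-contained proof---resting on the observation that any nontrivial homomorphism out of a simple group is automatically injective, then analyzing the orbit decomposition of a minimal faithful action and translating transitivity into the coset-action picture---is the standard one and is essentially what the cited reference does.
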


A short proof of~\Cref{prop:simple-perm-degree} is given in~\cite[Proposition 1 (i)]{johnson}.

In contrast to the case of subgroups, the behavior of $\mu (\bullet )$ for quotients is more subtle. This behavior was studied by Kov\'{a}cs and Praeger, who proved the following result.

\begin{theorem}\label{thm:perm-degree-quotient}
(\cite[Theorem 1]{praeger}) 
Let $H$ be a finite group with a normal subgroup $N\unlhd H.$
If $H/N$ has no nontrivial abelian normal subgroup (in particular, if $H/N$ is simple), then $\mu (H/N) \le \mu (H).$
\end{theorem}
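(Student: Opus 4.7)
The plan is to produce a faithful permutation representation of $H/N$ of degree at most $\mu (H)$ by ``folding'' a minimal faithful representation of $H$ along its $N$-orbits.

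First I would take a minimal faithful permutation representation $\rho : H \hookrightarrow \operatorname{Sym}(\Omega)$ with $|\Omega | = \mu (H)$, and decompose $\Omega = \Omega_1 \sqcup \cdots \sqcup \Omega_k$ into $H$-orbits with point stabilizers $H_1, \ldots , H_k$. Because $N \unlhd H$, the set $\bar\Omega = \Omega / N$ of $N$-orbits carries a well-defined action of $H/N$, and counting orbits gives
\[
|\bar\Omega | = \sum_{i=1}^k [H : H_i N] \le \sum_{i=1}^k [H : H_i] = \mu (H) ,
\]
using that $H_i \le H_i N$. If this action is faithful, we are done.

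Next, I would identify the kernel of the $H/N$-action on $\bar\Omega$: it is $M/N$, where $M := \bigcap_{i=1}^k \operatorname{Core}_H (H_i N)$ is a normal subgroup of $H$ containing $N$. The goal thus becomes $M = N$. Since $M/N$ is a normal subgroup of $H/N$, and the hypothesis rules out nontrivial abelian normal subgroups of $H/N$, it suffices to show that $M/N$ is abelian.

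This last claim is the technical heart of the Kov\'{a}cs--Praeger argument and is what I expect to be the main obstacle. The guiding observation is that each $m \in M$ acts ``locally as $N$'' on $\Omega$: since $M \subseteq \operatorname{Stab}_H (x) \cdot N$ for every $x \in \Omega$ by construction of the core, writing $m = h \cdot n$ with $h \in \operatorname{Stab}_H (x)$ and $n \in N$ yields $m \cdot x = (h n h^{-1}) \cdot x \in N \cdot x$. Comparing the analogous descriptions of two elements $m_1, m_2 \in M$ pointwise across $\Omega$---and exploiting that faithfulness of $\rho$ gives $\bigcap_{x \in \Omega} \operatorname{Stab}_H (x) = \operatorname{id}$---should force $[m_1, m_2] \in N$. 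The delicate step, as in the original proof, will be controlling how the pointwise $N$-valued descriptions of $m_1$ and $m_2$ across different orbits assemble into a globally defined commutator lying in $N$ rather than merely in $M$.
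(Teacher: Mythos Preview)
First, a framing remark: the paper does not supply a proof of this statement. It is quoted as a result of Kov\'{a}cs and Praeger and invoked later as a black box. So there is no in-paper argument to benchmark your proposal against; the sketch must be judged on its own.

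Your setup---pass to the $H/N$-action on $\bar\Omega = \Omega/N$, identify the kernel as $M/N$ with $M = \bigcap_i \operatorname{Core}_H(H_i N)$, and note $|\bar\Omega| \le \mu(H)$---is correct and is the natural first move. The genuine gap is the target of your final step. You aim to prove $[m_1,m_2] \in N$ for all $m_1, m_2 \in M$, i.e.\ that $M/N$ is abelian, and then invoke the hypothesis to force $M = N$. But $M/N$ need not be abelian. Take $H = S_4$ in its natural degree-$4$ action (this realises $\mu(S_4)=4$) and $N = V_4 = \{e,(12)(34),(13)(24),(14)(23)\}$. Then $N$ is transitive on $\Omega$, so $|\bar\Omega| = 1$, the induced $H/N$-action is trivial, and $M = H$; hence $M/N \cong S_3$ is non-abelian---for instance $[(123),(12)] = (123) \notin V_4$. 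Your heuristic in the last paragraph, that each $m$ acts on every point ``like some element of $N$'' and therefore commutators globally land in $N$, fails precisely because those local $N$-elements vary from point to point with no coherence.

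In this example $H/N \cong S_3$ has the abelian normal subgroup $A_3$, so the theorem's hypothesis is violated and there is no contradiction with the statement itself. But your proof structure treats ``$M/N$ is abelian'' as an unconditional lemma, to be established \emph{before} the hypothesis is invoked; the example shows that unconditional lemma is false. A correct argument along these lines must aim for something weaker that still suffices---for instance, showing that whenever $M \ne N$ one can locate inside $H/N$ a nontrivial abelian normal subgroup (not necessarily $M/N$ itself)---and this requires ideas beyond the orbit-space construction you have outlined.
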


\Cref{thm:perm-degree-quotient} is key to our proof of~\Cref{thm:monotonicity}.
Finally, we will use the following criterion recognizing when a transitive group is full-symmetric.

\begin{lemma}\label{lem:stabilizer-better}
Let $G \subset S_d$ act transitively on $\{1 , \ldots , d\}.$
If $S_k \times S_{d-k} \subset G$, where $1\le k<d/2,$ then $G=S_d$.
\end{lemma}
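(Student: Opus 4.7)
The natural embedding $H := S_k \times S_{d-k} \hookrightarrow S_d$ realizes $H$ as the setwise stabilizer of the subset $\{1, \ldots, k\}$, so its orbits on $\{1, \ldots, d\}$ are $\{1, \ldots, k\}$ and $\{k+1, \ldots, d\}$, of distinct sizes by the hypothesis $k < d/2$. In particular $H$ is intransitive, so the transitivity of $G$ forces the inclusion $H \subseteq G$ to be strict. My plan is to prove that $H$ is a \emph{maximal} subgroup of $S_d$, whence $G = S_d$ follows immediately.

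To establish maximality, I would fix an arbitrary intermediate subgroup $H \subsetneq K \subseteq S_d$ and apply a classical theorem of Jordan: every primitive subgroup of $S_d$ containing a transposition equals $S_d$. A transposition in $K$ is easy to exhibit: take any transposition in $S_k$ if $k \ge 2$, and otherwise use any transposition in $S_{d-k}$, which is valid since $k < d/2$ forces $d - k \ge 2$. The substantive step is to verify that $K$ is primitive.

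Suppose for contradiction that $B$ is a nontrivial block of $K$ with $1 \in B$. I would first show, by applying transpositions from $S_k$ and $S_{d-k}$ to $B$ and invoking that distinct blocks of an imprimitivity system are disjoint, that each intersection $B \cap \{1,\ldots,k\}$ and $B \cap \{k+1,\ldots,d\}$ has size $0$, $1$, or the full orbit size. A finite case analysis then narrows the possibilities to $B \in \{\{1,\ldots,k\},\, \{k+1,\ldots,d\}\}$. In either case, the resulting $K$-block system has all blocks of a fixed size $k$ or $d-k$, and restricting to the other $H$-orbit produces a nontrivial partition of $\{k+1,\ldots,d\}$ or $\{1,\ldots,k\}$ preserved by the full symmetric group on that orbit---which is impossible unless $k = d-k$, contradicting $k < d/2$. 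The strict inequality is essential: when $k = d/2$, the wreath product $S_k \wr S_2$ is a genuine intermediate subgroup and the conclusion fails. The main obstacle is the bookkeeping in this case analysis; the rest is a direct appeal to maximality and Jordan's theorem.
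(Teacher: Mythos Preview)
Your approach is essentially the same as the paper's: both deduce $G = S_d$ from the maximality of $H = S_k \times S_{d-k}$ in $S_d$, combined with the strict containment $H \subsetneq G$ forced by transitivity. The paper simply cites this maximality as a standard fact (Cameron, Theorem~4.8), whereas you supply a self-contained proof via Jordan's theorem and a block analysis; one small omission in your sketch is that you should first observe $K$ is transitive (any intransitive overgroup of $H$ has the same two orbits as $H$, hence equals $H$) before discussing primitivity.
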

\begin{proof}
Maximal subgroups of $S_d$ which are intransitive are precisely those of the form $S_k \times S_{d-k}$, where $1\le k< d/2$.
See eg.~\cite[Theorem 4.8]{cameron}.
\end{proof}
\begin{remark}\label{rem:symmetric-failure}
The conclusion of~\Cref{lem:stabilizer-better} fails when $n$ is even, $k=d/2,$ and $G = S_{k} \wr S_2$ is the \emph{wreath product} containing all permutations in $S_{d}$ that preserve the partition $\{1 ,\ldots , k \} \sqcup \{ k+1 , \ldots , d \}$.
\end{remark}

\subsection{Algebraic Computation Model}\label{subsec:model}

We now address the question of how to define the complexity of an algebraic number $\alpha \in \QQalg $.
To answer this question, we must define a model of computation and a class of admissible algorithms.
We begin with a slightly informal description.
Our algorithms will execute a finite number of steps $m,$ and maintain a \emph{working field} $\FF_k \subset \QQalg$ for each step $k=1, \ldots , m$.
We initialize $\FF_0\gets  \mathbb{Q}.$
At each step $k\ge 1,$ an algorithm can perform one of the following two ``oracle steps":
\begin{itemize}
\item[1.] Perform arithmetic in the field $\FF_{k-1}$ (and set $\FF_k \gets \FF_{k-1}$), or
\item[2.] Compute a root $\alpha_k \in \QQalg$ of some polynomial $p_k(x)\in \FF_{k-1} [x],$ 
and extend the working field $\FF_k \gets \FF_{k-1} (\alpha_k).
$
\end{itemize}
Finally, the output of the algorithm is given by $\alpha_m.$
We note that any arithmetic operation of type 1 is in fact a special case of the root-finding operation of type 2; indeed, for any operation $\textrm{op} \in \{ + , -, \times , \div \} ,$ the polynomial $x - (\alpha \textrm{ op } \beta) \in \FF_{k-1} [x]$ has a unique root given by $\alpha \textrm{ op } \beta .$

The next definition, although more abstract, captures all of the essential data needed to analyze the complexity of an algorithm in this model.
It is exactly the same as in~\cite[Definition 2.2]{hartley}.
\begin{definition}\label{def:algorithm}
Let $\alpha \in \QQalg$. An \emph{algorithm} is tower of field extensions
\begin{equation}\label{eq:tower}
\QQ = \FF_0 \subset \FF_1 \subset \FF_2 \subset \cdots \subset \FF_m \ni \alpha .
\end{equation}
\end{definition}

We will measure the cost of the $k$-th step in algorithm by the degree of the working field extension $[\FF_k : \FF_{k-1}]$.
We point out out that, in order to obtain an interesting theory, it is crucial to allow the working field to depend on the results of previous computations; if we were to work in the simpler setting where the working field is always the rational numbers, then the cost of an algebraic number would just be its degree over $\QQ .$

We now propose measuring the total cost of an algorithm in this model by its most costly step(s).
We would like, then, for any $\alpha \in \QQalg ,$ to understand which algorithms computing $\alpha $ minimize the most costly step.
This leads to the following definition.

\begin{definition}\label{def:bottleneck-number}
The \emph{Galois width} of $\gr{\alpha } \in \QQalg$ is the quantity
\[
\botn (\gr{\alpha} ) = 
\gl{\displaystyle\min_{\substack{\gr{\text{algorithms}}\\\gr{\QQ}  = \gr{\FF_0} \subset 
\cdots 
\subset 
\gr{\FF_m} \ni 
\gr{\alpha}}}}
\left( 
\gl{\max_{0\le i < m}} \, \, [\gr{\FF_{i+1}} : \gr{\FF_i}]
\right).
\]
\end{definition}
We now define a parallel notion for a finite group $G$.
We define the \emph{cost} of a chain of subgroups 
\[
G \ge H_0 \ge \ldots \ge H_n   
\]
to be the maximum value of the indices $[H_{i} : H_{i+1}]$ for $i=0,\ldots , n-1.$
To compute the cost of such a subgroup chain, we may assume without loss of generality that all inclusions are strict, $H_i > H_{i+1}.$
The \emph{length} of the chain~\eqref{eq:chain} is defined to be $n$, and we say this chain is \emph{maximal} if it cannot be refined to a longer chain by inserting strict subgroups.
\begin{definition}\label{def:bottleneck-group}
The \emph{Galois width} of a finite group $G$ is the minimum cost taken over all maximal chains of subgroups
\begin{equation}\label{eq:chain}
G = H_0 > \ldots > H_n = \operatorname{id}   .
\end{equation}
We denote this quantity by $\botn(G).$    
\end{definition}

We now relate~\Cref{def:bottleneck-number,def:bottleneck-group}.

\begin{theorem}\label{thm:complexity}
If $f\in \QQ [x]$ is an irreducible polynomial with Galois group $G,$ and $\alpha $ is any root of $f$, then
\[
\botn (\alpha) = \botn (G).
\]
\end{theorem}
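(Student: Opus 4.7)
The plan is to prove both inequalities $\botn(\alpha) \le \botn(G)$ and $\botn(G) \le \botn(\alpha)$ via the Galois correspondence; let $L \subset \QQalg$ be the splitting field of $f$, so that $\Gal(L/\QQ) = G$.

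For $\botn(\alpha) \le \botn(G)$, I would take a maximal chain $G = H_0 > H_1 > \cdots > H_n = \operatorname{id}$ achieving $\botn(G)$. Galois correspondence applied to $L/\QQ$ yields a tower $\QQ = L^{H_0} \subsetneq L^{H_1} \subsetneq \cdots \subsetneq L^{H_n} = L \ni \alpha$ with $[L^{H_{i+1}} : L^{H_i}] = [H_i : H_{i+1}]$. This is a valid algorithm, so $\botn(\alpha) \le \max_i [H_i : H_{i+1}] = \botn(G)$.

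The reverse inequality $\botn(G) \le \botn(\alpha)$ is more delicate. Fix an algorithm $\QQ = \FF_0 \subset \cdots \subset \FF_m \ni \alpha$ realizing $\botn(\alpha) = c$; after choosing primitive elements, write $\FF_j = \FF_{j-1}(\alpha_j)$ with $\alpha_j$ a root of $p_j \in \FF_{j-1}[x]$ of degree $\le c$. I then enlarge this to an algorithm whose final field is Galois over $\QQ$ while keeping every step of degree $\le c$: inductively let $M_j$ be the Galois closure of $\FF_j/\QQ$, so that $M_j/M_{j-1}$ is the splitting field of the minimal polynomial $q_j$ of $\alpha_j$ over $\QQ$. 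Over $M_{j-1}$, $q_j$ factors into a single $\Gal(M_{j-1}/\QQ)$-orbit of irreducibles of common degree, one of which divides $p_j$; hence all have degree $\le c$. Adjoining the roots of $q_j$ one at a time therefore extends the tower with every step of degree $\le c$, reaching a final field $M := M_m$ which is Galois over $\QQ$.

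Since $\alpha \in M$ and $M/\QQ$ is Galois, $L \subset M$. Set $\tilde G = \Gal(M/\QQ)$ and $N = \Gal(M/L) \trianglelefteq \tilde G$, so $G \cong \tilde G / N$. Galois correspondence converts the extended tower into a chain $\tilde G = \tilde H_0 \ge \cdots \ge \tilde H_N = \operatorname{id}$ with all indices $\le c$; projecting via the quotient map by setting $H_i := \tilde H_i N / N$ yields a chain $G = H_0 \ge \cdots \ge H_N = \operatorname{id}$ in $G$, and a direct index computation using $\tilde H_{i+1} \cap N \le \tilde H_i \cap N$ gives $[H_i : H_{i+1}] \le [\tilde H_i : \tilde H_{i+1}] \le c$. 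Refining to a maximal chain only decreases the maximum index (each refinement splits an index into two strictly smaller factors), so $\botn(G) \le c$. The main obstacle is the extension step: without first enlarging the algorithm to reach a Galois extension of $\QQ$, the chain extracted via Galois correspondence only reaches $\Gal(L/L \cap \FF_m) \le G$, which may be nontrivial, and bounding the cost of completing it to the identity inside $G$ appears difficult.
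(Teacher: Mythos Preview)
Your argument is correct. The easy direction $\botn(\alpha)\le\botn(G)$ matches the paper exactly. For the hard direction, both you and the paper face the same issue you identify at the end: an optimal algorithm need only reach some $\FF_m\ni\alpha$, and intersecting with the splitting field $L$ yields a chain in $G$ that stops at $\Gal(L/L\cap\FF_m)$ rather than at the identity. You and the paper resolve this by the same two ingredients---passing to a quotient by a normal subgroup, and adjoining Galois conjugates---but applied in the opposite order. You first \emph{extend} the algorithm to a tower reaching a Galois $M/\QQ$ (adjoining all $\QQ$-conjugates of each $\alpha_j$, controlling degrees via the transitive $\Gal(M_{j-1}/\QQ)$-action on the irreducible factors of $q_j$), and then \emph{project} the resulting full chain in $\tilde G=\Gal(M/\QQ)$ down to $G\cong\tilde G/N$. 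The paper instead first projects the original tower into $L$ by intersection (equivalently, pushes the chain in $G'=\Gal(\FF'/\QQ)$ through $G'\to G'/N'\cong G$), obtaining a partial chain $G=H_0\ge\cdots\ge H_m$, and then \emph{extends} this chain inside $G$ by forming composita $\widetilde{\FF}_m(g\cdot\widetilde{\FF}_i)$ with successive Galois conjugates, invoking \Cref{prop:field-index-composite-bound} to bound each new index. Your route avoids the iterated compositum construction and is arguably cleaner; the paper's route keeps all the work inside the target group $G$ and makes the role of \Cref{prop:field-index-composite-bound} explicit.
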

\begin{remark}
Although the Galois correspondence makes~\Cref{thm:complexity} plausible, some care is required.
Note that the minimum in~\Cref{def:bottleneck-group} is taken over a finite set, whereas the set of algorithms in~\Cref{def:bottleneck-number} is infinite. Moreover, the final field $\FF_m$ in an algorithm computing $\alpha $ need not be inclusion-wise comparable with the splitting field of $f.$
\end{remark}
\begin{proof}
Throughout, we let $\FF \subset \QQalg$ denote the Galois closure of the extension $\QQ (\alpha) / \QQ .$
First, we observe that $\botn(\alpha )\le \botn (G)$. 
Indeed, under the Galois correspondence, a maximal chain of subgroups of $G$ attaining the minimum cost $\botn (G)$ corresponds to a chain of fields
\begin{equation}\label{eq:tower-thm1-1}
\QQ = \FF_0 \subset \FF_1 \subset \cdots \subset \FF_m = \FF 
\end{equation}
such that $[\FF_i : \FF_{i-1}] \le \botn (G) $ for all $i.$ 

It remains to prove $\botn (\alpha ) \ge \botn (G).$ 
Consider any tower of fields 
\begin{equation}\label{eq:tower-F'}
\QQ = \FF_0' \subset \FF_1 ' \subset \cdots \subset \FF_m ' \ni \alpha 
\end{equation}
attaining the minimum cost $\botn (\alpha ).$
%where, for each $k=1,\ldots , m,$ we have $\FF_k = \FF_{k-1} (\alpha_k)$, with $p_k(x)\in \FF_{k-1}[x]$ the minimal polynomial of $\alpha_k$, and $\alpha_m = \alpha .$
%We must show that each index $[F_k : F_{k-1}]$ is at least $ \botn (G)$.
%We consider the extended tower
Let $\FF '$ denote the Galois closure the extension $\FF_m ' / \QQ $, with $G'$ its Galois group.
Since $\FF \subset \FF '
$, the Galois correspondence implies that $G\cong G'/N'$ for some normal subgroup of $N' \unlhd G'.$
Corresponding to~\eqref{eq:tower-F'} are both the subgroup chain in $G'$ given by
\begin{equation}\label{eq:chain-G'}
G' = H_0 ' \ge \cdots \ge H_m '
\end{equation}
as well as the subgroup chain in $G$ given by 
\begin{equation}\label{eq:chain-G}
G = H_0 \ge \cdots \ge H_m  ,
\end{equation}
where $H_i := N ' H_i ' / N'.$ 
The latter chain corresponds to the tower of fields 
\begin{equation}\label{eq:tower-F'-intersect}
\QQ = \widetilde{\FF}_0  \subset \widetilde{\FF}_1 \subset \cdots \subset \widetilde{\FF}_m  ,
\end{equation}
where $\widetilde{\FF}_i = \FF_i ' \cap \FF .$
Since $H_i \cong H_i' / (H_i' \cap N)$, we have 
\[
[H_i : H_{i+1} ] = \displaystyle\frac{[H_i ' : H_{i+1}']}{[H_i' \cap N' : H_{i+1}' \cap N']} \le [H_i ': H_{i+1}'] \le \botn (\alpha),
\]
and it will suffice to prove that the subgroup chain~\eqref{eq:chain-G} has cost at least $\botn (G).$
To do so, we extend the tower~\eqref{eq:tower-F'-intersect} as follows: let $\beta $ be a primitive element for the extension $\widetilde{\FF}_m / \QQ $, and $\gamma = g \cdot \beta$ one of its Galois conjugates.
We may then form the tower of composite extensions
\begin{equation}\label{eq:tower-F'-conjugate}
\QQ (\beta) = \widetilde{\FF}_m \subset \widetilde{\FF}_m \left( g\cdot \widetilde{\FF}_1 \right) \subset \cdots \subset \widetilde{\FF}_m \left( g \cdot \widetilde{\FF}_m \right) = \QQ (\beta, \gamma ).
\end{equation}
By~\Cref{prop:field-index-composite-bound}, we have
\[
\left[ \widetilde{\FF}_m \left( g\cdot \widetilde{\FF}_{i+1} \right) : \widetilde{\FF}_m \left( g\cdot \widetilde{\FF}_i \right)\right]
\le 
\left[g\cdot \widetilde{\FF}_{i+1} : g\cdot \widetilde{\FF}_i\right]
=
\left[\widetilde{\FF}_{i+1} : \widetilde{\FF}_i\right].
\]
If $\QQ (\beta , \gamma ) \subsetneq \FF ,$ we construct yet another chain as in~\eqref{eq:tower-F'-conjugate}, but starting from $\QQ (\beta , \gamma )$.
Continuing this process, we eventually obtain a tower of fields from $\QQ $ to $\FF,$ corresponding to an extension of the subgroup chain~\eqref{eq:chain-G},
\begin{equation}\label{eq:final-chain}
G = H_0 \ge \cdots \ge H_m \ge \cdots \ge H_l = \operatorname{id},
\end{equation}
with the property that $[H_i : H_{i+1}] \le \botn (\alpha)$ for all $i\ge m$.
Thus, the cost of both subgroup chains~\eqref{eq:chain-G} and~\eqref{eq:final-chain} is at most $\botn (\alpha ).$
Since the cost of~\eqref{eq:final-chain} is at least $\botn (G),$ this completes the proof.
\end{proof}

\subsection{Properties of Galois Width}\label{subsec:complexity}

Having now established the Galois width as the principal quantity of interest in our computation model, we turn to the problem of computing this quantity.
Since~\Cref{def:bottleneck-group} is essentially a minimax formulation, it would be natural to consider methods from combinatorial optimization---and indeed, a subgroup chain in $G$ attaining the minimum cost $\botn (G)$ may in principal be found using network flow on the lattice of subgroups.

However, the graph of all subgroups (or even conjugacy classes thereof) is much too cumbersome to work with in practice.
Thus, it is desirable to express $\botn (G)$ in terms of group-theoretic primitives.
\Cref{thm:compute-bot} expresses $\botn (G)$ in terms as the maximum over all minimal faithful permutation degrees of the composition factors of $G.$
We also use this section to prove ~\Cref{thm:monotonicity}, a monotonicity property for the Galois width.

We begin with a key splitting property.

\begin{prop}\label{prop:bottleneck-normal-recursion}
For any normal subgroup $N\unlhd G$, we have that
\begin{equation}\label{eq:bottleneck-normal-subgroup-formula}
\botn (G) = \max \left( \botn (G/N), \botn (N) \right).
\end{equation}
\end{prop}
\begin{proof}
First, we argue that the left-hand-side in~\eqref{eq:bottleneck-normal-subgroup-formula} is bounded by the right-hand-side.
To see this, note that the lattice isomorphism theorem identifies any maximal chain of subgroups in $G/N$ with a chain of subgroups $G > \ldots > N.$ 
Such a chain attaining the minimum cost $\botn (G/N)$ may be prepended to a maximal chain of subgroups of $N$ which attains the minimum cost $\botn (N)$. 
This produces a maximal chain in $G$ whose cost equals $\max \left( \botn (G/N), \botn (N) \right)$.
Thus $\botn (G) \le \max \left( \botn (G/N), \botn (N) \right).$
\\\\
We now prove the reverse inequality. 
Consider a maximal chain~\eqref{eq:chain} whose cost attains the minimum $\botn (G).$
We may ``reroute" this chain through $N$ using intersections and subgroup products as follows:
\[
G = N H_0 \ge \ldots  \ge N H_n = N = H_0 \cap N \ge \ldots  \ge H_n \cap N = \text{id}.
\]
The cost of this rerouted chain is at least $\max \left(\botn (G/N), \botn (N) \right).$
From the product formula
\begin{align*}
[H_{i} : H_{i+1}]=
[H_{i} \cap N  : H_{i+1} \cap N ] [N H_{i} : N H_{i+1}] ,
\end{align*}
and the bound $xy \ge \max (x,y)$, valid for any positive integers $x$ and $y,$
% \[
% \max \left( [H_{i+1} \cap N  : H_i \cap N ],  [N H_{i+1} : N H_i] \right)  \le [H_{i+1} \cap N  : H_i \cap N ] [N H_{i+1} : N H_i],
% \]
it follows that the rerouted chain also attains the minimum cost $\botn (G)$.
Thus $\max \left(\botn (G/N), \botn (N) \right) \le \botn (G).$

%PICTURE HERE?
\end{proof}

\Cref{prop:bottleneck-normal-recursion} shows that the Galois width is determined by its values on simple groups.
Moreover, when the group $G$ is simple, it turns out that $\botn (G)$ equals the minimum (faithful) permutation degree $\mu (G).$
This in turn implies a clean formula for the Galois width of any finite group $G$.
Recall that the \emph{Jordan-H\"{o}lder Theorem} states that $G$ always has a \emph{composition series}: that is, a chain
\begin{equation}\label{eq:composition-series}
G = N_0 \unrhd N_1 \cdots \unrhd N_n = \operatorname{id}
\end{equation}
such that all successive quotient groups $N_{i} / N_{i+1}$, called \emph{composition factors}, are simple.
The chain~\eqref{eq:composition-series} is \emph{subnormal}: ie., $N_j$ need not be normal in $N_i$ for $i < j-1$.
Multiple composition series for a group may exist; however, the multiset of composition factors is uniquely determined.

\begin{theorem}\label{thm:compute-bot}
If $G$ is any finite group with composition series~\eqref{eq:composition-series}, then
\begin{equation}\label{eq:bot-compositive}
\botn (G) = \displaystyle\max_{0\le i \le n-1} \mu (N_{i} / N_{i+1}).    
\end{equation}
\end{theorem}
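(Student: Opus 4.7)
The overall plan is to reduce the theorem to the case of a simple group via Proposition~\ref{prop:bottleneck-normal-recursion}, and then separately establish the identity $\botn(S) = \mu(S)$ for any nontrivial finite simple group $S$. The reduction proceeds by induction on the length $n$ of the composition series. Applying Proposition~\ref{prop:bottleneck-normal-recursion} to the normal subgroup $N_1 \unlhd G$, one obtains
\[
\botn(G) = \max\bigl(\botn(G/N_1),\, \botn(N_1)\bigr).
\]
Since $G/N_1 = N_0/N_1$ is simple by definition of a composition series, and since $N_1$ inherits the shorter composition series $N_1 \unrhd N_2 \unrhd \cdots \unrhd N_n = \operatorname{id}$, the inductive hypothesis identifies $\botn(N_1)$ with $\max_{1 \le i \le n-1} \mu(N_i/N_{i+1})$. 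Combining with the simple-group identity for $G/N_1$ then yields~\eqref{eq:bot-compositive}.

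For the simple-group identity itself, I would establish two inequalities. The upper bound $\botn(S) \le \mu(S)$ starts by picking, via Proposition~\ref{prop:simple-perm-degree}, a subgroup $H < S$ with $[S:H] = \mu(S)$; then Corollary~\ref{cor:stabchain-simple} supplies a descending chain $S = H_0 \ge H = H_1 \ge H_2 \ge \cdots \ge H_d = \operatorname{id}$ whose first index equals $\mu(S)$ and whose remaining indices are strictly smaller, so this chain has cost exactly $\mu(S)$. Refining to a maximal chain splits any composite index $[H_i:H_{i+1}] = ab$ (with $a,b \ge 2$) into two factors both strictly smaller than $ab$, so the maximum index---and hence the cost---cannot increase. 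This produces a maximal chain of cost at most $\mu(S)$. For the reverse inequality, given any maximal chain $S = H_0 > H_1 > \cdots > H_n = \operatorname{id}$, the coset action of $S$ on $S/H_1$ gives a homomorphism $S \to S_{[S:H_1]}$ whose kernel is a proper normal subgroup of $S$; by simplicity the kernel is trivial, so $[S:H_1] \ge \mu(S)$, and in particular the cost of the chain is at least $\mu(S)$.

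The main obstacle I anticipate is the upper-bound step for simple groups, because Corollary~\ref{cor:stabchain-simple} produces a chain of the correct cost but says nothing about maximality, while Definition~\ref{def:bottleneck-group} minimizes over \emph{maximal} chains. The resolution is the basic observation that refinement only replaces a single index by two strictly smaller factors, so cost is monotone under refinement; this is conceptually simple but essential, as it is what lets us feed the permutation-theoretic input from Corollary~\ref{cor:stabchain-simple} into the minimax definition of Galois width.
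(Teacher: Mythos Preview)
Your proposal is correct and follows essentially the same route as the paper: reduce to the simple case via Proposition~\ref{prop:bottleneck-normal-recursion}, then for a simple group use Corollary~\ref{cor:stabchain-simple} together with Proposition~\ref{prop:simple-perm-degree} to establish $\botn(S)=\mu(S)$. Your treatment is in fact slightly more careful than the paper's in one respect: you explicitly note that the chain produced by Corollary~\ref{cor:stabchain-simple} need not be maximal and that refinement cannot increase cost, a point the paper leaves implicit.
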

\begin{proof}
It is enough to prove the result when $G$ is simple---the general case then follows by repeated application of~\Cref{prop:bottleneck-normal-recursion}.
Let $M < G$ be a maximal subgroup.
Applying~\Cref{cor:stabchain-simple}, we have that
\begin{equation}\label{eq:maximal-bound}
\botn (G) \le [G : M].    
\end{equation}
%Having proven~\eqref{eq:maximal-bound} for any maximal subgroup $M$, we now observe that 
Moreover, the reverse inequality must hold for any maximal subgroup $M$ which appears in a chain attaining the minimum cost $\botn (G).$
Such an $M$ is clearly a subgroup of minimum index in $G$.
Thus, using~\Cref{prop:simple-perm-degree}, we conclude that $$\botn(G) = [G:M]=\mu (G).$$
\end{proof}
\Cref{thm:compute-bot} allows us to easily evaluate $\botn (G)$ for many familiar families of groups.
For example, if $G$ is solvable, then $\botn (G)$ equals the largest prime dividing the group order $|G|.$
For the symmetric and alternating groups, we have the composition series $S_n \unrhd A_n \unrhd \text{id}$.
Recall that, when $n\ge 5,$ the group $A_n$ is simple and has no subgroup of index less than $n.$ 
For $A_4,$ on the other hand, we have the subgroup chain
\[
A_4 \ge \mathbb{Z}/2\mathbb{Z} \times \mathbb{Z}/2\mathbb{Z} \ge \mathbb{Z}/2\mathbb{Z} \ge \text{id},
\]
which attains the minimum cost of 3. Altogether, this gives 
\begin{equation}\label{eq:bot-sn-an}
\botn (\gr{S_n} ) = \botn (\gr{A_n} ) = \begin{cases}
          \gr{3} \quad \text{if } \gr{n} = \gr{4}, \\
          \gr{n} \quad \text{else.}
      \end{cases}
\end{equation}
For a Galois-theoretic interpretation of formula~\eqref{eq:bot-sn-an}, recall that the roots of a general univariate quartic equation may be expressed in terms of the roots of its \emph{resolvent cubic} and additional square roots thereof. 
For larger values of $n,$ no such reduction is generally possible.

We now prove that $\botn (\bullet )$ is monotonic with respect to inclusion.
\begin{theorem}\label{thm:monotonicity}
\begin{equation}\label{eq:subgroup-bound}
\botn (H) \le \botn (G) \quad \text{ for any subgroup } H \le G.
\end{equation}    
\end{theorem}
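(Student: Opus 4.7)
The plan is to combine the composition-series formula from~\Cref{thm:compute-bot} with the two permutation-degree facts we already have in hand:~\eqref{eq:mu-subgroup} (restriction to subgroups) and~\Cref{thm:perm-degree-quotient} (quotients with no nontrivial abelian normal subgroup, applicable in particular to simple quotients). In a single sentence, the idea is: every composition factor of $H$ is isomorphic to a subquotient of some composition factor of $G$, and passing to subquotients can only decrease $\mu$.

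First, starting from any composition series
\[
G = N_0 \unrhd N_1 \unrhd \cdots \unrhd N_n = \operatorname{id}
\]
of $G$, I would intersect termwise with $H$ to form the subnormal chain
\[
H = H \cap N_0 \unrhd H \cap N_1 \unrhd \cdots \unrhd H \cap N_n = \operatorname{id}.
\]
By the second isomorphism theorem, each successive quotient satisfies
\[
(H \cap N_i)/(H \cap N_{i+1}) \;\cong\; (H \cap N_i) N_{i+1}/N_{i+1} \;\le\; N_i/N_{i+1},
\]
so each factor of this chain embeds into a composition factor $S_i := N_i/N_{i+1}$ of $G$.

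Next, I refine the chain above to an honest composition series of $H$. Every composition factor $K$ of $H$ then arises as a simple subquotient of some $(H \cap N_i)/(H \cap N_{i+1})$, and hence as a simple subquotient of $S_i$: that is, $K \cong L/M$ for some $M \unlhd L \le S_i$. Applying~\eqref{eq:mu-subgroup} to the inclusion $L \le S_i$ gives $\mu(L) \le \mu(S_i)$, and since $K = L/M$ is simple, \Cref{thm:perm-degree-quotient} yields $\mu(K) \le \mu(L)$. Combining these,
\[
\mu(K) \;\le\; \mu(S_i) \;\le\; \max_{0\le j \le n-1} \mu(S_j) \;=\; \botn(G),
\]
where the last equality is~\Cref{thm:compute-bot}. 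Taking the maximum over all composition factors $K$ of $H$ and again invoking~\Cref{thm:compute-bot} gives $\botn(H) \le \botn(G)$.

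The only place any real content is used is the passage from $\mu(L)$ to $\mu(L/M)$, which is precisely the nontrivial Kov\'acs--Praeger theorem; so I anticipate that step is the main obstacle in the sense that without~\Cref{thm:perm-degree-quotient} the proof would not go through—the rest is bookkeeping with the second isomorphism theorem and the composition-factor formula.
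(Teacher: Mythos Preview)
Your proof is correct, and it uses exactly the same three ingredients as the paper—\Cref{thm:compute-bot}, inequality~\eqref{eq:mu-subgroup}, and the Kov\'acs--Praeger theorem (\Cref{thm:perm-degree-quotient})—but organizes them quite differently. The paper argues by a double induction on $|G|$ and $|H|$: it first reduces to the case where $G$ is simple by passing to $G/N'$ for a nontrivial $N'\unlhd G$ (using \Cref{prop:bottleneck-normal-recursion} and the inductive hypothesis), and then in the simple case picks $N\unlhd H$ with $H/N$ simple and applies Kov\'acs--Praeger once to bound $\botn(H/N)$, with $\botn(N)$ handled by the inner induction. Your argument is instead direct and induction-free: you intersect a composition series of $G$ with $H$, observe via the second isomorphism theorem that each resulting factor embeds in a composition factor $S_i$ of $G$, refine to a composition series of $H$, and then bound each composition factor $K$ of $H$ by chaining~\eqref{eq:mu-subgroup} and \Cref{thm:perm-degree-quotient} through the subquotient $K\cong L/M$ with $L\le S_i$. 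Both approaches isolate Kov\'acs--Praeger as the sole nontrivial input; your version has the advantage of making the underlying combinatorial fact (``composition factors of subgroups are subquotients of composition factors'') completely explicit, while the paper's induction hides this behind the recursion. One small point: the parenthetical ``in particular, if $H/N$ is simple'' in \Cref{thm:perm-degree-quotient} is literally false when $H/N\cong\ZZ/p\ZZ$ (which is its own nontrivial abelian normal subgroup), but the inequality $\mu(\ZZ/p\ZZ)=p\le\mu(L)$ is immediate from Cauchy's theorem whenever $p$ divides $|L|$, so this case is harmless in both proofs.
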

\begin{proof}
% Suppose the claim is false, and let $H \le G $ be a counterexample to~\eqref{eq:subgroup-bound} with $G$ of minimum order. Suppose further that $H$ is of minimum order among all such counterexamples.
We induct twice: first on the order of $G$, then on the order $H.$
The base cases where $G$ or $H$ are trivial follow trivially.

Consider first the case where $H$ contains a nontrivial normal subgroup of $G$, say $N\unlhd G.$ Then $H / N \le G / N,$ and we have
\begin{align*}
\botn (H) &= \max \left( \botn (H/N), \botn (N) \right) \tag{by~\Cref{prop:bottleneck-normal-recursion}}\\
&\le \max \left(\botn (G/N), \botn (N) \right) \tag{by induction}\\
% &\le \max \left(\botn (G/N) , G \right) \tag{by minimality} \\
&= \botn (G). 
\end{align*}
More generally, suppose $N' \unlhd G$ is a nontrivial normal subgroup of $G,$ and set $N=N'\cap H.$ 
Either $ N\ne \operatorname{id}$, in which case the previous argument works, or $H\cong N'H/N'.$
The latter case yields
\[
\botn (H) = \botn (N'H/ N') \le \botn (N'H) \le \botn (G),
\]
where the last inequality follows from the case previously considered.

It remains to treat the case where $G$ is simple.
Taking now $N \unlhd H$ to be a proper normal subgroup with $H/N$ simple, we then have
\begin{align}
\botn (H/N) &= \mu (H/N) \tag{\Cref{thm:compute-bot}} \nonumber \\
&\le \mu (H) \tag{\Cref{thm:perm-degree-quotient}} \nonumber \\
&\le \mu (G) \tag{eq.~\eqref{eq:mu-subgroup}} \nonumber \\
&= \botn (G) \tag{\Cref{thm:compute-bot}} \nonumber.
\end{align}
By induction, $\botn (N) \le \botn (G).$ Thus, combining~\Cref{prop:bottleneck-normal-recursion} and the inequality above, we conclude that
\[
\botn (H) = \max (\botn (N), \botn(H/N)) \le \botn (G).
\]
\end{proof}

\section{Solving Algebraic Equations}\label{sec:applications}

\subsection{Branched Covers and Thin Sets}\label{subsec:br-thin}

In this section, we pass from univariate polynomials to multivariate systems of equations.
For a more detailed exposition of Galois groups arising in polynomial system solving, we recommend the recent survey~\cite{sottile2021galois}.

Consider an algebraic variety $X\subset \CC^m \times \CC^n$ whose points $(p,x)$ are \emph{problem-solution pairs.}
Solutions to a particular problem instance $p\in \CC^m,$ then, may be identified with points in the fiber $\pi^{-1} (p),$ where
\begin{equation}\label{eq:branched-cover}
\pi : X \to \CC^m, \quad 
(p, x) \mapsto p
\end{equation}
projects $X$ onto the space of problems.
We assume:
\begin{enumerate}
    \item[\textbf{A1}] For a generic problem instance $\gl{p}\in \gl{\CC^m},$ there exists a finite, nonzero \gr{number of solutions}, $\gr{d}=\# \pi^{-1} (p)$. More precisely, for all $p$ in some nonempty Zariski-open $U \subset \CC^m$ (depending on $\pi$), the set $\pi^{-1} (p)$ consists of $d\ge 1$ distinct points.
    \item[\textbf{A2}] The variety $\gr{X}$ is \emph{irreducible}.
\end{enumerate}
In other words, the map $\pi$ is a branched cover. More generally, we define a branched cover $\pi : X \dashrightarrow Z$ to be a dominant rational map between quasiprojective varieties of equal dimension over $\CC$.
The \emph{degree} of $\pi$, denoted $\deg (\pi ),$ is the number of points $\# \pi^{-1} (z)$ in a generic fiber.

Assumption \textbf{A1} essentially requires that problems are well-posed.
Assumption \textbf{A2}, although strong, is often satisfied in practice.
A common theme among the examples to be presented is that \gl{problems} may be written as rational functions of their \gr{solutions}, in which case $X$ would be the graph of a rational map $\CC^m \dashrightarrow \CC^m$.
Much of our ensuing discussion could be adapted to reducible varieties by working component-by-component.

Since $X$ is irreducible, we may consider a primitive element $f$ for the extension of function fields $\gr{\CC (X)} / \gl{\CC (p_1, \ldots , p_m)},$
\begin{equation}\label{eq:eliminant}
f(y; \gl{p} ) = \gr{y^d} + c_1 (\gl{p}) \gr{y^{d-1}} + \ldots + c_d (\gl{p}).
\end{equation}
The \emph{Galois/monodromy group} $\operatorname{Gal} (\pi)$ may be defined as the Galois group of the polynomial $f$ over $\CC (p_1, \ldots , p_m).$
Alternatively, one may define $\operatorname{Gal} (\pi)$ via the monodromy action of $\Gal (\pi)$ on a fiber $\pi^{-1}(p)$ of size $\deg (\pi ).$
This is a group homomorphism $\pi_1 (U ; p) \to S_d$, where $\pi_1 (\bullet )$ denotes the topological fundamental group.
The permutations in $\Gal (\pi)$ are obtained by lifting paths in $U$ based at $p$ up to $X.$
Numerical homotopy continuation methods that compute approximations of these lifts are the basis of popular heuristics for computing $\Gal (\pi )$---see eg.~\cite{DUFF-MONODROMY,del-campo-rodriguez,leykin-schubert,breiding-timme,jir-galois}.
Assumption \textbf{A2} implies that $\Gal (\pi )$ acts \emph{transitively} on $\pi^{-1} (p)$.

A third assumption often holding in practice is that the variety $X$ is defined by polynomials with rational coefficients:
\begin{itemize}
    \item[\textbf{A3}] The variety $X$ is defined over $\mathbb{Q}.$
\end{itemize}
In this case, a suitable $f$ may be computed (at least in principle) using Gr\"{o}bner bases~\cite{gianni}.
Let $I \subset R = \QQ (p) [x_1,\ldots , x_n]$ be the ideal obtained from the vanishing ideal of $X$ by extension of scalars.
This is a polynomial ideal of dimension $0$ and degree $d.$
For generic $(a_1,\ldots, a_n) \in \QQ^n,$ consider the linear form $y=a_1 x_1 + \ldots + a_n x_n.$ 
We endow the polynomial ring $R=\QQ (p) [x_1, \ldots , x_{n-1}, y]$ with the lexicographic order $x_1> \cdots > x_{n-1} > y.$
The elimination ideal $I \cap \QQ (p) [y] $ is then generated by $f$ of the form~\eqref{eq:eliminant}.

From our preceding discussion, it becomes natural to consider three types of Galois group: $\Gal (\pi),$ the Galois group of the univariate polynomial $f(y;p)\in \QQ (p) [y]$, and Galois groups of specializations $f(y;\tilde{p})$ for particular problem instances $\tilde{p}\in \QQ^m.$ 
When needed, we denote the latter two groups by $\Gal_{\QQ (p)} (f)$ and $\Gal_{\QQ} (f(y;\tilde{p})),$ respectively.
The theory developed in~\Cref{sec:bottleneck} applies most directly to the last of these types.
In general, the three types of groups may all be different.

\begin{example}\label{ex:a3}
Consider the branched cover $\pi : X \to \CC^1$ given by
\[
X = \{ (p, x) \in \CC^1 \times \CC^1 \mid f(x; p)=0 \} , \quad 
f(x;p) = x^3-p.
\]
We have $\Gal (\pi)\cong A_3$, yet the Galois group of $f\in \QQ (p)[x]$ is $S_3.$
Although $S_3$ is also typically the Galois group after specialization, this is not the case for certain unlucky specializations like $\tilde{p}=0$, in which case the Galois group is trivial.
This example shows that $\Gal (\pi )$ and the specialized Galois groups are generally not comparable inclusion-wise.
\end{example}

The notion of a thin set defined in~\cite[Ch.~3]{serre}
captures the essence of ``unlucky" specializations such as in the example above.
We say a $\Delta \subset \QQ^m$ has \emph{type C1} if it is contained in a proper Zariski-closed subset of $\CC^m$, and \emph{type C2} if there is a branched cover $\pi :X \to \CC^m$ with $\deg (\pi ) \ge 2$ and $\Delta \subset \pi (X_{\QQ})$ (here $X_{\QQ}$ denotes the set of $\QQ$-valued points in $X.$)

\begin{definition}\label{def:thin-set}
A set $\Delta \subset \QQ^m$ is said to be \emph{thin} if it is contained in a finite union of sets of types C1 or C2.
\end{definition}

\begin{example}\label{ex:thin}
Continuing with~\Cref{ex:a3}, we show that the set
\[
\Delta = \{ p \in \QQ \mid \Gal_\QQ ( x^3 - p  ) \ne S_3 \} .
\]
is thin.
If $\tilde{p}$ is chosen such that $x^3-\tilde{p}$ has a rational root, then the Galois group $\Gal_\QQ (x^3-\tilde{p})$ is contained in the subgroup $S_2$: we define
\[
X^{S_2} = \{ (p,a,b,c) \in \CC^4 \mid x^3 - p  = (x-a) (x^2+bx + c) \}.
\]
The condition that $p\in X_\QQ^{S_2}$ may be found by equating coefficients in $x$,
\[
p = ac = a^2 b = a^3.
\]
Furthermore, the map $\pi : X^{S_2} \to \CC^1$ defined by $(p,a,b,c) \mapsto p$ is a branched cover of degree $3.$
Thus, we see that $\pi (X^{S_2}_\QQ)\subset \QQ$, the set of rational perfect cubes, is thin.
One may also rule out $\Gal_\QQ (x^3-\tilde{p}) \subset A_3$ by computing the discriminant, $27p^2=0 \Rightarrow p=0.$
Since $A_3$ and $S_2$ are the maximal subgroups of $S_3,$ we conclude that $\Delta = \pi (X^{S_2}_\QQ)$ is thin.
\end{example}

\Cref{ex:thin} generalizes significantly.

\begin{theorem}\cite[Proposition 3.3.5]{serre}
For $f$ as in equation~\eqref{eq:eliminant}, there exists a thin set $\Delta \subset \QQ^m$ such that if $\tilde{p}\notin \Delta ,$ then $\tilde{p}$ is not a pole of any of the rational functions $c_1(p), \ldots , c_d(p),$ the polynomial $f(y;\tilde{p})\in \QQ [y]$ is irreducible over $\QQ ,$ and its Galois group equals that of $f$ over $\QQ (p).$\label{thm:hit}
\end{theorem}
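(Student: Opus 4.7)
The plan is to decompose $\Delta$ as the union of a ``geometric'' type C1 piece and a ``Galois-theoretic'' type C2 piece, the former handling poles and multiple roots, the latter handling the possible collapse of the Galois group under specialization.

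For the C1 part, let $D \subset \CC^m$ be the union of the pole loci of $c_1(p), \ldots, c_d(p)$ together with the discriminant hypersurface $\{\operatorname{disc}_y f(y;p) = 0\}$. Since $f$ is irreducible over $\CC(p)$ of degree $d$, the set $D$ is a proper Zariski closed subset of $\CC^m$, so $D \cap \QQ^m$ has type C1. For any $\tilde{p} \in \QQ^m \setminus D$, the specialized polynomial $f(y;\tilde{p}) \in \QQ[y]$ is defined and separable, and a standard argument on decomposition groups realizes $\Gal_\QQ(f(y;\tilde{p}))$ as a subgroup of $G := \Gal_{\QQ(p)}(f)$ via its action on the $d$ distinct roots.

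For the C2 part, let $L/\QQ(p)$ be the splitting field of $f$, and let $\{H_1, \ldots, H_r\}$ be a system of representatives for the conjugacy classes of maximal proper subgroups of $G$. For each $i$, choose a primitive element $\beta_i \in L^{H_i}$ for the fixed field over $\QQ(p)$; its minimal polynomial over $\QQ(p)$, after clearing denominators in $p$, defines an irreducible branched cover $\pi_i : X_i \to \CC^m$ of degree $[G:H_i] \geq 2$. Set
\[
\Delta := (D \cap \QQ^m) \;\cup\; \bigcup_{i=1}^r \pi_i\bigl((X_i)_\QQ\bigr).
\]
Each $\pi_i\bigl((X_i)_\QQ\bigr)$ is of type C2, and since there are only finitely many of them, $\Delta$ is thin. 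The claim is that this $\Delta$ works: if $\tilde{p} \in \QQ^m \setminus \Delta$ and $\Gal_\QQ(f(y;\tilde{p}))$ were a proper subgroup of $G$, then it would lie in some conjugate of some $H_i$, which, via the Galois correspondence applied to the specialization of $L$, forces $L^{H_i}$ to specialize to a rational value — i.e., $\pi_i^{-1}(\tilde{p})$ would contain a $\QQ$-point, contradicting $\tilde{p} \notin \pi_i((X_i)_\QQ)$. Transitivity of the full group $G$ on the roots of $f$ then yields irreducibility of $f(y;\tilde{p})$ over $\QQ$.

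The main obstacle is the specialization dictionary between subgroup containment on the Galois side and existence of $\QQ$-rational points on the intermediate cover side. Executing this rigorously requires choosing an integral model for $L$ over the local ring at $\tilde{p}$ so that reduction modulo the maximal ideal induces an injection of the specialized Galois group into a decomposition subgroup of $G$, and then identifying fixed elements under this subgroup with $\QQ$-rational points in $X_i$. This is essentially the content of Serre's Propositions 3.3.1--3.3.4, and I would cite those directly rather than reprove them. All other steps — finiteness of conjugacy classes of maximal subgroups, irreducibility of the $X_i$, and the thinness bookkeeping — are routine once this dictionary is in hand.
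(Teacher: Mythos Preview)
The paper does not supply its own proof of this theorem: it is stated as a citation of \cite[Proposition 3.3.5]{serre} and used as a black box. So there is no in-paper argument to compare against. That said, the paper's proof of the subsequent \Cref{cor:hit} does reference the structure of Serre's argument, noting that $\Delta$ is expressed ``as a union of thin sets for which some proper subgroup $H<\Gal_{\QQ (p)} (f)$ occurs as the specialized Galois group'' --- exactly the C2 stratification you describe. Your sketch is the standard argument and matches Serre's approach; the only cosmetic difference is that you index the C2 pieces by conjugacy classes of \emph{maximal} subgroups rather than all proper subgroups, which is a harmless economy. Your identification of the specialization dictionary (Serre's Propositions 3.3.1--3.3.4) as the substantive input is also correct, and citing it is the right move here.
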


We define the \emph{Galois width} $\botn (\pi)$ of a branched cover $\pi $ in the obvious manner to be the Galois width of its Galois/monodromy group $\Gal (\pi )$ as in~\Cref{def:bottleneck-group}.
Using~\Cref{thm:hit}, we may relate the three different incarnations of Galois width studied thus far.

\begin{cor}\label{cor:hit}
With $\pi : X \to \CC^m$ and $f$ satisfying assumptions \textbf{A1}--\textbf{A3} above, there exists a thin set $\Delta \subset \QQ^m$ with the following property: for any $\tilde{p} \notin \Delta $ and any $\alpha $ with $f(\alpha ; \tilde{p}) =0,$ we have 
\begin{equation}\label{eq:thm-lower-bound}
\botn \left( \pi   \right) \le \botn \left( \Gal_{\QQ (p)} (f)\right) = \botn (\alpha ) .
\end{equation}
Furthermore, for any $\tilde{p} \in \QQ^m $ and any $\alpha $ with $f(\alpha ; \tilde{p}) =0,$ we have 
\begin{equation}\label{eq:thm-upper-bound}
\botn (\alpha )  \le \botn(\Gal_{\QQ (p)} (f)).
\end{equation}
\end{cor}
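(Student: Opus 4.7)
The plan is to assemble the statement from pieces already at hand: monotonicity of $\botn$ under subgroups (\Cref{thm:monotonicity}) and quotients (\Cref{prop:bottleneck-normal-recursion}); the identification $\botn(\alpha) = \botn(\Gal_\QQ(g))$ for $g$ the minimal polynomial of $\alpha$ (\Cref{thm:complexity}); and the Hilbert-irreducibility statement~\Cref{thm:hit}. The inequality $\botn(\pi) \le \botn(\Gal_{\QQ(p)}(f))$ appearing in~\eqref{eq:thm-lower-bound} is purely group-theoretic and independent of $\tilde{p}$: the field inclusion $\QQ(p) \subset \CC(p)$ realizes $\Gal(\pi) = \Gal_{\CC(p)}(f)$ as a subgroup of $\Gal_{\QQ(p)}(f)$ via restriction, so \Cref{thm:monotonicity} does the job.

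For the equality $\botn(\Gal_{\QQ(p)}(f)) = \botn(\alpha)$ in~\eqref{eq:thm-lower-bound}, I would take $\Delta$ to be the thin set from~\Cref{thm:hit}. For $\tilde{p} \notin \Delta$, the polynomial $f(y;\tilde{p})$ is irreducible over $\QQ$ and its Galois group equals $\Gal_{\QQ(p)}(f)$; in particular, $f(y;\tilde{p})$ is the minimal polynomial of any root $\alpha$, so \Cref{thm:complexity} gives $\botn(\alpha) = \botn(\Gal_\QQ(f(y;\tilde{p}))) = \botn(\Gal_{\QQ(p)}(f))$.

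For the general upper bound~\eqref{eq:thm-upper-bound} at arbitrary $\tilde{p} \in \QQ^m$, I would enlarge $\Delta$ by the (type C1) Zariski-closed locus of poles of $c_1,\ldots,c_d$ so that $f(y;\tilde{p}) \in \QQ[y]$ is a well-defined polynomial on the complement; values of $\tilde{p}$ lying in the pole locus can be handled by first clearing denominators in $f$. For any admissible $\tilde{p}$, the standard specialization-of-Galois-groups result---realized via the decomposition group of a prime of the splitting field of $f$ over $\QQ(p)$ lying above $\tilde{p}$---produces a surjection from a subgroup of $\Gal_{\QQ(p)}(f)$ onto $\Gal_\QQ(f(y;\tilde{p}))$. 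The minimal polynomial $g \in \QQ[y]$ of $\alpha$ divides $f(y;\tilde{p})$, and since its splitting field sits inside that of $f(y;\tilde{p})$, the group $\Gal_\QQ(g)$ is a further quotient of $\Gal_\QQ(f(y;\tilde{p}))$. Combining~\Cref{thm:monotonicity} with the quotient bound $\botn(G/N) \le \botn(G)$ furnished by~\Cref{prop:bottleneck-normal-recursion}, and then \Cref{thm:complexity}, yields $\botn(\alpha) = \botn(\Gal_\QQ(g)) \le \botn(\Gal_{\QQ(p)}(f))$.

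The main obstacle is the specialization step in the general upper bound: one needs that $\Gal_\QQ(f(y;\tilde{p}))$ is cut out, as a subquotient of $\Gal_{\QQ(p)}(f)$, by some decomposition group above $\tilde{p}$, at \emph{every} admissible $\tilde{p}$ rather than merely at a generic one. This is a standard result in algebraic number theory but requires an explicit choice of integral model and prime, plus a small argument to allow for ramification; the remainder of the corollary is a direct assembly of the monotonicity results from~\Cref{subsec:complexity}.
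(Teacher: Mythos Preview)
Your argument is correct and matches the paper's proof almost exactly. Both use the subgroup inclusion $\Gal(\pi)\le\Gal_{\QQ(p)}(f)$ together with \Cref{thm:monotonicity} for the inequality in~\eqref{eq:thm-lower-bound}, and both invoke \Cref{thm:hit} plus \Cref{thm:complexity} for the equality there.

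The only point of divergence is~\eqref{eq:thm-upper-bound}. The paper dispatches this in one line by citing the proof of \cite[Proposition~3.1.1]{serre}, which identifies the specialized Galois group directly as a \emph{subgroup} $H\le\Gal_{\QQ(p)}(f)$, so that \Cref{thm:monotonicity} alone suffices. You instead work through the decomposition-group mechanism, obtaining the specialized group as a \emph{subquotient} and then chaining \Cref{thm:monotonicity} with the quotient bound from \Cref{prop:bottleneck-normal-recursion}; you also explicitly pass to the minimal polynomial of $\alpha$ when $f(y;\tilde{p})$ is reducible. Your route is more self-contained and more transparently handles ramified specializations (where inertia may be nontrivial), at the cost of invoking an extra ingredient. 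The paper's route is terser but leans entirely on Serre's argument as a black box. Either way the content is the same.
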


\begin{proof}
Taking $\Delta $ to be the thin set guaranteed by~\Cref{thm:hit}, we have for any $\tilde{p}\notin \Delta $ that $\botn (\alpha) = \botn \left( \Gal_{\QQ (p)} (f)\right).$
Now, $\Gal (\pi ) \le \Gal_{\QQ (p)} (f),$ as explained eg.~in~\cite[\S 1.2]{sottile2021galois}; in fact, $\Gal (\pi ) \unlhd \Gal_{\QQ (p)} (f),$ since the base of $\pi$ is the rational variety $\CC^m .$
Thus~\Cref{thm:monotonicity} gives us the needed inequality in~\eqref{eq:thm-lower-bound}.
Inequality~\eqref{eq:thm-upper-bound} may be deduced from the proof of~\cite[Proposition 3.1.1]{serre}, which expresses $\Delta $ as a union of thin sets for which some proper subgroup $H<\Gal_{\QQ (p)} (f)$ occurs as the specialized Galois group.
\end{proof}

It is worth emphasizing that the thin set appearing in~\Cref{cor:hit} is ``small" in multiple senses of the word.
Indeed, several number-theoretic results show that thin sets are of small \emph{natural density}.
For instance,~\cite[Theorem 3.4.4]{serre} provides an asymptotic count of integral points in $\Delta $,
\begin{equation}\label{eq:density}
\# \left( \Delta \cap \ZZ^m \cap [-N, N] \right) = O \left( N^{m-\frac{1}{2}} \log N \right) \ll O(N^m).
\end{equation}

\Cref{cor:hit} also offers takeaways for practitioners of both symbolic and numerical computation.
In the symbolic approach, a family of problems may be studied by fixing a particular problem instance $\tilde{p}\in \QQ^m$ and computing $\Gal_{\QQ} (f(x;\tilde{p}))$.
When the latter can be carried out algorithmically (eg.~using Stauduhar's method~\cite{stauduhar}), either of the inequalities in~\Cref{thm:hit} may then provide information; the Galois width for a specific problem instance provides a lower bound for the generic case $\tilde{p}\notin \Delta .$
In other cases, heuristics such Frobenius elements may be used to show $\Gal_{\QQ (p)} (f)=S_d$ (see eg.~\Cref{ex:ml-linear}.)

Numerical approaches typically compute a subgroup of monodromy permutations $H\le \Gal (\pi )$.
One may also hope for equality; the branch point method of~\cite{jir-galois} guarantees this, although it may be slower than other heuristics and more difficult to certify.
Once more,~\Cref{thm:monotonicity} shows that even a partial computation of $\Gal (\pi) $ can provide a useful lower bound on the complexity of solving a generic problem instance.
We note that inequality~\eqref{eq:thm-upper-bound} may also be applied when the equality $\botn( \Gal_{\QQ (p)} (f)) = \botn (\Gal (\pi ))$ is known \emph{a priori}. 
This often arises when $\deg (f) \ge 5$ and $\Gal_{\QQ (p)} (f)$ is full-symmetric; this forces $\Gal (\pi)$ to either be full-symmetric or full-alternating, and in either case the Galois widths are equal.

% Our discussion of examples is biased towards numerical methods.
Numerical methods have (at least) one decisive advantage over symbolic computation: complete equations describing $X$ are not needed, much less the primitive element~\eqref{eq:eliminant}. All that we need in practice are $n$ equations cutting out $X$ locally (these may be polynomial or rational) and some method for sampling generic points on $X.$

\subsection{Homotopies and Optimal Solving}\label{subsec:optimal}

In the homotopy continuation literature, there have been various proposals of ``optimal homotopies".
Since rigorous analysis of numerical path-tracking is very difficult, the usual objective is to minimize number of solution paths tracked.
Even for such a coarse measure of complexity, none of the various proposals is completely satisfactory.
Our present study was partially motivated by the author's desire to settle the question, once-and-for-all, of what makes a homotopy optimal.
This section is partly a reflection on his failure to find clear answers.

Polyhedral homotopies constitute a well-studied class of homotopies sometimes considered to be optimal. 
They arise when solving $n$ equations in $n$ unknowns, where each equation is a (Laurent) polynomial with prescribed monomial support.
We denote by
$\mathcal{A}_{\bullet } = (\mathcal{A}_1, \ldots , \mathcal{A}_n)$ the system's prescribed support sets, $\mathcal{A}_i\subset \ZZ^n,$ and abuse notation by writing $\mathcal{A}_\bullet = \# \mathcal{A}_1 + \ldots + \# \mathcal{A}_n $ for the total number of monomials in the system.
With some mild conditions on $\mathcal{A}_\bullet $, we may consider the branched cover
\begin{align*}
\pi_{\mathcal{A}_\bullet } : \, &X_{\mathcal{A}_\bullet} \to \CC^{\mathcal{A}_\bullet }, \quad (p,x) \mapsto p, \\
&X_{\mathcal{A}_\bullet } = 
\left\{ (p, x) \in \CC^{\mathcal{A}_\bullet } \times  (\CC^*)^n \mid \sum_{\alpha \in \mathcal{A}_i } p_{i, \alpha } \, x^\alpha = 0, \, \, 1\le i \le n \right\} .
\end{align*}
The Bernstein-Kushnirenko (BK) theorem~\cite{bernstein,kushnirenko} writes the degree of $\pi_{\mathcal{A}_\bullet }$ as the mixed volume of the $n$ equations' Newton polytopes,
\begin{equation}
\deg (\pi_{\mathcal{A}_\bullet } ) = \operatorname{MV} (\mathcal{A}_\bullet ) := 
\operatorname{MV} (\operatorname{conv} (\mathcal{A}_1), \ldots , \operatorname{conv} (\mathcal{A}_n)).
\end{equation}
Correspondingly, the polyhedral homotopy method, introduced by Huber and Sturmfels~\cite{huber}, involves tracking $\operatorname{MV} (\mathcal{A}_\bullet)$ solution paths.
This method is optimal in the sense that all paths lead to a solution for ``most" systems $p\in  \CC^{\mathcal{A}_\bullet }$.
However, there are two major caveats:
\begin{enumerate}
    \item[(1)] In many applications, we are led naturally to consider a branched cover~\eqref{eq:branched-cover} which is not of the form $\pi_{\mathcal{A}_\bullet}$. 
    \item[(2)] Even if we are interested in $\pi_{\mathcal{A}_\bullet}$, it may be possible to track fewer than $\operatorname{MV} (\mathcal{A}_\bullet)$-many paths by exploiting symmetry or decomposability into branched covers of lower degree.
\end{enumerate}
In both of these scenarios, the BK theorem typically overestimates both the Galois width and the number of homotopy paths needed.
Similar caveats apply to various other root-counting technologies used to construct start systems for homotopy continuation: namely, extensions of the BK theorem via methods based on Khovanskii bases~\cite{elise,borovik2024khovanskii} or tropical geometry~\cite{helminck2022generic,leykin2019beyond}, and most of the classical approaches cataloged in~\cite[Ch.~8]{sw-book}.
In contrast, the \emph{parameter homotopy} paradigm of ~\cite[Ch.~7]{sw-book} is well-adapted to the setting of this paper, and $\botn (\bullet )$ may be viewed as a general complexity measure for this approach.

Caveats (1) and (2) above are well-known to applied algebraic geometers.
For example, the maximum-likelihood estimation problems of~\Cref{subsec:astat} provide one notable illustration of Caveat (1), as shown by the problems in~\cite[Table 1]{rank-mle} where BK overcounts by multiple orders of magnitude. 
Caveat (2) was addressed in work of Brysiewicz et al.~\cite{brysiewicz}, which showed how polyhedral homotopy could be made more efficient by exploiting special structure in the support sets $\mathcal{A}_\bullet.$
Prior to this work, Esterov~\cite[Theorem 6]{esterov} characterized the supports for which $\Gal( \pi_{\mathcal{A}_\bullet})$ is \emph{full-symmetric}, ie.~the symmetric group acting on $\operatorname{MV} (\mathcal{A}_\bullet) $ letters.
We recall a reformulation of this result in the terminology of~\cite{brysiewicz,sottile2021galois}.
The support sets $\mathcal{A}_1, \ldots , \mathcal{A}_n \subset \ZZ^n$ are said to be \emph{strictly lacunary} if they span a proper sublattice of
$\ZZ^n$ of index less than $\deg (\pi_{\mathcal{A}_\bullet })$.
We also say $\mathcal{A}_\bullet$ is \emph{triangular} if  there exists a proper subset of supports $\mathcal{A}_I = \{ \mathcal{A}_i \mid i\in I \}$, $|I| = k<n,$ spanning a sublattice in $\ZZ^n$ of rank $k$; if this holds, then up to unimodular transformation, we may assume then that $\mathcal{A}_I \subset \ZZ^k,$ and say that $\mathcal{A}_\bullet$ is \emph{strictly triangular} if $1 < \operatorname{MV} ( \mathcal{A}_I) < \operatorname{MV} ( \mathcal{A}_\bullet ).$

\begin{theorem}\label{thm:esterov}
(Esterov,~\cite{esterov})
If $\operatorname{MV} (\mathcal{A}_\bullet ) > 0$, then
$\Gal (\pi_{\mathcal{A}})$ is full-symmetric if and only if $\mathcal{A}_\bullet$ is neither strictly triangular nor strictly lacunary.
\end{theorem}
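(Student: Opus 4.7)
The plan is to prove the two directions of the biconditional separately. The easier ``only if'' direction is to show that if $\mathcal{A}_\bullet$ is strictly lacunary or strictly triangular, then the cover $\pi_{\mathcal{A}_\bullet}$ factors nontrivially through an intermediate branched cover, whence $\Gal(\pi_{\mathcal{A}_\bullet})$ embeds into a wreath product $S_a \wr S_b$ with $ab = \operatorname{MV}(\mathcal{A}_\bullet)$ and $a,b>1$, and therefore cannot be the full symmetric group (cf.~\Cref{rem:symmetric-failure}). In the lacunary case, if the $\mathcal{A}_i$ span a sublattice $L \subsetneq \ZZ^n$ of index $m$ with $1 < m < \operatorname{MV}(\mathcal{A}_\bullet)$, a monomial change of variables dual to a basis of $L$ reformulates the system in new coordinates $y$; recovering $x$ from $y$ then amounts to extracting roots prescribed by the finite abelian quotient $\ZZ^n/L$, and $\pi_{\mathcal{A}_\bullet}$ factors as this root extraction (of degree $m$) composed with solving the reformulated system. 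In the triangular case, the subset $\mathcal{A}_I$ spans a rank-$k$ sublattice, so after a unimodular coordinate change one first solves the $k$ equations indexed by $I$ in the first $k$ variables, contributing a factor of $\operatorname{MV}(\mathcal{A}_I)$, and then backsolves the remaining $n-k$ equations over each partial solution.

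For the ``if'' direction, the strategy is the classical blueprint for recognizing full-symmetric Galois groups: show that $\Gal(\pi_{\mathcal{A}_\bullet})$ is primitive as a subgroup of $S_d$, where $d = \operatorname{MV}(\mathcal{A}_\bullet)$, and contains a transposition. Jordan's theorem then forces $\Gal(\pi_{\mathcal{A}_\bullet}) = S_d$. To produce a transposition, one exhibits a simple branch point of $\pi_{\mathcal{A}_\bullet}$: a generic point of a codimension-one component of the discriminant corresponds to exactly two solutions colliding in a quadratic node, and the local Picard--Lefschetz monodromy around such a point is a transposition. A dimension count on the space of sparse systems with at most two coincident simple solutions suffices to guarantee the existence of such a branch point, provided the total space $X_{\mathcal{A}_\bullet}$ is sufficiently well-behaved (e.g., generically reduced along the critical locus), which one checks directly from the defining equations.

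The main obstacle is establishing primitivity. A nontrivial block structure on a generic fiber would yield a decomposition $\pi_{\mathcal{A}_\bullet} = \pi_2 \circ \pi_1$ of branched covers with both factors of degree greater than one. The crux---and this is where Esterov's contribution lies---is to show that any such decomposition must be induced either by a sublattice of the common lattice spanned by $\mathcal{A}_\bullet$ (forcing strict lacunarity) or by the possibility of solving a proper subsystem first (forcing strict triangularity). The argument reconstructs the combinatorial data of the factorization from the Newton-polytope geometry of the supports, via a careful analysis of how monomial maps interact with the sparse structure; this is the technical heart of~\cite{esterov}, and a complete proof would refer there for the polyhedral details.
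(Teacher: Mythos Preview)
The paper does not give its own proof of this theorem: it is stated as a result of Esterov with a citation to~\cite{esterov}, and no argument appears in the text. There is therefore nothing in the paper to compare your proposal against.

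That said, your outline is a faithful sketch of the strategy in Esterov's paper. The ``only if'' direction via decomposition into a nontrivial wreath product is exactly the standard argument (and is essentially what~\cite{brysiewicz} also observes). For the ``if'' direction, Esterov indeed establishes that the monodromy group is doubly transitive (hence primitive) and contains a transposition, the latter coming from local monodromy near a generic discriminantal point. You correctly identify that the substantive content lies in ruling out any decomposition not arising from lacunarity or triangularity; this reduction is carried out in~\cite{esterov} through a detailed analysis of how the Newton polytopes constrain possible intermediate covers, and you are right to defer to that source rather than attempt to reproduce it. Your proposal is a reasonable summary, but as written it is an outline rather than a proof---which is also all the paper itself offers, via citation.
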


\begin{example}
Let $\mathcal{A}_i = \{ 0, 2 e_i \},$ where $0, e_i\in \ZZ^n$ are the zero and $i$-th standard basis vector, for $i=1,\ldots , n.$
This encodes the sparse system
\begin{equation}\label{eq:sparse-guy}
p_{1,1} x_1^2 + p_{1,2} =  \ldots = p_{n,1} x_n^2 + p_{n,2} = 0.
\end{equation}
$\operatorname{conv} (\mathcal{A}_i)$ is a segment, and $\operatorname{MV} ( \mathcal{A}_\bullet )$ is the volume of the zonotope 
\begin{equation}
\operatorname{conv} (\mathcal{A}_1) + \ldots + \operatorname{conv} (\mathcal{A}_n) = [0,2]^{\times n},
\end{equation}
as expected since $\deg (\pi_{\mathcal{A}_\bullet })=2^n.$
In this case, the supports $\mathcal{A}_\bullet $ are both strictly lacunary and strictly triangular. 
The function field $\CC ( X_{\mathcal{A}_\bullet })$ is a Galois extension of $\CC( \CC^{\mathcal{A}_\bullet})$, and $\Gal (\pi_{\mathcal{A}_\bullet })\cong \left( \ZZ / \ZZ_2 \right)^{\times n}$.
For this example, the gap between Galois width and degree is exponential in $n$:
\[
\botn ( \pi_{\mathcal{A}_\bullet}) = 2 < 2^n = \deg (\pi_{\mathcal{A}}).
\]
\end{example}

The two conditions appearing in~\Cref{thm:esterov} reflect two birational invariants of a general branched cover related to its Galois width: namely, the group of deck transformations and decomposability.
\begin{definition}\label{def:deck-decomposable}
Let $\pi : X \dashrightarrow Z$ be a branched cover.
A rational map $\Psi : X \dashrightarrow X$ is said to be a \emph{deck transformation} if $\pi ( \Psi (x)) = \pi (x)$ for all $x$ in a common domain of definition. 
If $\pi_1 : X \dashrightarrow Y $ and $\pi_2 : Y \dashrightarrow Z$ are branched covers of degree strictly less than $\deg (\pi) $ with $\pi = \pi_2 \circ \pi_1$ on a common domain of definition, we say that $\pi $ is decomposable, and say that the $\pi_i$ give a decomposition of $\pi .$
\end{definition}
The deck transformations form a group $\Deck (\pi)$ with respect to composition; if this group is nontrivial, then 
\[
X \dashrightarrow X / \Deck (\pi) \dashrightarrow Z
\]
is a decomposition unless $\Gal (\pi )$ is a cyclic group of prime order, in which case also $\Gal (\pi ) \cong \Deck (\pi)$.
When $\Gal (\pi ) \cong \Deck (\pi),$ we say that $\pi $ is a \emph{Galois cover}.
Many previous works have utilized either deck transformations or decompositions, at least implicitly, to speed up numerical homotopy continuation methods.
One may consult~\cite[\S 7.6]{sw-book} for a discussion in the context of parameter homotopy.
Recent articles such as~\cite{brysiewicz,gm-vision,rec-symmetry,amendola2016solving} provide further applications.

Much like~\Cref{prop:bottleneck-normal-recursion}, there is a clean splitting principle that relates the Galois widths of a branched cover and its various decompositions.
\begin{prop}\label{prop:decomp-width}
If $\pi = \pi_2 \circ \pi_1$ is a decomposition, then
\[
\botn \left( 
\pi  
\right)
=
\max \left(
\botn \left( 
\pi_1  \right), \,
\botn \left( 
\pi_2 
\right)
\right).
\]
\end{prop}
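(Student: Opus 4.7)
The plan is to produce a normal subgroup $N \unlhd \Gal(\pi)$ with quotient isomorphic to $\Gal(\pi_2)$ and with $\botn(N) = \botn(\Gal(\pi_1))$, then invoke~\Cref{prop:bottleneck-normal-recursion}. Fix a generic basepoint $z_0 \in Z$. The decomposition $\pi = \pi_2 \circ \pi_1$ partitions $\pi^{-1}(z_0)$ into blocks $\pi_1^{-1}(y)$ indexed by $y \in \pi_2^{-1}(z_0)$, and monodromy of $\pi$ respects this partition via a homomorphism $\varphi : \Gal(\pi) \to \operatorname{Sym}(\pi_2^{-1}(z_0))$ whose image is exactly $\Gal(\pi_2)$ (every loop in $Z$ lifts through $\pi_2$). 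Set $N := \ker \varphi$, so $N \unlhd \Gal(\pi)$ and $\Gal(\pi)/N \cong \Gal(\pi_2)$.

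Each $\sigma \in N$ preserves each block $\pi_1^{-1}(y)$, so restriction gives a projection $p_y : N \to \operatorname{Sym}(\pi_1^{-1}(y))$, and these projections together define an embedding $\iota : N \hookrightarrow \prod_y \operatorname{Sym}(\pi_1^{-1}(y))$. The key monodromy-theoretic claim is that $p_y(N) = \Gal(\pi_1)$ for each $y$: an element of $N$ comes from a loop $\gamma$ in $Z$ whose $\pi_2$-lift at $y$ is again a loop, and the $\pi_1$-lift of this $Y$-loop realizes $p_y(\sigma)$ as an element of $\Gal(\pi_1)$; conversely, every loop in $Y$ based at $y$ arises as such a lift of some loop in $Z$. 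Thus $\iota$ factors through an embedding $N \hookrightarrow \Gal(\pi_1)^{\deg(\pi_2)}$, with each coordinate projection surjective.

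I then iterate~\Cref{prop:bottleneck-normal-recursion}: applied to $N \unlhd \Gal(\pi)$ it gives $\botn(\Gal(\pi)) = \max\bigl(\botn(N),\, \botn(\Gal(\pi_2))\bigr)$, and applied inductively to the direct product $\Gal(\pi_1)^{\deg(\pi_2)}$ it gives $\botn(\Gal(\pi_1)^{\deg(\pi_2)}) = \botn(\Gal(\pi_1))$. Combining the embedding of the previous paragraph with~\Cref{thm:monotonicity} yields $\botn(N) \le \botn(\Gal(\pi_1))$, while the surjection $p_y : N \twoheadrightarrow \Gal(\pi_1)$ combined with another application of~\Cref{prop:bottleneck-normal-recursion} yields $\botn(N) \ge \botn(\Gal(\pi_1))$. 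Hence $\botn(N) = \botn(\Gal(\pi_1))$, and substitution completes the proof.

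The main obstacle is justifying the key monodromy claim: although topologically intuitive, verifying that each $p_y$ lands in and surjects onto $\Gal(\pi_1)$ requires carefully relating the fundamental groups of suitable Zariski-open subsets of $Y$ and $Z$, since $\Gal(\pi_1)$ is defined via loops in $Y$ whereas $N$ arises from loops in $Z$. Once this bridge between the two fundamental groups is secured, everything else is bookkeeping with~\Cref{prop:bottleneck-normal-recursion,thm:monotonicity}.
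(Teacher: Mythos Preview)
Your overall strategy is close to the paper's, and the inequality $\botn(\pi)\le\max(\botn(\pi_1),\botn(\pi_2))$ is correctly obtained: the embedding $N\hookrightarrow\Gal(\pi_1)^{\deg(\pi_2)}$ together with \Cref{thm:monotonicity} and iterated \Cref{prop:bottleneck-normal-recursion} gives $\botn(N)\le\botn(\Gal(\pi_1))$, and then $\botn(\Gal(\pi))=\max(\botn(N),\botn(\Gal(\pi_2)))$ finishes that direction.

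The gap is in the reverse inequality. Your ``key monodromy claim'' that $p_y(N)=\Gal(\pi_1)$ is false in general, not merely technically delicate. The converse you state---that every loop in $Y$ at $y$ is the $\pi_2$-lift of its pushforward $\pi_2\circ\delta$---is true, but $\pi_2\circ\delta$ only yields an element of the \emph{block stabilizer} $\mathrm{Stab}_y\le\Gal(\pi)$, not of $N=\bigcap_{y'}\mathrm{Stab}_{y'}$. Concretely, let $\pi:X\to Z$ be a Galois cover with group $S_3$ in its regular representation (degree $6$), and take $Y=X/\langle(12)\rangle$. Then $\Gal(\pi_2)\cong S_3$ (standard action on three cosets), $\Gal(\pi_1)\cong S_2$, but $N=\ker(\Gal(\pi)\to\Gal(\pi_2))$ is trivial, so $p_y(N)=\{e\}\neq\Gal(\pi_1)$. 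Your deduction $\botn(N)\ge\botn(\Gal(\pi_1))$ therefore fails: here $\botn(N)=1<2=\botn(\Gal(\pi_1))$.

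The fix, which is what the paper does, is to replace $N$ by $\mathrm{Stab}_y$ for the lower bound on $\botn(\Gal(\pi_1))$. The map $\mathrm{Stab}_y\to\mathrm{Sym}(\pi_1^{-1}(y))$ \emph{does} surject onto $\Gal(\pi_1)$ (your converse argument proves exactly this), so \Cref{prop:bottleneck-normal-recursion} gives $\botn(\Gal(\pi_1))\le\botn(\mathrm{Stab}_y)$, and \Cref{thm:monotonicity} applied to the subgroup $\mathrm{Stab}_y\le\Gal(\pi)$ gives $\botn(\mathrm{Stab}_y)\le\botn(\Gal(\pi))$. With this correction both inequalities go through.
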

\begin{proof}
First, note that $\Gal (\pi_2)$ is a homomorphic image of $\Gal (\pi )$ arising from the action of $\Gal (\pi )$ on fibers $\pi_2^{-1} (z)$. 
Thus,~\Cref{prop:bottleneck-normal-recursion} implies $\botn\left( \Gal (\pi_2) \right) \le \botn\left( \Gal (\pi) \right)$.
Similarly, fixing $y\in \pi_2^{-1} (z),$ $\Gal (\pi_1)$ is a homomorphic image of the subgroup of $\Gal (\pi )$ that stabilizes $\pi_1^{-1} (y)$ setwise. Thus, applying now both~\Cref{prop:bottleneck-normal-recursion} and~\Cref{thm:monotonicity}, we obtain that $\botn \left( \Gal (\pi_1 )\right) \le \botn \left( \Gal ( \pi ) \right).$
Finally, since $\Gal (\pi )$ is a subgroup of the wreath product 
$
\Gal (\pi_1) \wr \Gal (\pi_2)
=
\Gal (\pi_1)^{\deg (\pi_2)} \rtimes  \Gal (\pi_2),
$
we may use~\Cref{prop:bottleneck-normal-recursion} and~\Cref{thm:monotonicity} to deduce that $$\botn (\Gal (\pi )) \le \max\left(  \botn \left(\Gal (\pi_1), \Gal (\pi_2) \right) \right).$$ 
\end{proof}

Now, briefly returning to the setting of sparse systems, we note a corollary of Esterov's~\Cref{thm:esterov} and~\Cref{prop:decomp-width}.
\begin{cor}
The equivalent conditions of~\Cref{thm:esterov} are satisfied if and only if  $\botn \left( \pi_{\mathcal{A}_\bullet}\right) = \operatorname{MV} (\mathcal{A}_\bullet)$. 
\end{cor}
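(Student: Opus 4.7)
The plan is to combine Esterov's~\Cref{thm:esterov} with the decomposition principle~\Cref{prop:decomp-width} and the explicit Galois width of the symmetric group given in~\eqref{eq:bot-sn-an}. For the forward direction (Esterov's conditions imply $\botn(\pi_{\mathcal{A}_\bullet}) = \operatorname{MV}(\mathcal{A}_\bullet)$),~\Cref{thm:esterov} yields $\Gal(\pi_{\mathcal{A}_\bullet}) = S_d$ where $d = \operatorname{MV}(\mathcal{A}_\bullet)$, so by~\eqref{eq:bot-sn-an} we get $\botn(\pi_{\mathcal{A}_\bullet}) = \botn(S_d) = d$, at least when $d \neq 4$. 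The anomaly $\botn(S_4) = 3$ is worth flagging---strictly speaking, the corollary should either carry the tacit assumption $\operatorname{MV}(\mathcal{A}_\bullet) \neq 4$ or be weakened on this side, since two generic conics give a counterexample $(\Gal = S_4$, $\botn = 3 < 4)$.

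For the converse direction, assume Esterov's conditions fail; I would construct an explicit decomposition of $\pi_{\mathcal{A}_\bullet}$ in either case. If $\mathcal{A}_\bullet$ is strictly triangular with subset $I$ of size $k < n$, projection onto the coordinates indexed by $I$ gives a decomposition $\pi_{\mathcal{A}_\bullet} = \pi_2 \circ \pi_1$ in which $\pi_2$ solves the $k$-dimensional subsystem (of degree $\operatorname{MV}(\mathcal{A}_I)$) and $\pi_1$ lifts these partial solutions by back-substitution (of degree $\operatorname{MV}(\mathcal{A}_\bullet)/\operatorname{MV}(\mathcal{A}_I)$). Strict triangularity forces both degrees to lie strictly between $1$ and $\operatorname{MV}(\mathcal{A}_\bullet)$. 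If $\mathcal{A}_\bullet$ is strictly lacunary, the sublattice $L \subsetneq \ZZ^n$ of index $\ell$ spanned by the supports induces, via the dual monomial map on $(\CC^*)^n$, a group of deck transformations of order $\ell$; the quotient by this group yields a decomposition $\pi_{\mathcal{A}_\bullet} = \pi_2 \circ \pi_1$ with $\deg \pi_1 = \ell$ and $\deg \pi_2 = \operatorname{MV}(\mathcal{A}_\bullet)/\ell$, both strictly between $1$ and $\operatorname{MV}(\mathcal{A}_\bullet)$ by strict lacunarity.

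In either case, applying~\Cref{prop:decomp-width} together with the fact that $\botn(\pi_i) \le \deg(\pi_i)$ (which follows from~\Cref{lem:stabchain-perm} applied to the transitive action of $\Gal(\pi_i)$ on $\deg(\pi_i)$ letters), we conclude
\[
\botn(\pi_{\mathcal{A}_\bullet}) = \max(\botn(\pi_1), \botn(\pi_2)) \le \max(\deg \pi_1, \deg \pi_2) < \operatorname{MV}(\mathcal{A}_\bullet),
\]
contradicting $\botn(\pi_{\mathcal{A}_\bullet}) = \operatorname{MV}(\mathcal{A}_\bullet)$ and thereby proving the contrapositive.

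The main obstacle is making the two decompositions rigorous at the level of branched covers of varieties in the sense of~\Cref{def:deck-decomposable}. The triangular case amounts to a toric analogue of block-triangular elimination and is essentially transparent. The lacunary case is more delicate: one must verify that the monomial quotient map $(\CC^*)^n \to (\CC^*)^n$ descends to a well-defined map between the incidence varieties $X_{\mathcal{A}_\bullet}$ and that its degree is exactly the lattice index $\ell$. Both constructions are implicit in~\cite{esterov} and more explicitly carried out in~\cite{brysiewicz}, which one could cite to avoid reproducing the toric-geometric details.
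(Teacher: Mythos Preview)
Your argument is correct and follows the same route as the paper, which simply cites~\cite{brysiewicz} for the decomposability of $\pi_{\mathcal{A}_\bullet}$ in the strictly lacunary and strictly triangular cases and then implicitly combines~\Cref{prop:decomp-width} with~\eqref{eq:bot-sn-an}. Your flag on the $\operatorname{MV}(\mathcal{A}_\bullet)=4$ anomaly is well taken: the paper does not address it, and your two-generic-conics example shows the forward implication indeed fails there.
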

This follows since if $\mathcal{A}_\bullet$ is either strictly lacunary or strictly triangular, then $\pi_{\mathcal{A}_\bullet}$ is decomposable, as observed in~\cite{brysiewicz}.
Galois groups in the decomposable cases are not known, despite continued attention and partial results~\cite{esterov2}.
Determining Galois widths may be more manageable.

Our study of Galois width falls in line with a general philosophy \emph{that intrinsic complexity of solving algebraic equations is best measured through birational invariants of the associated branched covers.}
Degree, Galois width, decomposability, and deck transformations may thus all be viewed as intrinsic measures of complexity for parametric systems of polynomial equations, whereas various bounds on the degree like BK generally cannot be.
This can be contrasted with several interesting, yet fundamentally less intrinsic, studies of when the BK bound is tight (eg.~\cite{tianran,ed-bk-exact,lindberg2023algebraic}.)

From a practical point of view, many of the problems presented in~\Cref{sec:examples}, decompose into solving one very costly ``bottleneck" subproblem and other, smaller, subproblems, and it is tempting to conclude that $\botn (\bullet )$ measures not only the complexity of optimal algebraic algorithms as in~\Cref{sec:bottleneck}, but also an optimal parameter homotopy.
This tempting conclusion, however, should be met with some resistance.

The proof of~\Cref{thm:complexity} can be adapted into a complete procedure for optimally decomposing a Galois cover.
Unfortunately, this procedure is impractical for all but the simplest examples, as it relies on finding irreducible components of iterated fiber products.
Given a branched cover $\pi :X \dashrightarrow Z$ and an integer $k$, we recall the $k$-th fiber product 
\begin{equation}\label{eq:fiber-product}
X_Z^{k} = \{ 
(x_1, \ldots , x_k,z) \in X^{\times k } \times Z \mid \pi (x_i ) = z, \, 1\le i \le k \} .
\end{equation}
(Note: the notation in~\Cref{eq:fiber-product} assumes that $\pi $ is clear from the context.)
When $1\le k \le \deg (\pi ),$ there is a nonempty variety 
\begin{equation}\label{eq:galclo}
    X_Z^{(k)} := X_Z^k \setminus \Delta 
\end{equation} where $\Delta $ is the ``big diagonal" containing all points where $x_i=x_j$ for some $i\ne j.$
The variety $X_Z^{(k)}$ is irreducible if and only if $\Gal (\pi )$ acts transitively on ordered $k$-tuples of solutions $(x_1, \ldots , x_k).$
When $k=\deg (\pi ),$ each top-dimensional irreducible component of $X_Z^{(k)}\setminus \Delta $ is a variety $\hat{X}$ whose function field $\CC (\hat{X})$ is a Galois closure for the extension $\CC (X) / \CC (Z)$. 

The Galois covers $\hat{\pi} :\hat{X} \to Z$ are quite unlike the branched covers one usually meets in practice. 
Note, for example, that the ability to sample a generic point on $\hat{X}$ is the same as computing an entire fiber of $\pi ,$ which would obviate the need for any solving.
Although the argument in~\Cref{thm:complexity} shows that decomposing $\hat{\pi }$ along a chain of subcovers is theoretically optimal, an ad-hoc approach analyzing the original branched cover $\pi$ is far more likely to yield practical results.
On the other hand, decomposing $\pi $ itself may \emph{not} be optimal.
We close with an example from enumerative geometry which reflects some of these difficulties.

\begin{example}\label{ex:cubic-surfaces}
We consider the problem of computing determinantal representations of cubic surfaces.
Given a homogeneous polynomial of degree $3,$ $\gl{f} \in \gl{\CC[x_0, \ldots , x_3]_{3}},$ 
can we find $\gr{M_0, \ldots ,M_3 \in \CC^{3\times 3}}$ with
\[
\gl{f(x_0, x_1, x_2, x_3)} \, \propto \, \det \left( 
\gl{x_0} \gr{M_0} + \gl{x_1} \gr{M_1} + \gl{x_2} \gr{M_2} + \gl{x_3} \gr{M_3}
\right)?
\]
If $f$ is generic, then up to the action of $\gr{\operatorname{GL}_3^{\times 2}} $ defined by $\gr{(H_1, H_2)} \cdot \gr{M_i} = \gr{H_1 M_i H_2^{-1}},$ we may restrict to determinantal representations of the form 
\[
\gr{M_0} = \gr{I},
\,  
\gr{M_1} = \gr{\operatorname{diag} (x_1,x_2,x_3)} ,
\,
\gr{M_2} 
= \gr{\begin{bmatrix}
x_4 & x_5 & x_6 \\
x_5 & x_7 & x_8\\
x_9 & x_8 & x_{10}  
\end{bmatrix}
},
\, 
\gr{M_3} 
= \gr{\begin{bmatrix}
x_{11} & x_{12} & x_{13}\\
x_{14} & x_{15} & x_{16}\\
x_{17} & x_{18} & x_{19}
\end{bmatrix}
}.
\]
This determines a branched cover,
\begin{align*}
\pi : X \to \PP^{19}, 
\quad 
\left( (M_0, M_1, M_2, M_3), f \right) \mapsto f.
\end{align*}
It is known classically that a smooth cubic surface $f$ has exactly $72$ distinct orbits of determinantal representations under the $\gr{\operatorname{GL}_3^{\times 2}}$-action.
In our chosen parametrization, we have $\deg (\pi ) =1728 = 24 \cdot 72.$
The deck transformation group $\Deck (\pi )$ has order $48$; it is generated by the map sending $M_i\mapsto M_i^T$ and a subgroup of order $24$ which preserves $\gr{\operatorname{GL}_3^{\times 2}} $-orbits.
Thus, we have a decomposition $\pi = \pi_2 \circ \pi_1$, where $\pi_1 : X \dashrightarrow X / \Deck (\pi )$ is a Galois cover with $\botn \left( \pi_1\right) = 4$ and $\pi_2: X / \Deck (f) \dashrightarrow \PP^{19}$ with $\deg (\pi_2) =36.$
As it turns out, $\pi_1$ is not decomposable, so we might reasonably guess that $\botn \left(\pi \right) = 
\botn \left(\pi_2\right) = 36$.
But, in fact,
\[
\botn \left(\pi\right) = 
\botn \left(\pi_2\right) =
27,
\]
due to the fact that the determinantal representations of $f$ are rational functions of the $27$ lines on $f$ (cf.~\cite{dolgachev,buckley}.)
With respect to the computational model in~\Cref{subsec:model}, using these rational functions of the $27$ lines is algebraically optimal for computing these determinantal representations.
\end{example}

\section{Applications}\label{sec:examples}

\subsection{Metric Algebraic Geometry}\label{subsec:mag}
\begin{figure}[h]
    \centering
\includegraphics[width=0.75\linewidth]{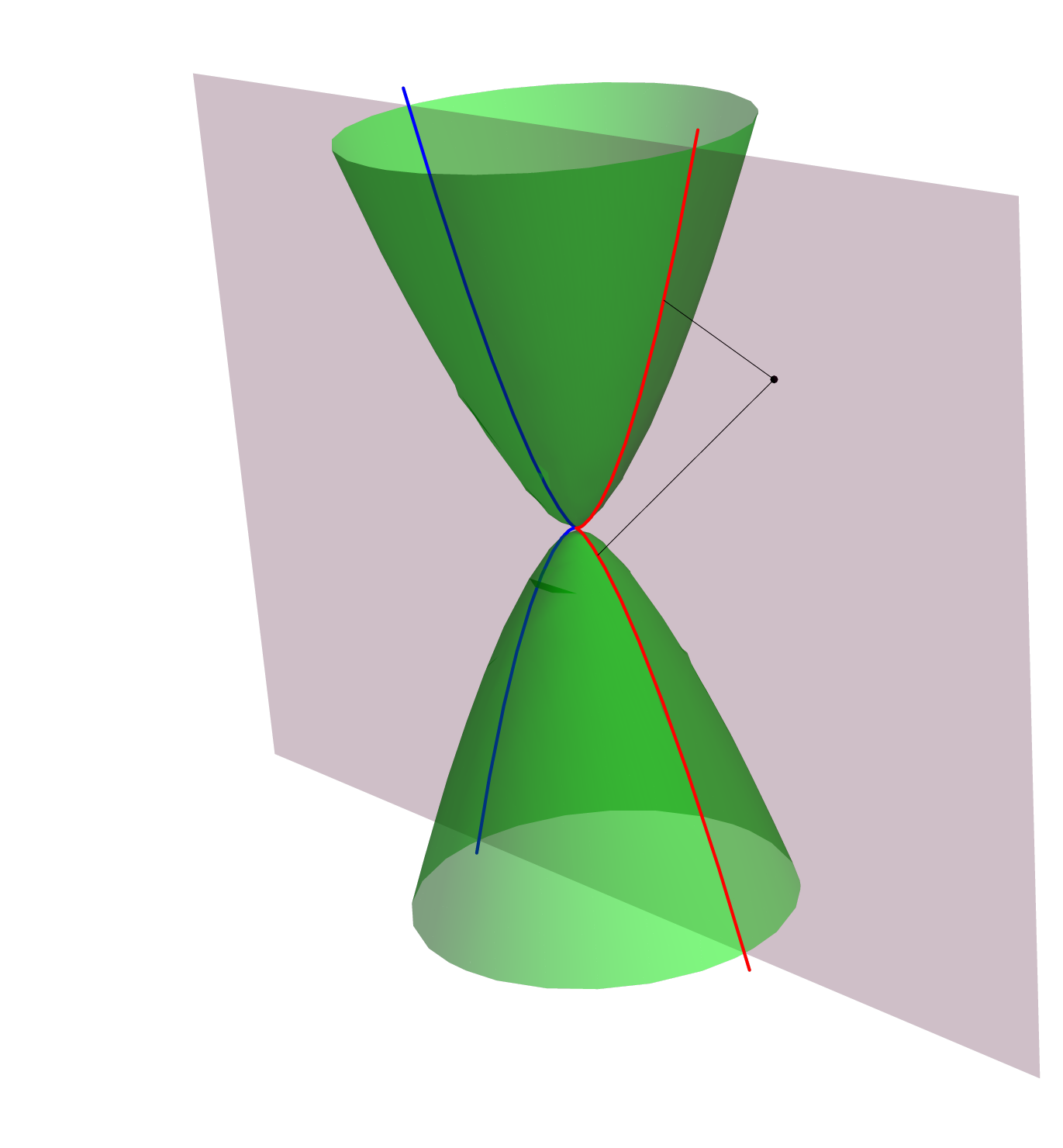}
    \caption{Minimizing the distance from a point in $\RR^3$ to the surface of~\Cref{ex:solid-rev}.}
    \label{fig:cusps}
\end{figure}
The emerging field of \emph{metric algebraic geometry}~\cite{bks} \cite{weinstein2021metric} studies interactions between algebraic varieties and metric structures on spaces they inhabit.
One of the foundational problems in this field is solving the \emph{nearest point problem for a real algebraic variety.}
A general measure of algebraic complexity for this problem is the \emph{Euclidean Distance (ED) degree}.
The systematic study of the ED degree was initiated in~\cite{dhost}, and the subject has received significant attention since.
As we will illustrate, the Galois width captures more refined information about the nearest-point problem and its variants.
For certain, highly-structured problems, we will see how the Galois width
characterizes the difficulty of solving the nearest-point problem more accurately than the ED degree.

Let $X\subset \CC^n$ be a closed, irreducible subvariety.
We assume that the set of real points $X_\RR = X \cap \RR^n$ contains a smooth point $x \in X^{\text{sm}}.$
Given a data point $u=(u_1, \ldots , u_n) \in \RR^n,$ the nearest-point problem is
\begin{equation}\label{eq:close-point}
\displaystyle\min_{x\in X_\RR} \|u-x \|^2.
\end{equation}
Here $\| \bullet \|$ is the usual $2$-norm on $\RR^n.$
Note that the minimum is attained since $X_\RR\subset \RR^n$ is closed.
From an algebraic point of view, the minimizer $x$ in~\eqref{eq:close-point} is just one of many critical points.
These points can be studied using the \emph{Euclidean Distance Correspondence} \begin{equation}
    \mathcal{E}_X = \left\{ 
(u, x) \in \CC^n \times X^{\text{sm}} \mid  
(x-u) \in \left(T_x (X)\right)^\perp
    \right\}
\end{equation}
and an associated branched cover
\begin{equation}
\pi_{\mathcal{E}_X} : \mathcal {E}_X \to \CC^n , 
\quad (u,x ) \mapsto u,
\end{equation}
whose fibers $\pi_{\mathcal{E}_X}^{-1}(u)$ are the complex-valued critical points over $u.$
The ED degree $\ED (X)$, then, is simply the number of points in the fiber for generic $u\in \CC^n.$ 
When $u\in \RR^n,$ the real points in $\pi_{\mathcal{E}_X}^{-1} (u)$ provide candidates for the global minimizer of~\eqref{eq:close-point}.
An additional candidate that must be checked is the closest point to $u$ on the singular locus $X_\RR^{\text{sing}}$.

Noting the general inequality
\begin{equation}
 \botn \left(\pi_{\mathcal{E}_X}\right)\le    \ED (X),
\end{equation}
we now consider an example where this inequality is strict.

\begin{example}\label{ex:solid-rev}
Consider the irreducible surface
\begin{equation}
X = \{ x \in \CC^3 \mid f(x) = 0\},
\quad 
f(x_1,x_2,x_3) = x_3^4  - (x_1^2+x_2^2)^3.
\end{equation}
Given a data point $u=(u_1,u_2,u_3)\in \RR^3$, we may may minimize the squared Euclidean distance function
\begin{equation}\label{eq:npp-surf-1}
d (x ; u ) = (x_1-u_1)^2 + (x_2-u_2)^2 + (x_3-u_3)^2 .
\end{equation}
over $x\in X_\RR$ by introducing a Lagrange multiplier $\lambda $ and writing down the first-order conditions for optimality,
\begin{equation}\label{eq:foo}
\nabla_{x,\lambda} \left( d + \lambda f \right)
=
\begin{bmatrix}
2 (x_1-u_1) -3 \lambda (x_1^2 + x_2^2)^2 x_1\\
2 (x_2-u_2)  -3 \lambda (x_1^2 + x_2^2)^2 x_2\\
2 (x_3-u_3) + 4 \lambda x_3^3 \\
f(x_1,x_2,x_3)
\end{bmatrix}
=
\begin{bmatrix}
0\\0\\0\\0
\end{bmatrix}.
\end{equation}
Using the first two equations in~\eqref{eq:foo}, we see for generic $u$ that any solution $(x_1,x_2,x_3)$  must lie in the plane
\begin{equation}
x_2 ( x_1 - u_1) - x_1 (x_2 - u_2) = u_2 x_1 - u_1 x_2 = 0.
\end{equation}
Intersecting this plane with $X,$ as shown in~\Cref{fig:cusps}, we obtain a plane curve  which is the union of two touching cuspidal cubics,
\begin{equation}\label{eq:cusps}
\left( x_3^2 - \sqrt{\left( 1 + \left(u_2 / u_1\right)^2\right)} x_1^3 \right)
\left( x_3^2 + \sqrt{\left( 1 + \left(u_2 / u_1\right)^2\right)} x_1^3 \right) = 0.
\end{equation}
The ED cover $\pi : \mathcal{E}_X \to \CC^3$ decomposes as $\pi = \pi_2 \circ \pi_1,$ where
\begin{align*}
&\pi_1 : \mathcal{E}_X \to Y , \quad \pi_2 : Y \to \CC^3, \\
&Y = \{ (s,u_1,u_2,u_3) \in \CC^4 \mid u_1^2 (s^2 -1) -  u_2^2= 0 \}. 
\end{align*}
Note that $\deg (\pi_2)=2.$
The cuspidal cubics have ED degree $4$---this may be verified directly from their usual monomial parametrizations. 
From this observation, it also follows that $\deg (\pi_1 ) = 4.$ Thus,
\[
\botn \left(\Gal (\pi_{\mathcal{E}_X})\right)
\le 
\botn (S_4 \wr S_2)
=
\botn (S_4^2 \rtimes  S_2)
=
\min (4,2)
=4 < 8 = \ED (X).
\]
A curious feature of this example is that  $X_\RR^{\text{sing}}= \{ (0,0,0) \}$ only attains the minimum in~\eqref{eq:close-point} when $u=(0,0,0).$
For either one of the cuspidal cubics in~\eqref{eq:cusps}, its cusp attains the minimum distance precisely when $u$ belongs to a \emph{Voronoi cell} in the opposite half-plane (see~\cite[Fig.~2]{voronoi}.) 
The two opposite Voronoi cells intersect only in the origin.

In summary, although $\ED (X)=8,$ the Galois width reveals that nearest-point problem for $X$ admits a closed-form solution in radicals.
\end{example}

For real varieties with highly-structured Euclidean geometry, like the surface of revolution in~\Cref{ex:solid-rev}, computing the Galois width may provide a critical first clue towards a practical solution to the nearest-point problem.
Our next two examples belong to another such special class of varieties, the \emph{orthogonally-invariant matrix varieties} studied in~\cite{oimv1,oimv2}.

\begin{example}\label{ex:ed-essential}
Consider the affine cone of \emph{essential matrices},
\begin{equation}
X^{\text{es}} = \{ E \in \CC^{3\times 3} \mid E E^T E - (1/2) \operatorname{tr} (EE^T) E =0, \, \, \det E =0\},
\end{equation}
a much-studied object in  computer vision.
Over the reals, this variety has a well-known spectral characterization: a nonzero matrix $E$ belongs to $X_\RR^{\text{es}}$ if it has rank 2 and two of its singular values are equal. 
In fact, given a generic matrix $F\in \RR^{3\times 3},$ one may compute the closest point $E_*\in X_\RR^{\text{es}}$ to $F$ via its singular value decomposition (see eg.~\cite[Theorem 5.9]{ma2004invitation}),
\[
F = U \operatorname{diag} \left(\sigma_1, \sigma_2, \sigma_3\right) V^T
\quad \Rightarrow 
\quad 
E_* = U \operatorname{diag} \left(\displaystyle\frac{\sigma_1+\sigma_2}{2},\displaystyle\frac{\sigma_1+\sigma_2}{2},0\right)  V^T.
\]
Furthermore,~\cite[Example 2.7]{oimv2} implies that all critical points are real:
\begin{align*}
\pi_{\mathcal{E}_{X^{\text{es}}}}^{-1} (F) = \Big \{ &\left(F, \, U \operatorname{diag} \left(\displaystyle\frac{\sigma_1\pm\sigma_2}{2},\displaystyle\frac{\sigma_2\pm\sigma_1}{2},0\right)V^T \right) ,\\ 
&\left(F, \, U\operatorname{diag} \left(\displaystyle\frac{\sigma_1\pm\sigma_3}{2},0 ,\displaystyle\frac{\sigma_3\pm\sigma_1}{2}\right) V^T\right) ,\\
&\left(F, \, U \operatorname{diag} \left(0, \displaystyle\frac{\sigma_2\pm\sigma_3}{2},\displaystyle\frac{\sigma_3\pm\sigma_2}{2}\right)V^T\right) \Big\}.
\end{align*}
Thus $\ED (X^{\text{es}})=6$.
However, $\botn \left( \pi_{\mathcal{E}_{X^{\text{es}}}} \right)=3.$
Unlike in~\Cref{ex:solid-rev}, the ED cover for this example has a nontrivial deck transformation,
\[
\Psi : \mathcal{E}_{X^{\text{es}}} \to \mathcal{E}_{X^{\text{es}}} , \quad (F, E) \mapsto (F, F-E).
\]
Here, again, the Galois width reflects the algebraic difficulty of obtaining a solution---namely, the cost of eigendecomposing a $3\times 3$ matrix.
\end{example}

As the next example shows, the gap between the Galois width and ED degree can be exponentially large.

\begin{example}\label{ex:eckart-young}
Fix integers $m\ge n \ge r \ge 1,$ and let $X_{m,n,r}\subset \RR^{m\times n}$ be the closed subvariety of $m\times n$ matrices whose rank is at most $r.$
There is an ``enhanced" version of the celebrated Eckart-Young theorem for the Frobenius norm (see eg.~\cite[Theorem 2.9]{bks}), which implies that
\[
\gl{\operatorname{ED}} (X_{m,n,r}) = \gr{\binom{n}{r}},
\]
and that all ED-critical points are real and may be obtained via SVD: 
\begin{equation*}
A = 
\displaystyle\sum_{i=1}^n \sigma_i \mathbf{u}_i \mathbf{v}_i^T
\quad \Rightarrow 
\quad 
\pi_{\mathcal{E}_{X_{m,n,r}}}^{-1} (A) = \left\{
\sum_{i\in S } \sigma_i  \mathbf{u}_i \mathbf{v}_i^T
\mid 
S\in \binom{[n]}{r}
\right\}.
\end{equation*}
The following result establishes
\begin{equation}\label{eq:gw-lowrank}
\botn (\pi_{\mathcal{E}_{X_{m,n,r}}}) = n ,
\end{equation}
an exponential improvement over the ED degree: for even $n,$
\[
\botn (\pi_{\mathcal{E}_{X_{m,n,n/2}}}) = n \ll \ED (X_{m,n,n/2}) = \binom{n}{n/2} \approx 4^n / \sqrt{\pi n}.
\]
Once more, the Galois width better reflects the algebraic cost of solving the nearest-point problem (eigendecomposing a $n\times n$ matrix.)

\begin{theorem}\label{thm:gal-eckart}
As a permutation group, $\Gal (\pi_{\mathcal{E}_{X_{m,n,r}}})$ is the symmetric group $S_n$ acting on $\binom{[n]}{r},$ the set of all $r$-element subsets of $[n]= \{ 1, \ldots , n\}.$
\end{theorem}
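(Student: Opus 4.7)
The plan is to identify $\Gal(\pi_{\mathcal{E}_{X_{m,n,r}}})$ as the image of the full symmetric group $S_n$ acting on $\binom{[n]}{r}$, via a two-step field-theoretic argument. The case $r = n$ is trivial (the fiber is a single point), so I assume $1 \le r \le n-1$. Set $K = \CC(a_{ij})$, the function field of the parameter space $\CC^{m\times n}$, and let $L/K$ be the splitting field of $\det(\lambda I - A^T A) \in K[\lambda]$, so that $L$ adjoins the squared singular values $\sigma_1^2, \ldots, \sigma_n^2$ to $K$. The two halves of the argument are: embed $\CC(\mathcal{E}_{X_{m,n,r}})$ into $L$ in a controlled way, then show $\Gal(L/K) = S_n$.

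For the embedding, for generic $A$ the eigenvalues $\sigma_i^2$ of $A^T A$ are distinct, so the spectral projectors
\[
P_i := \displaystyle\prod_{j \ne i} \frac{A^T A - \sigma_j^2 I}{\sigma_i^2 - \sigma_j^2} \in L^{n\times n}
\]
are well-defined, pairwise orthogonal, and sum to the identity. In a bilinear-orthonormal eigenbasis $\{v_i\}$ (which exists generically by non-isotropy of $v_i$) one has $P_i = v_i v_i^T$, and combining with $A v_i = \sigma_i u_i$ from the SVD yields $E_S = A \sum_{i \in S} P_i$ for every $S \in \binom{[n]}{r}$. Thus $\CC(\mathcal{E}_{X_{m,n,r}}) = K(E_S) \subset L$. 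Any Galois element $\pi \in \Gal(L/K) \le S_n$ acts on $\{\sigma_i^2\}$ as the permutation $\pi$, so it sends $P_i \mapsto P_{\pi(i)}$ (manifestly from the rational formula above), and hence $E_S \mapsto E_{\pi(S)}$. This shows $\Gal(L/K)$ acts on the fiber $\{E_S\}$ through the natural action of $S_n$ on $\binom{[n]}{r}$.

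For the equality $\Gal(L/K) = S_n$, I would invoke the classical fact that the generic monic polynomial of degree $n$ has Galois group $S_n$. The coefficients of $\det(\lambda I - A^T A)$ are algebraically independent in $K$---witnessed by restricting to diagonal matrices, where they specialize to elementary symmetric functions in $a_{11}^2, \ldots, a_{nn}^2$---so $K$ contains a purely transcendental subfield $\CC(c_1, \ldots, c_n)$ over which the characteristic polynomial is generic and has splitting field with Galois group $S_n$. Linear disjointness of a finite Galois extension from a purely transcendental extension over their common base yields $\Gal(L/K) = S_n$. Finally, since the stabilizer of $E_{\{1,\ldots,r\}}$ in $S_n$ is $S_r \times S_{n-r}$, whose core in $S_n$ is trivial for $1 \le r \le n-1$ (no nontrivial permutation preserves every $r$-subset), the Galois closure of $K(E_S)/K$ in $L$ equals $L$ itself. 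Combined with the preceding paragraph, this gives $\Gal(\pi_{\mathcal{E}_{X_{m,n,r}}}) = S_n$ in its natural action on $\binom{[n]}{r}$.

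The main technical hurdle is the clean field-theoretic description of critical points over $\CC$, where the real SVD machinery is complicated by potentially isotropic eigenvectors of $A^T A$. This is circumvented using the spectral projectors $P_i$, which are rational in $A^T A$ and the $\sigma_j^2$ and thus manifestly live in $L$ without any normalization choice. A routine secondary step is the algebraic independence of the coefficients of $\det(\lambda I - A^T A)$ in $K$, which is checked on the diagonal locus.
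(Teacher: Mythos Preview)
Your approach is genuinely different from the paper's. The paper argues by monodromy: for $r=1$ it inducts on $n$, degenerating $A$ to a block-diagonal matrix $\left[\begin{smallmatrix}\sigma_1 & 0\\ 0 & A'\end{smallmatrix}\right]$ so that the inductive hypothesis produces $S_{n-1}\times S_1$ inside $\Gal(\pi_{\mathcal{E}_{X_{m,n,1}}})$, and then invokes \Cref{lem:stabilizer-better} to conclude $S_n$. For $r>1$ it passes through the Galois closure $\hat{\mathcal E}_{X_{m,n,1}}$ and observes that the rank-$r$ cover is a quotient. Your argument instead works entirely in the splitting field $L$ of $\det(\lambda I - A^{T}A)$: the spectral-projector identity $E_S = A\sum_{i\in S}P_i$ is a clean, uniform-in-$r$ way to see that every critical point lies in $L$ and that the Galois action is the natural $S_n$-action on $\binom{[n]}{r}$. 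The core-free observation about $S_r\times S_{n-r}$ then identifies the Galois closure of $K(E_S)/K$ with $L$. This half of your proof is correct and arguably more transparent than the paper's reduction to $r=1$.

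The gap is in your argument that $\Gal(L/K)=S_n$. You implicitly assume $K/F$ is purely transcendental (so that the finite Galois extension $M/F$ is linearly disjoint from $K$), but this fails already for $m=n$: there $\det(A)\in K$ satisfies $\det(A)^2=\det(A^{T}A)=c_n\in F$, yet $\det(A)\notin F$ (for $n=2$, evaluate at $A=\mathrm{diag}(1,1)$ and $A=\mathrm{diag}(1,-1)$ to see that $\det(A)$ cannot be a rational function of $c_1,c_2$). So $F$ is not algebraically closed in $K$, and linear disjointness of $M$ and $K$ over $F$ does not follow from your stated reason. The conclusion $\Gal(L/K)=S_n$ is still true, but establishing it requires showing $M\cap K=F$ directly (equivalently, that no nontrivial Galois-stable subfield of $M$ embeds in $K$), or else supplying an independent monodromy computation---which is exactly what the paper's block-diagonal induction does. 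Once that step is secured, the rest of your argument goes through.
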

\begin{proof}
Consider first the case $r=1,$ which we prove by induction on $n.$
The base case  $n=1$ is immediate.
Suppose the result result holds for matrices with fewer than $n$ columns.
Fix $A'$ to be a real matrix of size $(m-1)\times (n-1)$ with distinct singular values. 
Let $\sigma_1$ be a real number greater than the singular values of $A'$, and set
\[
A = \begin{bmatrix}
    \sigma_1 & 0 \\
    0 & A'
\end{bmatrix} \in \CC^{m\times n}.
\]
Our inductive hypothesis provides loops based at $A$ which generate a subgroup $S_{n-1} \times S_1 \subset \Gal (\pi_{\mathcal{E}_{m,n,r}})$.
\Cref{lem:stabilizer-better} gives $\Gal (\pi_{\mathcal{E}_{m,n,r}}) = S_n.$

For $r>1,$ consider the Galois closure (eq.~\eqref{eq:galclo}) for the rank-1 case, 
\begin{equation}
\hat{\mathcal{E}}_{X_{m,n,1}} = \left( \mathcal{E}_{X_{m,n,1}}\right)_{\CC^{m\times n}}^{(n)} .
\end{equation}
The map $\hat{\pi}:\hat{\mathcal{E}}_{X_{m,n,1}} \to \CC^{m\times n}$ factors through $\pi_{\mathcal{E}_{X_{m,n,r}}}$, for instance~via the map that sends an ordered $n$-tuple of rank-1 matrices to the sum of the first $r$ matrices. 
Thus, the monodromy action on $\mathcal{E}_{X_{m,n,r}}$ is induced by that of the group $\Gal (\hat{\pi}),$ which is $S_n$ by the $r=1$ case. 
\end{proof}
\end{example}
We point out that in~\Cref{ex:solid-rev} the number of real critical points for generic real-valued data may vary, whereas in~\Cref{ex:ed-essential,ex:eckart-young} they are always real.
Overall, it is an interesting challenge to determine the Galois widths of nearest-point problems on varieties.
The same is true for numerous variations of this problem found in the literature which arise from weighted nearest-point problems~\cite{ottaviani}, $p$-norm optimization~\cite{kubjas2021algebraic}, or ED degrees with respect to general inner products~\cite{kozhasov2023minimal}.
All of these problems correspond to branched covers, and it is interesting to ask when their Galois/monodromy group are full-symmetric.
In the case of weighted ED degrees, the surface $X$ from~\Cref{ex:solid-rev} again provides the answer ``not always."
To minimize the weighted distance function
\begin{equation}\label{eq:npp-surf-w}
d (x ; u, w ) = w_1(x_1-u_1)^2 + w_2(x_2-u_2)^2 + w_3(x_3-u_3)^2 .
\end{equation}
over $X$ from~\Cref{ex:solid-rev}, solving the critical equations again reduces to the case of two touching cuspidal cubics,
\begin{equation}\label{eq:cusps-w}
x_3^2 = \pm  \sqrt{\left( 1 + \left((w_2u_2) / (w_1u_1)\right)^2\right)} x_1^3 ,
\end{equation}
and once more we find s $8$ critical points of Galois width $4$.
On the other hand, for the case of low-rank matrix approximation, the generic weighted ED jumps dramatically---see eg.~\cite[eq.~3.11]{kozhasov2023minimal}.
It seems plausible in this case that the Galois/monodromy group is not merely symmetric (as in~\Cref{thm:gal-eckart}), but full-symmetric.

\subsection{Algebraic Statistics}\label{subsec:astat}
% ML degree
% \url{https://arxiv.org/pdf/1210.0198}

% DO: ML DEGREE OF LINEAR SPACE, RANK-CONSTRAINED (GENERAL AND SYMMETRIC), METHOD OF MOMENTS, MENTION PARTICLE PHYSICS
In this section, we further motivate the Galois width with applications to statistical parameter estimation,  one of the primary topics in the active field of algebraic statistics~\cite{sullivant}.
Our discussion is brief, and partly expository---for the experts, we highlight~\Cref{conj:mle,conj:mom}.

Much like the ED degree, the Maximum-Likelihood (ML) degree~\cite{hosten,catanese} bounds the algebraic complexity of minimizing the log-likelihood
\[
\ell (p; u) = \displaystyle\sum_{i=0}^n u_i \log (p_i),
\]
where $u$ is a given data vector, over points $[p_0 : \cdots : p_n]$ of an irreducible projective variety $ X\subset \PP^n$ which lie in the affine chart $\sum_{i=0}^n p_i = 1$ and satisfy $p_i \ne 0$ for all $i.$
To construct the associated branched cover, consider first the vanishing ideal $\mathcal{I}_X = \langle g_1, \ldots , g_k \rangle $ and the rational map
\begin{align*}
    \Psi_X : X \setminus \left( X^{\text{sing}} \cup \mathcal{H}_n \right) \times \PP^{k} &\to \PP^n \\
    \left([p_0 : \cdots : p_n], [\lambda_0 : \cdots : \lambda_{k+1}]\right) &\mapsto 
    \left[ \lambda_0 + \sum_{i=1}^k \lambda_i p_i \displaystyle\frac{d g_i}{d p_j} (p; u) \mid 0\le j \le n \right] ,
\end{align*}
where $\mathcal{H}_n = \VV \left(p_0 \cdots p_n \cdot \left( \sum p_i \right) \right)$ is the union of the $n+2$ hyperplanes.
The image of the graph of $\Psi_X$ under the coordinate projection
\begin{align*}
\operatorname{Gr} (\Psi_X ) \to \PP^n \times \PP^n , \quad 
(p, \lambda , u) \mapsto (u, p)
\end{align*}
is known as the \emph{likelihood correspondence} $\mathcal{L}_X$. When the projection
\begin{align*}
\pi_{\mathcal{L}_X} : \mathcal{L}_X &\to \PP^n, \quad 
(u, p) \mapsto u
\end{align*}
is a branched cover, its degree, $\ML (X) := \deg (\pi_{\mathcal{L}_X} )$, is known as the \emph{ML degree} of $X.$
For generic $u,$ this number counts the complex-valued critical points $p \in X \setminus \left( X^{\text{sing}} \cup \mathcal{H}_n \right)$ for the log-likelihood $\ell (p ; u )$.
\begin{example}\label{ex:ml-linear}
If $V \subset \PP^n$ is a generic linear space of codimension $r,$ then
\begin{equation}
    \ML (V) = \binom{n}{r},
\end{equation}
as shown in~\cite[Theorem 13]{catanese}, which states furthermore that for real data all critical points are real.
Although the number of (real) critical points for this problem and low-rank matrix approximation from~\Cref{ex:eckart-young} coincide, 
Galois widths imply that this problem is much harder.
For example, if we take $(n,r)=(4,2),$ low-rank approximation has Galois width $4 < 6 = \binom{4}{2}.$ In contrast, this problem for generic $V\in \operatorname{Gr} ( \PP^2 , \PP^4)$ has Galois width $6$ and cannot be solved in radicals.
We may take
\begin{align*}
u = \begin{bmatrix}
6&
4&
5&
3&
12
\end{bmatrix}
, \quad 
&V = \VV (
\ell_1 (p) , \ell_2 (p)
), \quad \text{where}\\
\ell_1 (p) = p_1+p_2+2p_3-5p_4-2p_0,  \quad 
&\ell_2 (p) = 3p_1+2p_2-p_3-3p_4+4p_0.
\end{align*}
Solutions to this ML problem are the points on $V \setminus \mathcal{H}_4$ where the matrix
\[
\begin{bmatrix}
u_0 & u_1 & u_2 & u_3 & u_4 \\
p_0 & p_1 & p_2 & p_3 & p_4 \\
p_0 \cdot \displaystyle\frac{\partial \ell_1}{\partial p_0} & p_1 \cdot \displaystyle\frac{\partial \ell_1}{\partial p_1} &
p_2 \cdot \displaystyle\frac{\partial \ell_1}{\partial p_2} &
p_3 \cdot  \displaystyle\frac{\partial \ell_1}{\partial p_3} &
p_4 \cdot  \displaystyle\frac{\partial \ell_1}{\partial p_4}\\
p_0 \cdot \displaystyle\frac{\partial \ell_2}{\partial p_0} & p_1 \cdot \displaystyle\frac{\partial \ell_2}{\partial p_1} &
p_2 \cdot  \displaystyle\frac{\partial \ell_2}{\partial p_2} &
p_3 \cdot  \displaystyle\frac{\partial \ell_2}{\partial p_3} &
p_4 \cdot  \displaystyle\frac{\partial \ell_2}{\partial p_4}
\end{bmatrix}
\]
drops rank.
With a little help from Macaulay2~\cite{M2}, we may obtain a homogenized primitive element
\begin{align*}
f = &1\,126\,944\,p_{0}^{6}+4\,127\,767\,p_{0}^{5}p_{1}-2\,310\,180\,p_{0}^{4}p_{1}^{2}-2\,105\,058\,p_{0}^{3}p_{1}^{3}\\
&+434\,717\,p_{0}^{2}p_{1}^{4}+170\,397\,p_{0}p_{1}^{5}+5\,670\,p_{1}^{6},
\end{align*}
and check that $\Gal (f) = S_6.$
This follows after
reducing $f$ modulo the primes $29$ and $141$ and factoring, thereby showing that $\Gal (f)$ contains Frobenius elements which are, respectively, a $5$-cycle and a transposition.
\end{example}
Although we expect the Galois groups for maximum-likelihood in many cases to be full-symmetric like in~\Cref{ex:ml-linear}, this is not always the case.
\begin{example}\label{ex:rank-mle}
Consider the projectivization $Y_{m,n,r}=\PP (X_{m,n,r})$ of the variety of $m\times n$ rank $\le r$ matrices from~\Cref{ex:eckart-young}.
As before, we assume $r\le n \le m.$
Using numerical homotopy continuation methods, the values $\ML (Y_{m,n,r})$ for $m\le 6$ were completely tabulated in~\cite{rank-mle}.
An analogous table produced ML degrees for rank-constrained symmetric matrices,
\begin{equation}
Z_{n,r} = \left\{ p \in \PP^{(n+2)(n-1)/2} \mid \operatorname{rank} 
\begin{bmatrix}
    2 p_{1,1} & p_{1,2} & \cdots & p_{1,n} \\
    p_{1,2} & 2 p_{2,2} & \cdots & p_{2,n}  \\
    \vdots & \vdots & \ddots & \vdots \\
    p_{1,n} & p_{2,n} & \cdots & p_{n,n}
\end{bmatrix}
\le r 
\right\},
\end{equation}
for $1\le r \le n\le 6,$ except the missing cases $\ML (Z_{6,3}) = \ML (Z_{6,4}) = 68774$ announced in~\cite[Figure 15]{ugen}.
For the particular case of $Z_{3,2}$, it was noticed in~\cite[Proposition 1]{rank-mle} that the Galois group was $S_4$ acting on $6=\binom{4}{2}$ solutions corresponding to pairs $1\le i < j \le 4.$ 
As a consequence, the maximum-likelihood estimate in this case is solvable in radicals.
This fact can be deduced immediately from the existence of an order-2 deck transformation for the branched cover $\pi_{\mathcal{L}_{Z_{3,2}}}$. If we set
\[
P = \left[\!\begin{array}{ccc}
     2\,p_{1,1}&p_{1,2}&p_{1,3}\\
     p_{1,2}&2\,p_{2,2}&p_{2,3}\\
     p_{1,3}&p_{2,3}&2\,p_{3,3}
     \end{array}\!\right],
     \quad 
     U = \left[\!\begin{array}{ccc}
     2\,u_{1,1}&u_{1,2}&u_{1,3}\\
     u_{1,2}&2\,u_{2,2}&u_{2,3}\\
     u_{1,3}&u_{2,3}&2\,u_{3,3}
     \end{array}\!\right],
\]
\[
D (U) = 
\operatorname{diag} \left(2u_{1,1} + u_{1,2}+ u_{1,3},
u_{2,1} + 2u_{2,2}+ u_{2,3},
u_{3,1} + u_{3,2}+ 2u_{3,3}
\right),
\]
then this deck transformation may be given explicitly as
\begin{align*}
\Psi : \mathcal{L}_{Z_{4,2}} &\dashrightarrow 
\mathcal{L}_{Z_{4,2}} \\
(U, P) &\mapsto 
\left( U , \, \,  \displaystyle\frac{32}{(\operatorname{tr} (U))^3}  \cdot D(U) \cdot 
\begin{bmatrix}
\displaystyle\frac{u_{1,1}}{p_{1,1}} & 
\displaystyle\frac{u_{1,2}}{p_{1,2}} & 
\displaystyle\frac{u_{1,3}}{p_{1,3}} \\[1em]
\displaystyle\frac{u_{1,2}}{p_{1,2}} & 
\displaystyle\frac{u_{2,2}}{p_{2,2}} & 
\displaystyle\frac{u_{2,3}}{p_{2,3}} \\[1em]
\displaystyle\frac{u_{1,3}}{p_{1,3}} & 
\displaystyle\frac{u_{2,3}}{p_{2,3}} & 
\displaystyle\frac{u_{3,3}}{p_{3,3}}
\end{bmatrix}
\cdot 
D(U)
\right).
\end{align*}
This is a special case of a maximum-likelihood duality result due to Draisma and Rodriguez, which has an analogue for rectangular, non-symmetric matrices (\cite[Theorems 11 and 6, respectively]{MLDUALITY}).
In general, when $n$ is odd and $r = (n+1)/2,$ these duality results specialize to formulas for deck transformations like $\Psi $ given above.

Based on this discussion, we propose the following conjecture.
\end{example}

\begin{conjecture}\label{conj:mle}
For any integers $1\le r \le n \le m,$ we have
\begin{align*}
\botn \left(\pi_{\mathcal{L}_{X_{m,n,r}}}  \right) 
&=
\begin{cases}
\ML (Y_{m,n,r})/2 & \text{for } n \text{ odd, } r=(n+1)/2,\\
\ML (Y_{m,n,r}) & \text{else,}
\end{cases}
\\
\botn \left( \pi_{\mathcal{L}_{Z_{n,r}}} \right) 
&=
\begin{cases}
\ML (Z_{n,r})/2 & \text{for } n \text{ odd, } r=(n+1)/2,\\
\ML (Z_{,r}) & \text{else.}
\end{cases}
\end{align*}
\end{conjecture}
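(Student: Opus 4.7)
The plan is to prove \Cref{conj:mle} by splitting into cases based on whether the duality hypothesis ($n$ odd, $r=(n+1)/2$) holds, and in each case combining the decomposition principle of \Cref{prop:decomp-width} with an analysis of the monodromy action. In the duality case, the Draisma--Rodriguez theorem (\cite[Theorems 6 and 11]{MLDUALITY}) furnishes an explicit order-two deck transformation $\Psi$ on $\mathcal{L}_{X_{m,n,r}}$ (respectively $\mathcal{L}_{Z_{n,r}}$), analogous to the map displayed in \Cref{ex:rank-mle}. This yields a decomposition $\pi_{\mathcal{L}_{\bullet}} = \pi_2 \circ \pi_1$, with $\pi_1$ the quotient by $\langle \Psi \rangle$ of degree $2$ and $\pi_2$ of degree $\ML(\bullet)/2$. \Cref{prop:decomp-width} then gives $\botn(\pi_{\mathcal{L}_{\bullet}}) = \max(2, \botn(\pi_2))$, so it suffices to show that $\Gal(\pi_2)$ is the full symmetric group on $\ML(\bullet)/2$ letters. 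In the non-duality case, the task is to show $\Gal(\pi_{\mathcal{L}_{\bullet}}) = S_{\ML(\bullet)}$ directly.

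For both tasks, I would proceed by induction on $(m, n)$, modeled on the proof of \Cref{thm:gal-eckart} for the Eckart--Young setting. Base cases---such as those tabulated in \cite{rank-mle,ugen}---can be certified either by numerical monodromy or, following \Cref{ex:ml-linear}, symbolically via Frobenius-element computations after specializing to a rational data point. For the inductive step, one would choose a specialization $u$ of block-diagonal type so that a subset of the critical equations decouples and the monodromy group provably contains a subgroup of the form $S_k \times S_{N-k}$ with $1 \le k < N/2$, where $N = \ML(\bullet)$ or $\ML(\bullet)/2$ as appropriate; \Cref{lem:stabilizer-better} then upgrades this to full symmetry. Transpositions, needed to force the group to equal the full symmetric group rather than stopping at a proper primitive subgroup, may be produced by monodromy loops around branch points where exactly two critical values collide to first order.

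The main obstacle will be the duality case, where the residual cover $\pi_2$ does not itself arise as the ML cover of a simpler variety: it lives on the quotient $\mathcal{L}_{\bullet}/\langle \Psi \rangle$, whose defining equations are not directly accessible from the ambient parametrization. Showing that $\pi_2$ admits no further deck transformations or decompositions beyond the Draisma--Rodriguez involution seems to require genuinely new structural input about the likelihood geometry of rank varieties. Compounding this, the rapid growth of $\ML(\bullet)$---for example $\ML(Z_{6,3}) = 68774$---makes direct numerical verification feasible only for the smallest cases, so the inductive step must be executed in a uniform, rather than case-by-case, fashion.
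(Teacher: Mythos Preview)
The statement you are attempting to prove is presented in the paper as an open \emph{conjecture}; the paper offers no proof, only the supporting discussion in \Cref{ex:rank-mle} and the Draisma--Rodriguez duality that motivates the halving in the odd-$n$, $r=(n+1)/2$ case. There is therefore no ``paper's own proof'' to compare against, and your proposal should be read as a proof \emph{strategy} for an open problem rather than a verification of something already established.

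As a strategy, the outline is reasonable but contains genuine gaps that you yourself partially identify. The most serious is the inductive step: the block-diagonal specialization that drives the proof of \Cref{thm:gal-eckart} works because ED-critical points for low-rank approximation are explicit rational functions of the singular-value data, so restricting to a block forces a clean splitting of the fiber. No analogous structure is known for ML-critical points of $Y_{m,n,r}$ or $Z_{n,r}$; there is no reason to expect that specializing $u$ to block-diagonal form makes the likelihood equations decouple, nor that the resulting degeneration stays inside the locus where the fiber has the generic cardinality. Without this, you cannot produce the required $S_k \times S_{N-k}$ subgroup, and \Cref{lem:stabilizer-better} never applies. The second gap, which you flag explicitly, is that in the duality case the quotient cover $\pi_2$ has no intrinsic description as an ML cover, so even granting an inductive mechanism for the undualized problems, it does not transfer. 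Finally, the ``base cases'' are not actually established in your proposal: pointing to numerical monodromy or Frobenius computations that \emph{could} be run is not the same as running them, and for the larger tabulated cases (e.g.\ $\ML(Z_{6,3})=68774$) neither method is currently feasible as a rigorous certification. In short, the proposal is a plausible research plan, not a proof, and the conjecture remains open.
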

We note that deck transformations in maximum-likelihood may also arise due to label-swapping symmetries---see eg.~\cite[\S 5]{simon}.
We next recall how such symmetries arise in another statistical parameter estimation problem: the method-of-moments for mixture models.
For concreteness, we consider the classical setting, pioneered by Pearson over a century ago~\cite{pearson1894contributions}, of univariate Gaussian mixture models.

For a mixture of $k$ Gaussians $\mathcal{N}(\mu_1, \sigma_1), \, \ldots , \, \mathcal{N} (\mu_k , \sigma_k)$, the moments $\left(M_{i,k}(\mu, \sigma , \lambda )\right)_{i\ge 0}$ are a sequence of polynomials depending on the mean parameters $\mu = (\mu_1, \ldots , \mu_k)$, standard deviations $\sigma = (\sigma_1, \ldots , \sigma_k),$ and mixing coefficients $\lambda = (\lambda_1, \ldots , \lambda_{k-1}).$ 
These polynomials may be computed recursively by the rules
\begin{align}
&M_{i,k} (\mu , \sigma , \lambda ) = \lambda_1 m_i (\mu_1, \sigma_1) + \ldots + (1-\lambda_1 - \ldots -\lambda_{k-1}) m_i (\mu_k , \sigma_k) ,
\nonumber \\
&m_i (\mu , \sigma) = \begin{cases}
1  &i = 0,\\
\mu \, m_{i-1} (\mu , \sigma ) +  \sigma^2 \displaystyle\frac{d}{d\mu } m_{i-1} (\mu , \sigma ) &i>0.
\end{cases}
\label{eq:gauss-moments}
\end{align}
The method of moments hypothesizes that data $x_1, \ldots , x_n \in \RR$ are samples drawn from a mixture of Gaussians with unknown parameters $(\mu , \sigma , \lambda ).$
To recover these parameters, a sequence of \emph{sample moments} is computed,
\[
s_i = (1/n) (x_1^i + \ldots  + x_n^i),
\quad 
1\le i \le e,
\]
and one may expect $M_{i,k} (\mu , \sigma , \lambda ) \approx s_i $ for all $i$ if the sample size $n$ is large enough.
Thus, we are led naturally to consider the map
\begin{align}
\pi_{e,k} : \CC^{3k-1} \to \CC^e, \quad (\mu, \sigma , \lambda ) \mapsto \left( M_i (\mu , \sigma , \lambda ) \mid 1 \le i \le e \right).
\end{align}

Solving the moment equations $s_i - M_{i,k} (\mu, \sigma , \lambda )$ for $i=1,\ldots , e$ may be understood as inverting the map $\pi_{e,k}.$
An algebro-geometric study of these maps was initiated in the work~\cite{moment-varieties}, which also posed a conjecture consisting of three parts (cf.~\cite[Conjecture 3]{moment-varieties}):
\begin{enumerate}
    \item (\emph{Algebraic identifiability}) The map $\pi_{3k-1, k}$ is a branched cover. In other words, $3k-1$ moments from a generic Gaussian mixture model suffice to recover the parameters up to finitely-many possibilities. This was later settled in the affirmative~\cite{mom-identifiability}, where it was also discovered that generalizations to multivariate Gaussians may fail.
    \item (\emph{Rational identifiability}) The map $\pi_{3k, k}$ is generically a degree $k!$ (Galois) branched cover over its image.
    In other words, $3k$ \emph{exact} moments suffice to recover the parameters of the mixture model up to the label-swapping action by $S_k.$
    This remains open, although the same result for $3k+2$ moments is established in~\cite{lindberg2025estimating}, with analogous results when some parameters are known or known to be equal.
    \item (\emph{Degree of algebraic identifiability}) The authors in~\cite{moment-varieties} made a ``wild guess" that $\deg (\pi_{3k,k})= k! \left((2k-1)!!\right)^2$ based on computations for $k=2,3,$ but numerical results for the case $k=4$ in~\cite{amendola2016solving} provide strong evidence that this is incorrect.
    Although a degree formula remains elusive, we may loosely interpret the spirit of their conjecture to mean that the moment equations have no ``hidden symmetries."
\end{enumerate}

 \begin{conjecture}\label{conj:mom}
For all $k\ge 1,$ $\botn \left( \pi_{3k-1, k} \right) = \deg (\pi_{3k-1, k} ) / k! $
\end{conjecture}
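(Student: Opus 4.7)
The plan is to exploit the label-swapping symmetry to split $\pi_{3k-1,k}$ into a Galois piece of degree $k!$ and a residual piece of degree $d := \deg(\pi_{3k-1,k})/k!$, then reduce the conjecture to a ``no hidden symmetries'' statement for the residual piece. First I would observe that $S_k$ acts on the parameter space $\CC^{3k-1}$ by simultaneously permuting the indices of $\mu_i$, $\sigma_i$, and $\lambda_i$ (with $\lambda_k = 1 - \lambda_1 - \cdots - \lambda_{k-1}$). The symmetric form of the recursion~\eqref{eq:gauss-moments} shows that every moment polynomial $M_{i,k}$ is invariant under this action, and the action is generically free on fibers of $\pi_{3k-1,k}$ since the pairs $(\mu_i,\sigma_i)$ are generically distinct. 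Hence $S_k \le \Deck(\pi_{3k-1,k})$, and taking $\pi_1 \colon \CC^{3k-1} \dashrightarrow \CC^{3k-1}/S_k$ together with the induced $\pi_2 \colon \CC^{3k-1}/S_k \dashrightarrow \CC^{3k-1}$ yields a decomposition $\pi_{3k-1,k} = \pi_2 \circ \pi_1$ in the sense of~\Cref{def:deck-decomposable}.

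Applying~\Cref{prop:decomp-width} gives
\[
\botn(\pi_{3k-1,k}) = \max\bigl(\botn(\pi_1),\, \botn(\pi_2)\bigr).
\]
The cover $\pi_1$ is Galois with group $S_k$, so~\eqref{eq:bot-sn-an} yields $\botn(\pi_1) \le k$, while $\botn(\pi_2) \le d$ trivially. The degree tabulations in~\cite{moment-varieties,amendola2016solving} give $d \ge k$ once $k \ge 2$, so the conjecture reduces to the equality $\botn(\pi_2) = d$. By~\Cref{thm:compute-bot} and~\eqref{eq:bot-sn-an}, this in turn amounts to showing that $\Gal(\pi_2)$ is either the full-symmetric group $S_d$ or the full-alternating group $A_d$ with $d \ge 5$.

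My strategy for this Galois-group step is twofold. First, I would prove that $\Gal(\pi_2)$ acts primitively on the $d$ label-swap orbits. Transitivity is automatic because $\CC^{3k-1}/S_k$ is irreducible, whereas primitivity is exactly the ``no hidden symmetries'' content of item (3) preceding~\Cref{conj:mom}: it demands ruling out any nontrivial intermediate cover between $\pi_2$ and the identity on moment space beyond those already accounted for by the $S_k$-quotient. Second, I would exhibit a transposition in $\Gal(\pi_2)$, either by specializing the parameters so that precisely two orbits coalesce along a simple branch point (a ``simple discriminant'' degeneration), or by Frobenius-element computations modulo well-chosen primes combined with~\Cref{cor:hit}, as illustrated by~\Cref{ex:ml-linear}. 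Jordan's classical theorem---that a primitive permutation group containing a transposition is the full symmetric group---then forces $\Gal(\pi_2) = S_d$.

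The main obstacle will be establishing primitivity in full generality. Even $\deg(\pi_{3k-1,k})$ is unknown in closed form beyond small $k$, and a systematic description of the intermediate covers of $\pi_{3k-1,k}$ is currently out of reach. For $k \in \{2,3\}$ direct numerical monodromy (in the style of~\cite{DUFF-MONODROMY,breiding-timme}) should produce a matching lower bound on $\botn(\pi_2)$ and thereby confirm the conjecture. For general $k$, however, progress will likely require a structural refinement of the identifiability results of~\cite{mom-identifiability,lindberg2025estimating} capable of detecting---and excluding---block systems on the fibers of $\pi_2$.
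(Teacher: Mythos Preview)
The statement you are addressing is labeled and presented in the paper as an open \emph{conjecture}; the paper gives no proof, only the heuristic motivation drawn from label-swapping symmetries and the degree computations in~\cite{moment-varieties,amendola2016solving}. There is thus no proof on the paper's side to compare against.

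Your proposal is a coherent \emph{strategy}, and you are candid that it is not a complete argument. The decomposition $\pi_{3k-1,k} = \pi_2 \circ \pi_1$ through the $S_k$-quotient is correct, and combining it with~\Cref{prop:decomp-width} legitimately reduces the conjecture to $\botn(\pi_2) = d$, provided $d \ge \botn(S_k)$. Two minor points: first, the inequality $d \ge k$ is only verified by the tabulations you cite for small $k$, so for general $k$ this is an additional (though presumably mild) lemma still to be supplied; second, your claim that $\botn(\pi_2) = d$ ``amounts to'' $\Gal(\pi_2) \in \{S_d, A_d\}$ overstates things---full-symmetric or full-alternating is sufficient but not necessary, since by~\Cref{thm:compute-bot} any $\Gal(\pi_2)$ possessing a composition factor of minimal permutation degree $d$ would do. Your chosen route via primitivity plus a transposition (Jordan's theorem) is nonetheless a standard and sound sufficient condition.

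The substantive gap you yourself name---establishing primitivity of $\Gal(\pi_2)$ for all $k$---is exactly why the paper leaves this open. Neither a closed form for $\deg(\pi_{3k-1,k})$ nor a classification of intermediate covers is currently available, and your assessment that progress would require sharpening the identifiability results of~\cite{mom-identifiability,lindberg2025estimating} matches the state of the literature as the paper describes it.
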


In summary, the Galois width provides a novel tool for studying the algebraic complexity of statistical parameter estimation problems. We feel optimistic that it will find further applications in the study of identifiability questions and in concrete parameter estimation scenarios.

\subsection{Algebraic Vision}\label{subsec:avis}

The computational model studied in~\Cref{sec:bottleneck} originates from work of Hartley, Nist\'{e}r, and Stew\'{e}nius~\cite{hartley}, who pioneered the application of Galois theory to the study of 3D reconstruction and related problems in computer vision.
More specifically, their paper outlined a framework based on symbolic computation that could be applied to estimation problems with Galois groups that are full-symmetric/alternating.
In many respects, our work is an extension of theirs: we have characterized complexity in our model for any finite group, and can obtain lower bounds on this complexity using more practical numerical heuristics.

On the other hand, there are some notable differences between the setup of~\cite{hartley} and ours; for example, their work treats extracting $n$-th roots in the working field as a unit-cost operation, and possibly additional oracles like the SVD of an arbitrarily-large matrix with real entries.
It would certainly be worthwhile to extend the computational model of~\Cref{sec:bottleneck} to such settings, but this is beyond our current scope.
Rather, we see a discussion of motivating problems from computer vision as a fitting end to this paper.
Although we can only offer a glimpse of these problems and their practical importance, we recommend the recent survey of Kileel and Kohn~\cite{kileel2022snapshot} as one entry point into the expanding field of \emph{algebraic vision}.

Interest in algebraic complexity from the computer vision community stems largely (but not exclusively) from the use of \emph{minimal solvers}~\cite{DBLP:journals/pami/KukelovaBP12,DBLP:conf/cvpr/LarssonAO17} in outlier-robust estimation schemes such as RanSaC (``Random Sampling and Consensus")~\cite{fischler1981random}.
The rough idea behind these methods is this: unknown quantities are estimated multiple times using random subsamples of the data, and these estimates are aggregated so as to filter outliers and produce a more robust estimate.
To minimize the influence of outliers, the subsamples should ideally be as small as possible (``minimal").
Algebraic methods such as Gr\"{o}bner bases are currently the favored approach for constructing minimal solvers. 
The Galois width measures a fundamental limit of what degree of algebraic simplification these solvers can achieve. 

Aside from minimal problems, the algebraic complexity of nonlinear least-squares problems in vision has also attracted significant interest. We revisit the problem of $\ell_2$-optimal triangulation in~\Cref{ex:triangulation} below.

In this paper, we adopt the point of view that estimation problems in vision, minimal or not, may be profitably studied via branched covers (eg.~the \emph{joint camera map} of~\cite{plmp-duff}.)
The Galois width provides a general tool for measuring both the difficulty of these problems and optimality of proposed solutions which is more refined than the degrees of these branched covers.
If the Galois width of a problem is large, then it is truly \emph{hard} in the sense of~\cite{hruby2023four}.
If, however, the Galois width is smaller than the degree, then there is an opportunity for a more efficient algebraic solution.

\begin{example}[Triangulation]\label{ex:triangulation} 
The standard model for the pinhole camera is a $3\times 4$ matrix $P$ of rank $3$~\cite[Ch.~6]{hartley2003multiple}.
We model a world point $X \in \PP^3$ as a $4\times 1$ vector of homogeneous coordinates, so that 
\begin{align*}
 \PP^3 &\dashrightarrow \PP^2 , \quad X \mapsto P \cdot X
\end{align*}
associates each 3D point $X \in \PP^3 \setminus \{ [\ker P]\}$ with a 2D point in $3\times 1$ homogeneous coordinates.
In practice, of course, one must work with affine pixel coordinates in the image, and hence also the rational map
\begin{align*}
\PP^3 &\dashrightarrow \CC^2 , \quad X \mapsto \Pi (P \cdot X), \quad \text{where} \quad  \Pi (x,y,z) = (x/z, y/z). 
\end{align*}
If we have an arrangement of multiple cameras $\mathcal{P}_\bullet = (P_1, \ldots , P_n),$ then one is naturally led to consider the \emph{affine multiview variety}, a variety $\mathcal{M}_{\mathcal{P}_\bullet } \subset \CC^{2n}$ obtained as the Zariski closure of the image of 
\begin{align*}
\PP^3 \dashrightarrow \CC^{2n} , \quad X \mapsto \left(
\Pi (P_1 \cdot X) , \ldots , \Pi (P_n \cdot X) \right).
\end{align*}

In the computer vision literature, the \emph{$\ell_2$-optimal triangulation problem}~\cite{hartley1997triangulation,stewenius2005hard}
is an instance of the 
Euclidean distance minimization problem, already studied in~\Cref{subsec:mag}, for which the following result is known.
\begin{theorem}\label{thm:max-rod-wang} \cite{maxrodwang} 
If $\mathcal{P}_\bullet $ is a generic arrangement of $n\ge 2$ cameras, then 
\begin{equation}\label{eq:cubic}
\ED \left( \mathcal{M}_{\mathcal{P}_\bullet } \right) = \displaystyle\frac{9}{2} n^3 - \displaystyle\frac{21}{2} n^2 + 8n - 4.
\end{equation}
\end{theorem}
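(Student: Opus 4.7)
The plan is to compute $\ED(\mathcal{M}_{\mathcal{P}_\bullet})$ by resolving the birational parametrization $\phi : \PP^3 \dashrightarrow \CC^{2n}$, $X \mapsto (\Pi(P_1 X), \ldots , \Pi(P_n X))$, and then extracting the ED degree from an intersection-theoretic count on the resolution. The multiview variety is three-dimensional, birational to $\PP^3$, and singular along loci traceable to the camera centers $C_i = [\ker P_i]$ and baselines $\overline{C_i C_j}$; the ED degree is not directly computable on $\mathcal{M}_{\mathcal{P}_\bullet}$ itself, but becomes accessible on a suitable blow-up of $\PP^3$.

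First I would construct a smooth resolution $\widetilde{M} \to \mathcal{M}_{\mathcal{P}_\bullet}$ by blowing up $\PP^3$ at the $n$ camera centers and, if needed, along the $\binom{n}{2}$ pairwise baselines, so that $\phi$ extends to a morphism. Writing the pullback of each factor $\Pi \circ P_i : \PP^3 \dashrightarrow \PP^2$ in terms of the hyperplane class $H$ on $\PP^3$ and the exceptional divisor $E_i$ over $C_i$, I would express the pulled-back affine image coordinates as sections of an explicit line bundle on $\widetilde{M}$ with compatible behavior along the baseline exceptional divisors. This reduces the problem to intersection theory on an iterated blow-up of $\PP^3$, whose Chow ring is well understood.

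Second, I would invoke the ED-degree-as-Chern-number formula from Draisma--Horobe\c{t}--Ottaviani--Sturmfels--Thomas: for a smooth subvariety $X \subset \CC^N$ in sufficiently general position relative to the isotropic quadric $Q = V(\sum y_j^2)$, one has $\ED(X) = \sum_{i=0}^{\dim X} \delta_i(X)$, a sum of polar degrees. Pulling the polar classes back to $\widetilde{M}$, I would compute the relevant intersection numbers in terms of $H$ and the exceptional classes. Since the resulting expression is a polynomial of low degree in $n$ (the parameter counting camera centers and baselines enters linearly at each level of the Chern-class calculation), matching against a handful of directly verified cases (e.g.~the classical values at $n = 2, 3, 4$ computed symbolically or by numerical homotopy) pins down the cubic $\tfrac{9}{2} n^3 - \tfrac{21}{2} n^2 + 8 n - 4$.

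The main obstacle I expect is establishing the genericity required for the polar-degree formula: the multiview variety must meet $Q$ transversally away from its singular locus, and this can fail when cameras share focal planes, when baselines pass through isotropic directions, or at other degenerate configurations. Showing that for a \emph{generic} $\mathcal{P}_\bullet$ these pathologies are absent, and that blowing up only the camera centers and baselines suffices to resolve both the indeterminacy of $\phi$ and the failures of transversality with $Q$, is the technical heart of the proof. Once this is granted, the closed-form cubic in $n$ drops out of the combinatorics of the resolution, with the coefficients $9/2$, $-21/2$, $8$, $-4$ tracking, respectively, the volume of the image under $\phi$, the correction from the $n$ camera-center blow-ups, the contribution of the $\binom{n}{2}$ baseline blow-ups, and a constant Euler-characteristic shift.
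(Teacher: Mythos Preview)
The paper does not supply a proof of this theorem at all: it is quoted as \cite{maxrodwang} and used as background for the subsequent Conjecture~\ref{conj:multiview}. There is therefore no in-paper argument to compare your proposal against.

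That said, your sketch is in the right spirit for how the cited result is actually obtained. A few remarks on the proposal itself. First, the step ``match against a handful of directly verified cases'' is not a proof unless you have independently established that the answer is a polynomial in $n$ of degree at most $3$; you gesture at this (``enters linearly at each level''), but making it precise is nontrivial and is really the content of the computation, not a shortcut around it. Second, the polar-degree formula you invoke from \cite{dhost} requires a smooth variety in general position with respect to the isotropic quadric, and $\mathcal{M}_{\mathcal{P}_\bullet}$ is singular; the actual argument in the cited work proceeds via Chern--Mather classes (equivalently, the Euler obstruction formulation of the ED degree for singular varieties) rather than by first passing to a smooth resolution and summing ordinary polar classes. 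Your blow-up approach could in principle be made to work, but controlling the discrepancy between the ED degree of the singular image and intersection numbers on the resolution is exactly the subtlety that the Chern--Mather machinery is designed to handle.
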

We propose a refinement of this result.
\begin{conjecture}\label{conj:multiview}
With notation as in~\Cref{thm:max-rod-wang}, the Euclidean distance cover $\pi : \mathcal{E}_{\mathcal{M}_{\mathcal{P}_\bullet }} \to \CC^{2n}
$ has full-symmetric Galois/monodromy group, and thus its Galois width is thus also given by the formula~\eqref{eq:cubic}.
\end{conjecture}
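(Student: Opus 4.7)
The plan is to reduce the conjecture to a Galois group calculation via Corollary \ref{cor:hit}. Setting $d = \tfrac{9}{2} n^3 - \tfrac{21}{2} n^2 + 8n - 4$, note that $d \ge 6$ for every $n \ge 2$, so the formula~\eqref{eq:bot-sn-an} gives $\botn(S_d) = d$. Thus, once $\Gal(\pi) = S_d$ is established, the claimed Galois width $\botn(\pi) = d$ follows immediately. It therefore suffices to argue that the Galois/monodromy group of the ED cover $\pi : \mathcal{E}_{\mathcal{M}_{\mathcal{P}_\bullet}} \to \CC^{2n}$ is the full symmetric group.

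I would attack $\Gal(\pi) = S_d$ by combining three standard ingredients. First, transitivity: since $\mathcal{E}_{\mathcal{M}_{\mathcal{P}_\bullet}}$ is the graph of a rational section of the projection to the parameter space of data points $u \in \CC^{2n}$ (a generic critical point is cut out by the first-order conditions on a 3D point $X \in \PP^3$), irreducibility of this correspondence reduces to irreducibility of an explicit incidence variety fibered over $\PP^3$; this fibration is readily seen to have irreducible fibers and an irreducible base. Second, a transposition: I would search the discriminant locus for a codimension-one stratum along which exactly two critical points collide with simple, quadratic-type ramification. Concretely, for a judiciously chosen one-parameter family of data points $u(t)$, one can engineer two of the critical 3D points to coalesce via a simple Morse-type degeneration while all other critical points stay distinct; the monodromy around this stratum is then a transposition. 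Third, primitivity: show that $\Gal(\pi)$ admits no nontrivial block system. A transitive permutation group that is primitive and contains a transposition must be $S_d$, which then completes the proof.

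The step I expect to be the main obstacle is establishing primitivity, because $d$ grows cubically in $n$ and the lattice of potential block systems is correspondingly large. One promising route is induction on $n$, comparing the $n$-view problem to the $(n-1)$-view problem via a degeneration in which the $n$-th camera center tends to infinity (or, dually, $u_n$ tends to the epipole in an $(n-1)$-view-consistent way). Under such a degeneration, the $d_n$ critical points should split into the $d_{n-1}$ old critical points plus a controllable set of ``new'' critical points attached to each old one in a way that lifts an inductively known $S_{d_{n-1}}$-action. Showing that the resulting ``permuting-old / fixing-new'' subgroup, combined with the transposition from the previous paragraph, rules out every imprimitive block system (using Lemma \ref{lem:stabilizer-better} as leverage whenever a block is small enough) is the technical heart of the argument.

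As a complementary check, I would verify the base cases $n = 2, 3$ numerically using the monodromy heuristics of \cite{DUFF-MONODROMY,jir-galois}, and confirm the results symbolically by computing Frobenius elements for a specialization to a generic rational camera arrangement as in \Cref{ex:ml-linear}. These small cases are likely to already exhibit enough cycle types (e.g.\ a transposition and a $d$-cycle, or a transposition and a $(d-1)$-cycle) to force the full symmetric group directly, providing independent evidence for the conjecture and guiding the inductive geometric analysis for general $n$.
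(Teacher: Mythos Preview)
The statement you are attempting to prove is stated in the paper as an open \emph{conjecture}, not a theorem: the paper does not supply a proof. The only evidence offered there is that the case $n=2$ was verified symbolically in~\cite{hartley}, and that the author has checked a few larger values of $n$ heuristically via numerical homotopy continuation. There is therefore no ``paper's own proof'' to compare against; any complete argument you produce would go strictly beyond what the paper claims.

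That said, what you have written is a strategy outline rather than a proof, and you essentially acknowledge this yourself. The reduction in your first paragraph (full-symmetric $\Rightarrow$ Galois width $=d$ via~\eqref{eq:bot-sn-an}) is correct and routine. The remaining steps---irreducibility of the ED correspondence, exhibiting a transposition via a Morse-type degeneration, and primitivity---are plausible lines of attack, but none is actually carried out. Your description of irreducibility is also garbled: $\mathcal{E}_{\mathcal{M}_{\mathcal{P}_\bullet}}$ is a degree-$d$ cover of $\CC^{2n}$, not ``the graph of a rational section'' of the projection to $\CC^{2n}$ (a rational section would force degree $1$; presumably you meant the projection in the other direction). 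The primitivity step, which you correctly flag as the main obstacle, has no real content beyond a vague induction-on-$n$ idea; in particular there is no mechanism explaining how the $d_n - d_{n-1}$ ``new'' critical points are organized or why no block system can survive, and Lemma~\ref{lem:stabilizer-better} does not obviously apply since the inductive subgroup you describe is not of the form $S_k \times S_{d-k}$. Your final paragraph of numerical and symbolic checks is essentially what the paper already reports. In short: the proposal identifies reasonable targets but does not close any of the gaps that make this a conjecture rather than a theorem.
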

When $n=2,$~\Cref{conj:multiview} was verified in~\cite{hartley} by direct symbolic computation; using numerical homotopy continuation methods, we have verified it (heuristically) for a few larger values of $n.$
We note that, due to different formulations of the problem, different root counts have been reported in the literature.
For example, when $n=3,$ when the ED degree equals $47,$ but~\cite[Table 2]{chien2022gpu} reports  $94 = 2 \cdot 47$ solutions; nevertheless, the branched cover implicit in their work still has Galois width $47.$

Analogously to~\cite{kozhasov2023minimal} in the case of low-rank matrix approximation, recent work of Rydell et al.~\cite{rydell25weights} shows that certain \emph{weighted} variants of $2$-view triangulation are solvable in radicals.
Although we believe the Galois width may be useful for analyzing similar problems, our framework does not address the question of how the ``best" weights might be chosen.
\end{example}

\begin{example}[Resectioning / Absolute Camera Pose]\label{ex:resectioning} 
The triangulation problem can be understood as fitting the best 3D point to $n$ observed 2D projections under known cameras.
Dually, the \emph{camera resectioning} problem requires fitting an unknown camera to $n$ known 3D-2D point correspondences.
In general, $n\ge 6$ correspondences are needed to solve the problem resectioning. ED degree conjectures for such problems, of a similar flavor to~\Cref{thm:max-rod-wang}, appear in~\cite{connelly2024algebra,duff2024metric}. 

Resectioning can be solved with $n<6$ correspondences if the camera is known to be \emph{calibrated}; that is, when $P$ is given up to scale as an element of the special Euclidean group, $P \propto (R \, \vert \, t ) \in \operatorname{SE}_3 .$
Calibrated resectioning in the minimal case of $n=3$ points is known as the \emph{Perspective-3-Point} problem, or P3P.
Early solutions to this problem date back to Lagrange in 1700s and Grunert (see~\cite{sturm2011historical} for a history), and involve computing the roots of a univariate quartic.
From~\eqref{eq:bot-sn-an}, it follows that the solutions to this problem have Galois width at most $3$, and in~\cite{hartley} it was verified that equality holds.
We note that several solutions to P3P naturally reduce to the case of a cubic equation, without using resolvents (see eg.~\cite{ding2023revisiting}.)

A Galois-theoretic study of camera resectioning with minimal combinations of point and line features was recently carried out in~\cite{gm-vision}.
Letting $p$ and $l$ be nonnegative integers with $p+l=3,$ and $\mathbb{G}_{k,m}$ denote the Grassmannian of $k$-planes in $\mathbb{P}^m,$ then a minimal solution to the resectioning problem with $p$ point-point correspondences and $l$ line-line correspondences amounts to inverting a branched cover of the form
\begin{align*}
\pi_{p,l} : 
\left( \mathbb{P} \right)^{\times p}
\left(\mathbb{G}_{1,3}\right)^{\times l} 
\times \gr{\operatorname{SE}_3} &\dashrightarrow 
\left(\PP^3 \times \gl{\PP^2} \right)^{\times p} \times \left(\mathbb{G}_{1,3} \times \mathbb{G}_{1,2} \right)^{\times l} .
\end{align*}
The main result in~\cite[\S 3]{gm-vision} implies that 
\begin{equation}\label{eq:bot-absolute-pose}
\botn (\pi_{3,0}) = \botn( \pi_{1,2}) = 3, 
\quad 
\botn (\pi_{2,1}) = 2, 
\quad 
\botn (\pi_{0,3}) = 8.
\end{equation}
Practical consequences of this result were recently realized in~\cite{hruby2024efficient}, in which the authors developed new solutions to the minimal cases with $(p,l) \in \{ (2,1), (1,2) \}$. These minimal solvers were vetted in experiments in which they outperformed the previous state-of-the-art for these problems in terms of both accuracy and speed.
Exploiting the gap between Galois width and degree was a key factor that made this possible.
\end{example}

\begin{example}[Reconstruction / Relative Camera Pose]\label{ex:resectioning} 

When both cameras and 3D points are unknown, and only 2D points are given, we are faced with the full \emph{3D reconstruction problem}, which has been extensively studied in both the uncalibrated and the calibrated cases.
In either setting, there is an inherent ambiguity due to choice of coordinates; in other words, solutions are defined up to the action of the projective group $\operatorname{PGL}_4$ or its subgroup consisting of 3D similarity transformations, respectively.
Since knowledge of calibration is often available, and necessary for obtaining metrically-accurate reconstructions, we mostly focus on this case.

In~\cite{plmp-duff}, minimal problems for reconstructing a configuration of points and lines in space were completely classified under the assumptions of calibrated cameras and complete correspondences across multiple images.
A key player in this story was the joint camera map of such a configuration, a branched cover which sends an equivalence class of solutions (configuration + cameras) to multiple 2D projections.
Recent work~\cite{kiehn2025plmp} extends this result to the uncalibrated case, and also suggests novel a novel strategy for checking that the joint camera map is decomposable in the sense of~\Cref{def:deck-decomposable}.

In the calibrated case, the simplest instance of this joint camera map arises for a configuration of five points in space. 
We define
\begin{align}
\pi_{\text{5pp}} : \left(\PP^3\right)^{\times 5} \times \mathcal{C}_{\text{5pp}} &\dashrightarrow  \left(\PP^2 \right)^{\times 10} \\ 
(X_1, \ldots , X_5, P_1 , P_2) &\mapsto \left( P_1 \cdot X_1, \ldots , P_1 \cdot X_5, P_2 \cdot X_1, \ldots , P_2 \cdot X_5\right), \nonumber
\end{align}
where $\mathcal{C}_{\text{5pp}}$ is a variety whose points represent generic pairs of calibrated cameras up to world similarity transformation, specifically
\begin{equation}\label{eq:calcam}
\mathcal{C}_{\text{5pp}} = \{
(P_1 , P_2) \in \left(\operatorname{SE}_3\right)^{\times 2} \mid 
P_1 = \begin{bmatrix} 
I \mid 0
\end{bmatrix}, \, 
P_2 = \begin{bmatrix} 
R \mid t
\end{bmatrix},
\, 
t_3=1
\}.
\end{equation}
Computing fibers of $\pi_{\text{5pp}}$ over generic data $(x_1,\ldots , x_5, y_1, \ldots , y_5)\in \left(\PP^2 \right)^{\times 10}$ is the celebrated \emph{five-point relative pose problem}.
Work on this problem dates back to Kruppa in 1912~\cite{kruppa1912einige}, whose work implies $\deg (\pi_{\text{5pp}}) \le 22$.
This was sharpened much later by Demazure in 1988~\cite{demazure1988deux}, who showed that $\deg (\pi_{\text{5pp}} ) = 20$.
His proof implicitly relied on a decomposition of the joint camera map, $\pi_{\text{5pp}} = \pi_2 \circ \pi_1$, based on a projectivization $\PP (X^{\text{es}}) \subset \PP^8$ of the essential matrix variety already discussed in~\Cref{ex:ed-essential}.
We take
\begin{align}\label{eq:5pp-decomp}
Y_{\textrm{5pp}} = \{ (E , x_1, \ldots , x_5, &y_1, \ldots ,y_5) \in \PP (X^{\text{es}}) \times \left( \PP^2 \right)^{\times 10} \mid y_i^T E x_i = 0 \},\\
\pi_1 : \left(\PP^3\right)^5 \times \mathcal{C}_{\textrm{5pp}} &\dashrightarrow  Y, \nonumber \\ 
(X_1, \ldots , X_5, P_1 , P_2) &\mapsto \left( E(P_1, P_2), P_1 \cdot X_1, \ldots , P_1 \cdot X_5, P_2 \cdot X_1, \ldots , P_2 \cdot X_5\right), \nonumber
\end{align}
where $E(P_1, P_2)\in \PP (X^{\text{es}})$ is a $3\times 3$ matrix whose $(i,j)$-th entry equals 
\begin{equation}
e_{i,j} = (-1)^{i+j} \det \left[
\begin{array}{c}
    P_1 [ \{ 1, 2, 3\} \setminus \{ i \} , \, : ]\\
    \hline
     P_2 [ \{ 1, 2, 3\} \setminus \{ j \} , \, : ]
\end{array}
\right],
\end{equation}
and $\pi_2 : Y \to \left( \PP^2 \right)^2$ is the coordinate projection.
We have $\deg (\pi_1) =10,$ and $\deg (\pi_2)=2,$ implying that $\Gal (\pi_{\text{5pp}}) $ is a subgroup of the wreath product $S_2 \wr S_{10}.$
In~\cite[\S 4]{gm-vision}, the authors obtained strong evidence that $\Gal (\pi_{\text{5pp}}) \cong S_2 \wr S_{10} \cap A_{20} \cong  S_2^9 \rtimes S_{10}$ using numerical homotopy continuation methods. They also proved that the natural map $\Gal (\pi_{\text{5pp}}) \to S_{10}$ is surjective. Thus,
\begin{equation}\label{eq:bot-essential}
\botn (\pi_{\textrm{5pp}}) = 10.
\end{equation}
This agrees with a result of~\cite{hartley}, implying that general methods for solving five-point relative pose, such as Nist\'{e}r's popular solution~\cite{nister2004efficient}, must invariably resort to computing the roots of a degree-$10$ polynomial.

\begin{figure}[h]
    \centering
    \includegraphics[width=0.9\linewidth]{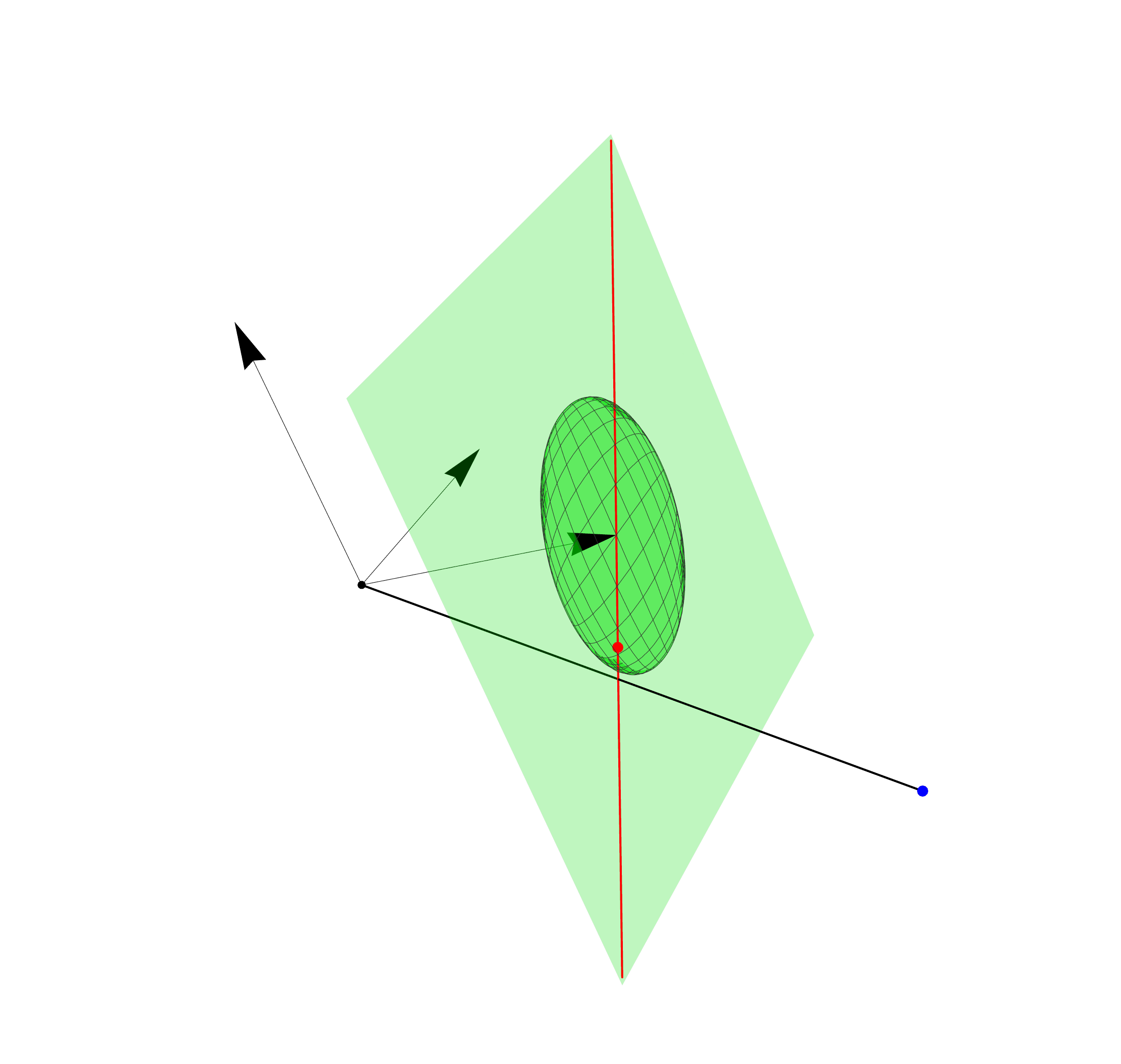}
    \caption{A 3D point (blue) and its 1-dimensional radial projection (red.)}
    \label{fig:radial}
\end{figure}

We conclude with a recent example of a minimal reconstruction problem solved by Hruby et al.~\cite{hruby2023four}, where the gap between the degree of the branched cover and its Galois width is dramatic.
The problem arises from a non-pinhole camera model known as the \emph{radial camera model}~\cite{DBLP:journals/trob/Tsai87,ThirthalaPollefeys,larsson2020calibration,hruby2023four}.
This model addresses the physical phenomenon of lens distortion, a common source of deviations between image measurements and the pinhole model.
\Cref{fig:radial} illustrates a case of ``barrel distortion"; under pinhole projection, straight lines in $\PP^3$ map to straight lines in $\PP^2,$ but distortion causes these lines to bend outward.
To remove the effects of distortion, a number of parametric distortion models have been proposed in the literature.
In contrast, the radial camera model may be seen as a relaxation of the pinhole model, under the assumption that distortion is radially symmetric about some point in the image (the \emph{center of distortion}.)
With this assumption, the straight line through the center of distortion and perspective projection of a 3D point also passes through any point obtained by distorting that projection.
This is the \textcolor{red}{radial line} illustrated in~\Cref{fig:radial}.
The pencil of lines through the center of distortion forms a  $\PP^1,$ and the mapping $\PP^3 \dashrightarrow \PP^1$ from \textcolor{blue}{3D points} to \textcolor{red}{radial lines} is projective-linear, as we now explain.
Consider a pinhole camera $P.$
Without loss of generality, we may assume that the image's center of distortion is the point $e_3 = [0:0:1]\in \PP^2.$ 
For any \textcolor{blue}{3D point} $X\in \PP^3,$ we then have
\begin{equation}
\text{span}_{\PP^2} \left(P \cdot X , e_3\right) =
\text{span} \left( \left[ \begin{array}{c}
     \tilde{P}\\
     0
\end{array} \right] \cdot X , e_3 \right),
\end{equation}
where $\tilde{P} = P[1{:}2,\, :]$ is a $2\times 4$ matrix, representing the \emph{radial camera} $\tilde{P} :\PP^3 \dashrightarrow \PP^1$ associated to $P.$

Given a radial line $\ell \in \PP^1$ and a radial camera $\tilde{P}: \PP^3 \dashrightarrow \PP^1,$ note that the fiber $\tilde{P}^{-1} (\ell)$ is a plane. 
Thus, in order to obtain a nontrivial constraint on a 3D point $X,$ it should be seen by at least $4$ radial cameras $\tilde{P}_1, \ldots , \tilde{P}_4.$
If we assume the original pinhole cameras $P_1, \ldots , P_4$ are calibrated, then
\begin{equation}\label{eq:radial-1}
\tilde{P}_i = \gr{
\begin{bmatrix}
\mathbf{r}_{i1}^T & t_{i,1} \\
\mathbf{r}_{i2}^T & t_{i,2}
\end{bmatrix}
}
\quad 
\text{ where }
\quad 
\mathbf{r}_{i1}^T \mathbf{r}_{i1} =\mathbf{r}_{i2}^T \mathbf{r}_{i2}  =  1,
\, \, 
\gr{\mathbf{r}_{i1}^T}\,\gr{\mathbf{r}_{i2}} = \gr{0}
.
\end{equation}
Up to similarity transformation in $\bl{\RR^3},$ we may assume
\begin{equation}\label{eq:radial-2}
\gr{A_1} = 
 \gr{
\begin{bmatrix}
1 & 0 & 0 & 0\\
0 & 1 & 0 & 0
\end{bmatrix}
},
\quad 
\gr{A_2} = 
 \gr{
\begin{bmatrix}
\mathbf{r}_{21}^T & 0 \\
\mathbf{r}_{22}^T & 1
\end{bmatrix},
}
\end{equation}
and define
\begin{equation}
    \mathcal{C}_{\text{13pp}} = \{ \tilde{P}_1, \ldots , \tilde{P}_4) \in \CC^{2\times 4} \mid \text{\eqref{eq:radial-1} and~\eqref{eq:radial-2} hold} \}.
\end{equation}
Observe that $\dim (\mathcal{C}_{\text{13pp}}) = 13$.
Thus, it is little surprise that the map
\begin{align}
\pi_{\text{rad}} : \left( \PP^3 \right)^{13} \times \mathcal{C}_{\text{13pp}} &\dashrightarrow \left( \PP^1 \right)^{\times 52}\nonumber \\
\quad (X_1, \ldots , X_{13}, \tilde{P}_1,\ldots , \tilde{P}_4) &\mapsto (\tilde{P}_1 \cdot X_1, \ldots , \tilde{P}_4 \cdot X_{13} )
\end{align}
is a branched cover.
What is more surprising is the following.
\begin{theorem}\label{thm:radial}
$\botn (\pi_{\textrm{13pp}}) = 28 < \deg (\pi_{\text{13pp}}) = 3584.$
\end{theorem}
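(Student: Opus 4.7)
The plan is to attack the equality $\botn(\pi_{\textrm{13pp}}) = 28$ by proving matching upper and lower bounds; the strict inequality $28 < 3584$ is then automatic.

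For the upper bound, I would repackage the algorithmic decomposition of Hruby et al.~\cite{hruby2023four} in the branched-cover language of~\Cref{subsec:optimal}. Concretely, the inversion of $\pi_{\textrm{13pp}}$ factors through an intermediate variety $Y$ parametrizing a partial 4-view algebraic object (a radial analogue of the essential matrix/quadrifocal data used in~\Cref{ex:resectioning}), giving $\pi_{\textrm{13pp}} = \pi_2 \circ \pi_1$ with $\pi_1 : Y \dashrightarrow (\PP^1)^{52}$ of degree $28$ and $\pi_2$ of degree $3584/28 = 128 = 2^7$. The second factor $\pi_2$ further decomposes into seven successive degree-$2$ covers: each step resolves a binary ambiguity (camera orientation/sign, partial chirality) by extracting a single square root, after which the $13$ world points are recovered by a degree-$1$ linear triangulation from the now-known cameras. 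Applying~\Cref{prop:decomp-width} recursively across this tower yields
\[
\botn(\pi_{\textrm{13pp}}) \le \max\bigl(\botn(\pi_1),\, \underbrace{2, \ldots, 2}_{7}\bigr) = \max(\botn(\pi_1),\, 2),
\]
so it suffices to show $\botn(\pi_1) \le 28$, which follows from the obvious bound $\botn(\pi_1) \le \deg(\pi_1) = 28$.

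For the lower bound I would invoke the monodromy principle from~\Cref{cor:hit}. Pick a rational instance $\tilde{p} \in (\PP^1(\QQ))^{52}$ outside the thin set $\Delta$ of~\Cref{thm:hit}, and compute a subgroup $H \le \Gal(\pi_{\textrm{13pp}})$ by numerical monodromy~\cite{DUFF-MONODROMY,jir-galois}. The image $\overline{H}$ of $H$ in the monodromy action on the degree-$28$ fiber of $\pi_1$ is what needs to be shown large. Heuristically (and supported by the numerics of~\cite{hruby2023four}), this image is the full symmetric group $S_{28}$: one would certify this by producing loops whose lifts yield a transposition together with an appropriate $k$-cycle, then invoke~\Cref{lem:stabilizer-better} on a suitable point-stabilizer to conclude full-symmetry. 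Once this is done, monotonicity (\Cref{thm:monotonicity}), the splitting formula~\Cref{prop:bottleneck-normal-recursion}, and~\Cref{thm:compute-bot} combine to give
\[
\botn(\pi_{\textrm{13pp}}) \ge \botn(\overline{H}) \ge \botn(S_{28}) = 28.
\]

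The main obstacle, and the one deserving most of the attention in a full write-up, is the lower bound: specifically, rigorously certifying that the monodromy action on the degree-$28$ subcover is full-symmetric rather than some primitive proper subgroup of $S_{28}$. Numerical monodromy alone gives only heuristic evidence. The cleanest rigorous route is to pick an integer specialization $\tilde{p} \in \ZZ^{52}$, compute Frobenius cycle types modulo several small primes (as in~\Cref{ex:ml-linear}), and exhibit a transposition together with a generator forcing transitivity and primitivity; alternatively one may use certified path-tracking~\cite{guillemot,lee-homotopy}. Either route reduces the theorem to a finite, albeit intricate, verification.
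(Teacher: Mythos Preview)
Your overall architecture---upper bound via decomposition, lower bound by showing the degree-$28$ factor has full-symmetric monodromy---matches the paper's, but the details differ in two places.

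For the upper bound, the decomposition actually recorded from~\cite{hruby2023four} is $\pi_{\text{13pp}} = \pi_4 \circ \pi_3 \circ \pi_2 \circ \pi_1$ with degrees $(16,4,2,28)$, the degree-$28$ piece being the \emph{bottom} map $Y_{\text{13pp}} \to (\PP^1)^{52}$. Your claim that the degree-$128$ part splits into seven successive degree-$2$ covers is not established there and would require the relevant Galois group to be a $2$-group; fortunately $\max(16,4,2,28)=28$, so the coarser decomposition already suffices and your conclusion survives. (A notational slip: with $\pi_1 : Y \dashrightarrow (\PP^1)^{52}$ as the bottom map, the composition should read $\pi_1 \circ \pi_2$, not $\pi_2 \circ \pi_1$.)

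For the lower bound, the paper takes a genuinely different route that sidesteps exactly the obstacle you flag. Rather than certifying $S_{28}$ via numerical monodromy loops or Frobenius cycle types, it identifies the degree-$28$ subcover with \emph{linear slicing} of a specific projective variety: the image $X^{\text{quad}} \subset \PP(\CC^{2\times 2\times 2\times 2}) \cong \PP^{15}$ of the $4\times 4$ principal minor map, which has dimension $13$ and degree $28$. Each of the $13$ radial-line correspondences imposes a linear condition on this quadrifocal tensor, producing a rational map $(\PP^1)^{52} \dashrightarrow \mathbb{G}_{13,15}$; a local Jacobian computation shows it is dominant, whence $\Gal(\pi_4)$ coincides with the sectional monodromy of $X^{\text{quad}}$, which is $S_{28}$ by the classical uniform position theorem~\cite{arbarello}. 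Your computational route would also work in principle, but it trades a one-line appeal to a standard theorem for the ``intricate finite verification'' you anticipate; the paper's argument is both more rigorous as written and more informative about \emph{why} the monodromy is full-symmetric.
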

\begin{proof}
In~\cite{hruby2023four}, it was shown that  $\pi_{\text{13pp}}$ has a decomposition of the form
\[
\pi_{\text{13pp}} = \pi_4 \circ \pi_3 \circ \pi_2 \circ \pi_1 ,
\]
where $\deg (\pi_1) = 16,$ $\deg (\pi_2) = 4$, $\deg (\pi_3)=2,$ and $\deg (\pi_1) = 28.$
Now, applying~\Cref{prop:decomp-width} and~\Cref{thm:monotonicity}, we have that
\begin{equation}
\botn (\pi_4) = 28 
\quad 
\Rightarrow 
\quad 
\botn (\pi_{\text{13pp}} ) = 28.
\end{equation}
To see that $\botn (\pi_4) = 28,$ consider the branched cover $\pi' = \pi_3 \circ \pi_2 \circ \pi_1$.
We recall, also from~\cite{hruby2023four}, a description of this map analogous to~\eqref{eq:5pp-decomp};
\begin{align}\label{eq:5pp-decomp}
\pi ' &: \left(\PP^3\right)^5 \times \mathcal{C}_{\textrm{5pp}} \dashrightarrow  Y_{\text{13pp}},\\
Y_{\textrm{13pp}} &= \{ (T , \ell_{1,1}, \ldots , \ell_{13, 4}) \in X^{\text{quad}} \times \left( \PP^1 \right)^{\times 52} \mid T( \ell_{i,1}, \ldots , \ell_{i,4} ) = 0 \}, \nonumber 
\end{align}
where $X^{\text{quad}} \subset \PP (\CC^{2\times 2 \times 2 \times 2})$ is the projective variety of binary tensors $T$ whose entries are given by
\begin{equation}
T_{i,j,k,l} = (-1)^{i+j+k+l} \det \left[
\begin{array}{c}
    \tilde{P}_1 [ \{ 1, 2\} \setminus \{ i \} , \, : ]\\
    \hline
     \tilde{P}_2 [ \{ 1, 2\} \setminus \{ j \} ,\,  : ]\\
     \hline 
         \tilde{P}_3 [ \{ 1, 2\} \setminus \{ k \} , \, : ]\\
    \hline
     \tilde{P}_4 [ \{ 1, 2\} \setminus \{ l \} , \, : ]
\end{array}
\right].
\end{equation}
This is a $13$-dimensional variety of degree $28$, which happens to coincide with the image of the $4\times 4$ principal minor map studied in~\cite{st-lin}.
The map $\pi_4 : Y_{\text{13pp}} \to \left( \PP^1 \right)^{\times 52}$ is given by coordinate projection.
Each $4$-tuple $T(\ell_{i,1}, \ldots , \ell_{i,4})$ gives rise to a linear form in the entries of the tensor $T.$
Considering all such linear forms, we obtain a rational map $\left( \PP^1 \right)^{52} \dashrightarrow \mathbb{G}_{13,15}$.
A computatation in local coordinates shows that this map is dominant.
As such, we may consider the monodromy action of the $\mathbb{G}_{13,15}$ on points in $Y_{\text{13pp}}$ obtained by slicing, which is well-known to be $S_{28}$~\cite[pp.~11-12]{arbarello}.
Thus, by~\Cref{cor:hit}, a generic instance of the $13$-point problem with rational coordinates has Galois group $S_{28}$.
Since $\Gal (\pi_4)\unlhd S_{28}$, we deduce that $\botn (\pi_4) = 28$ as well.
\end{proof}
The decomposition used in the proof of~\Cref{thm:radial} was also used in~\cite{hruby2023four} to devise an efficient homotopy continuation solver for the minimal radial camera reconstruction problem.
At the time of writing, a practical symbolic solution to this problem is still unknown.
Naturally, tracking 28 representative solution paths is more efficient than tracking all 3584.

This final example embodies the main takeaway from this paper: \\
\emph{Don't give up right away on solving a ``hard" system of algebraic equations; first, determine the problem's Galois width, then, plan accordingly.}
\end{example}

\section*{Acknowledgements}

I wish to thank the following individuals for helpful conversations and comments during the preparation of this work: Jon Hauenstein, Kathl\'{e}n Kohn, Kisun Lee, Julia Lindberg, and Jose Israel Rodriguez.

\bibliographystyle{plain}
\bibliography{refs}

\end{document}